\documentclass[11pt]{article}

\usepackage{titlesec}

\titleformat{\section}
  {\normalfont\large\bfseries}{\thesection}{1em}{}

\titleformat{\subsection}
  {\normalfont\normalsize\itshape}{\thesubsection}{1em}{}

\titleformat{\subsubsection}
  {\normalfont\normalsize\itshape}{\thesubsubsection}{1em}{}
  
\usepackage{graphicx}
\usepackage{float}
\usepackage{booktabs,array}
\usepackage{amsfonts,amsmath,amssymb}
\usepackage{natbib}
\usepackage[colorlinks=true,linkcolor=blue,citecolor=blue,urlcolor=blue]{hyperref}
\usepackage{setspace}
\usepackage[font=small,labelfont=bf]{caption}
\usepackage{etoolbox}
\usepackage{adjustbox}
\newcolumntype{L}[1]{>{\raggedright\arraybackslash}p{#1}}

\DeclareMathOperator{\E}{\mathbb{E}}

\usepackage{tikz}
\usetikzlibrary{arrows.meta,calc,positioning}
\makeatletter

\@ifundefined{@combinedblfloats}{}{%
  \patchcmd\@combinedblfloats{\box\@outputbox}{\unvbox\@outputbox}{}{}%
}%
\makeatother

\newif\iflatexml\latexmlfalse

\AtBeginDocument{\DeclareGraphicsExtensions{.pdf,.PDF,.eps,.EPS,.png,.PNG,.tif,.TIF,.jpg,.JPG,.jpeg,.JPEG}}

\usepackage[utf8]{inputenc}
\usepackage[english]{babel}

\usepackage[letterpaper,left=1in,right=1in,top=1in,bottom=1in]{geometry}

\usepackage{amsthm}
\newtheorem{proposition}{Proposition}

\newtheorem{theorem}{Theorem}

\newtheorem{remark}{Remark}

\newtheoremstyle{upright}
  {6pt}   
  {6pt}   
  {\normalfont}  
  {}      
  {\bfseries} 
  {.}     
  { }     
  {}
\theoremstyle{upright}

\newtheorem{exmp}{Example}[section]
\newtheorem{exmpm}{Example}

\newcounter{fstmt}
\renewcommand{\thefstmt}{F\arabic{fstmt}}

\newenvironment{fstatement}[1][]%
{%
  \refstepcounter{fstmt}%
  \par\medskip\noindent
  \textbf{(\thefstmt) #1}\@\enspace\ignorespaces
}%
{%
  \par\medskip
}

\iflatexml

\else
\fi

\title{One Probability, Two Roles: The Separation of Coherence and Frequency in Adaptive Regimes}

\author{Yonggang Lu\thanks{Contact email: \href{mailto:yonggang.t.lu@gmail.com}{\texttt{yonggang.t.lu@gmail.com}}}\\University of Maine}

\date{September 1, 2026}

\hypersetup{
  pdftitle={One Probability, Two Roles: The Separation of Coherence and Frequency in Adaptive Regimes},
  pdfauthor={Yonggang Lu},
  pdfsubject={Probabilistic coherence and regime-conditional frequency guarantees},
  pdfkeywords={probabilistic coherence, frequency interpretation, adaptive regimes, reference structures, stabilization, calibration, exchangeability}
}

\begin{document}

\maketitle

\begin{center}
\small\textit{This preprint is a work in progress and may contain errors. Comments and suggestions are welcome.}
\end{center}
\medskip
\selectlanguage{english}

\begin{abstract}
\noindent Probability plays two distinct roles in modern data-analytic practice: (i) as an internally coherent, filtration-relative language for sequential forecasting and, together with a stated loss or utility, decision making; and (ii) as a foundation for empirical claims such as stabilization, calibration, and repeated-sampling validity. The first role is relative to an assessment law $\mathbb Q$, whereas the second is evaluated under a governing law $\mathbb P$ and a declared repetition regime. In classical settings the roles are often aligned by conditional correctness under $\mathbb P$ together with i.i.d.\ or other stable-law assumptions under a fixed design, or by exchangeability. In adaptive and AI-mediated settings they can diverge because forecasts operate within feedback loops while reliability is evaluated under a potentially different law and regime. We synthesize relevant results from prequential forecasting, calibration, martingale and game-theoretic validity, adaptive data analysis, conformal prediction, and long-context AI reliability into a formal account of \emph{role separation}. The resulting \emph{Role Separation Principle} distinguishes internal coherence from conditional-mean correctness under the evaluation law and from target-specific reference and stability conditions. Two separation results show that a proper coarsening of the limiting-frequency distribution need not identify the governing law, and that even conditionally correct forecasts under that law need not accompany stabilization. We then develop a unified stabilization framework spanning filtration-based, shift-based, policy--environment, and design-based regimes and operationalize it through a regime-conditional audit checklist. Applied illustrations clarify calibration claims, synthetic-data validity, and structural mismatches relevant to some high-confidence unsupported AI outputs.

\medskip
\noindent \textbf{Keywords} --- Probabilistic coherence, Frequency interpretation, Adaptive regimes, Reference structures, Stabilization, Calibration, Exchangeability, AI-mediated systems
\end{abstract}%

\section{Introduction}
\label{sec1-intro}

In many contemporary deployments, particularly in reinforcement learning, adaptive experimentation, and generative AI, data are produced within closed-loop systems in which prediction and action shape future observations. Recommendation and ranking algorithms alter information exposure \citep{Bottou2013,ChaneyStewartEngelhardt2018}; adaptive clinical trials can update assignment rules as outcomes accrue \citep{RobertsonEtAl2023}; and reinforcement-learning agents select actions to influence subsequent trajectories \citep{SuttonBarto2018}. In such regimes, the ``data-generating mechanism'' is not a fixed background object but part of the deployed system, creating feedback between prediction, action, and observation.

This closed-loop mode of data generation complicates a familiar but often implicit identification between internal probabilistic consistency and external reliability. In static textbook settings, strong invariance assumptions, most notably i.i.d.\ sampling from a stable population, align the meaning of probabilistic statements with their validation through repetition. In adaptive regimes, where policies evolve, environments respond, and distributions drift endogenously, this alignment can fail. As a result, it becomes unclear which inferential meanings remain stable and which require explicit assumptions about repetition and structural invariance.

One way to sharpen the issue is to recognize that probability is routinely asked to play two distinct roles in contemporary data-analytic practice. In the coherence tradition, probability is an interpretive and normative device that supports coherent belief updating and, together with a stated loss or utility, decision making via conditioning. While coherence is often motivated in finitely additive terms \citep{Savage1954,deFinetti1974}, throughout we work in standard measure-theoretic probability, under which assessments are represented by a countably additive probability measure. In the frequency tradition, probability supports interpretations grounded in repetition and invariance, such as long-run relative frequencies, coverage, and error control \citep{vonMises1928,Neyman1937,NeymanPearson1933}. Contemporary workflows rely on both: models are deployed for real-time forecasting and decision support, while predictive calibration is evaluated empirically alongside task-specific performance \citep{Guo2017}.

Adaptive regimes expose a structural fault line between these roles. Internal coherence is defined relative to an information filtration and an assessment law, whereas empirical reliability is evaluated under a governing path law and presupposes a specified repetition regime. A system may therefore be internally coherent while factual accuracy, downstream inferential validity, or deployment reward is evaluated under a different law. This mismatch is relevant to reliability failures in generative AI (often termed ``hallucinations'') \citep{Ji2023}. In synthetic-data settings, the corresponding issue is transfer from the generator law to the real-data law, which requires assumptions not supplied merely by coherent sampling from the generator \citep{Rubin1993,Reiter2005,Drechsler2011}. Recent empirical work supplies complementary motivation. Models may fail to use information robustly across positions in a long context \citep{Liu2024}, degrade over extended conversations \citep{Laban2025}, and exhibit turn-wise KL divergence between a test model and a goal-consistent reference model \citep{DongreEtAl2025}. These findings motivate, but do not replace, the regime-level question addressed here:
\begin{quote}
\textit{What assumptions are needed before locally meaningful forecasts support stable frequency interpretations over long interactions}?
\end{quote}

Against this backdrop, rather than proposing another calibration algorithm, sequential test, conformal construction, or empirical benchmark for model drift, we develop a regime-conditional mathematical framework that explains why locally coherent probabilistic behavior may nevertheless fail to support long-run frequency reliability in adaptive regimes and why stabilization must be tied to an explicit reference structure. Conceptually, we separate coherence from frequency meaning and emphasize that claims of probabilistic reliability require structural assumptions specifying what is held fixed under repetition. Structurally, we formalize this separation as a \emph{Role Separation Principle} and develop a unified stabilization framework that connects the canonical exchangeable case to shift-invariant, policy--environment, and design-based regimes. Practically, we translate these distinctions into a regime-conditional audit framework that supports transparent and structurally grounded interpretation of probabilistic guarantees in AI-mediated systems. Throughout, our aim is semantic clarification and principled inference under declared regimes rather than the proposal of new testing procedures or calibration algorithms.

This novelty is deliberately synthetic. The two separation facts we emphasize have classical roots in exchangeability, ergodic theory, and martingale methods, and closely related literatures already supply powerful tools for sequential forecasting, calibration, testing, and adaptive validity. What is missing, and what this paper supplies, is a unified reference-structure account of when filtration-level probabilistic coherence can be interpreted as a frequency-level guarantee, and when it cannot. We summarize the central message as the following Role Separation Principle (henceforth, the Principle):
\begin{quote}
\textit{Internal coherence makes probability meaningful as a filtration-relative conditional assessment under its assessment law. Empirical guarantees additionally require conditional correctness under the governing evaluation law, when such correctness is invoked, and target-specific assumptions specifying repetition and stability.}
\end{quote}
This principle is structural rather than philosophical. It clarifies what coherence alone can guarantee in adaptive settings and what additional assumptions are needed before probabilistic statements support stabilization, calibration, or repeated-sampling error guarantees. Our approach is synthetic in that it organizes standard results from conditional expectation, martingale theory, exchangeability, and ergodic stabilization into a unified diagnostic framework.

At the technical level, any deployed forecasting procedure induces a process $(\mathcal F_t,p_t,X_{t+1})_{t\ge0}$, where $\mathcal F_t$ is the accumulated information, $p_t$ is an $\mathcal F_t$-measurable assessment, and $0\le X_{t+1}\le1$ is the evaluated outcome. Let $\mathbb Q$ denote an assessment law and $\mathbb P$ the governing evaluation law. Internal coherence under $\mathbb Q$ means $p_t=\mathbb E_{\mathbb Q}[X_{t+1}\mid\mathcal F_t]$. The empirical martingale conclusions in this paper hold under $\mathbb P$ only when the stronger deployment condition $p_t=\mathbb E_{\mathbb P}[X_{t+1}\mid\mathcal F_t]$ holds $\mathbb P$-almost surely. We call the latter \emph{conditional-mean correctness under the evaluation law}. It implies that $(X_{t+1}-p_t)$ is a bounded $\mathbb P$-martingale-difference sequence without requiring independence, stationarity, or exchangeability. We use exact conditional correctness as a benchmark; we do not assume that a particular AI training procedure enforces it.

Under evaluation-law conditional correctness, a martingale strong law implies that average forecast error converges to zero. This is the baseline unweighted martingale conclusion. It does not imply stabilization of empirical averages or beliefs, guarantee visitation of forecast bins, or provide uniform calibration over rich checking classes. For a fixed predictable Borel bin, however, the same martingale structure does yield a calibration gap converging to zero on the event of infinite visitation. This distinction matters in adaptive regimes, where ``calibration'' is easily interpreted more broadly than the available theory supports \citep{Dawid1982,FosterVohra1998}. Dawid relates coherent Bayesian forecasting to calibration within a specified sequential protocol, while Foster and Vohra obtain online calibration through an explicit forecasting game. Both support treating calibration as a property of a declared forecasting--outcome regime.

Frequency meanings, however, require more than coherence. They require a repetition mechanism: an invariance or controlled evolution that renders long-run averages comparable. We refer to the corresponding regime-level assumptions as the \emph{reference structure}. Canonical examples include permutation invariance (exchangeability), shift invariance (stationarity and ergodicity), Markovian invariance under a fixed policy--environment pair, and design-based reference structures in adaptive experimentation.

Exchangeability provides the clearest alignment case. Under the governing exchangeable law, conditionally correct one-step forecasts and empirical frequencies converge to the same latent quantity. Beyond such settings, we establish two complementary separation results: (i) a proper coarsening of the limiting-frequency distribution need not identify the governing law, although the full limiting-frequency distribution does identify a binary exchangeable law and a single realized limit supplies only one observation from that distribution; and (ii) evaluation-law conditional correctness does not ensure almost sure stabilization of empirical averages. To characterize when stabilization does occur, we introduce a \emph{reference--drift decomposition} that writes empirical averages as a predictable drift term plus a martingale innovation term. Stabilization then reduces to verifying drift convergence under the stated regime assumptions.

The perspective developed here is closely related to prequential and sequential forecasting traditions, which evaluate probabilistic models through realized predictive performance and emphasize calibration, martingale behavior, and testing procedures for pathwise validity \citep{Dawid1982,Dawid1984,Dawid2006,FosterVohra1998}. It is also related to game-theoretic probability, in which validity is expressed through betting protocols and supermartingale capital processes without classical independence assumptions \citep{ShaferVovk2001}. These literatures provide the sequential and martingale tools on which our formulation builds.

Our focus is complementary but distinct. Where prequential and game-theoretic approaches emphasize scoring, betting, and pathwise testing, we emphasize the semantic separation between \textit{filtration-level} coherence and \textit{regime-level} frequency interpretation and make explicit both the required assessment--evaluation law relation and the invariance or controlled-evolution assumptions. This distinction is especially salient when the repetition mechanism underlying frequency claims is implicit, evolving, strategically altered, or mediated by an AI system whose effective context may differ from the context users attribute to it.

For adaptive and AI-mediated regimes, the Principle has a concrete methodological message. Probabilistic claims should report (i) the assessment and evaluation laws when they differ, (ii) the filtration and target event, and (iii) the reference structure under which the intended frequency meaning is asserted. Misalignment among these elements offers diagnostic hypotheses for some high-confidence unsupported outputs and synthetic-data transfer failures, but the formal results do not establish a universal empirical cause or remedy. We therefore translate the theory into auditable criteria for reliability claims under feedback and drift.

The remainder of the paper proceeds as follows. \autoref{sec2-map} introduces a conceptual map distinguishing coherence, multiple frequency meanings, and reference structures in adaptive regimes. \autoref{sec3-coherence-forecast} formalizes the coherence layer and its interpretation as filtration-relative conditional forecasting and decision making. \autoref{sec4-exchangeability} presents exchangeability as a canonical symmetry-based reference structure under which governing-law conditionally correct forecasts and frequency align through a latent representation. \autoref{sec5-separations} establishes two separation results that clarify the limits of frequency identification and stabilization under conditional correctness alone. \autoref{sec6-beyond-exch} develops a unified stabilization framework beyond exchangeability, organizing shift-based, policy--environment, and design-based regimes via the reference--drift decomposition. \autoref{sec7-guidance} provides interpretive guidance for adaptive and AI-mediated settings, including high-confidence errors often described as hallucinations, calibration, and synthetic data, and develops regime-conditional audit criteria for articulating probabilistic guarantees. \autoref{sec8-conclusion} concludes with a synthesis of the main findings and implications for probabilistic reasoning in modern adaptive environments.

\section{A Conceptual Map of Coherence, Frequency, and Reference Structures}
\label{sec2-map}

We begin by summarizing the Principle introduced in \autoref{sec1-intro} at a conceptual level, as illustrated in \autoref{fig:sep-map}. 
\begin{figure}[htb]
\centering
\begin{tikzpicture}[
    scale=0.76, transform shape,
    box/.style={rectangle, draw, rounded corners, align=center, text width=4.25cm, minimum height=0.9cm, font=\small},
    dashedbox/.style={rectangle, draw, dashed, rounded corners, align=center, text width=4.25cm, minimum height=0.9cm, font=\small},
    arrow/.style={-Latex, thick},
    dashedarrow/.style={-Latex, thick, dashed},
    node distance=0.75cm and 1.25cm
]
\node[box] (assessment) {Assessment law $\mathbb Q$ and filtration $\mathcal F_{t-1}$};
\node[box, below=of assessment] (coherence) {Internal coherence\\ $p_{t-1}=\mathbb E_{\mathbb Q}[X_t\mid\mathcal F_{t-1}]$};
\node[dashedbox, below=of coherence] (correctness) {Evaluation-law correctness\\ $p_{t-1}=\mathbb E_{\mathbb P}[X_t\mid\mathcal F_{t-1}]$};
\node[box, below=of correctness] (validity) {$\mathbb P$-martingale validity\\ $n^{-1}\sum_{t=1}^n(X_t-p_{t-1})\to0$};

\draw[arrow] (assessment) -- (coherence);
\draw[dashedarrow] (coherence) -- node[midway, right, font=\scriptsize, align=left] {law match or\\external verification} (correctness);
\draw[arrow] (correctness) -- (validity);

\node[dashedbox, right=of correctness] (reference) {Target-specific reference and stability conditions};
\node[box, below=of reference] (frequency) {Specified frequency guarantee\\ \footnotesize stabilization, calibration, or error control};

\draw[arrow] (reference) -- (frequency);
\draw[dashedarrow] (validity.east) -- (frequency.west);
\end{tikzpicture}

\vspace{-5pt}
\caption{\footnotesize Internal coherence is relative to an assessment law $\mathbb Q$. Martingale validity under the governing evaluation law $\mathbb P$ requires conditional-mean correctness under $\mathbb P$. A specified frequency guarantee additionally requires target-specific reference and stability conditions.}
\label{fig:sep-map}
\end{figure}

This section makes these underlying concepts precise by organizing them into a three-layer conceptual map. The first layer concerns filtration-level probabilistic meaning, interpreting coherence as conditional forecasting. The second distinguishes several frequency notions used to assess long-run reliability. The third introduces reference structures, defined by explicit invariance or controlled-evolution assumptions, that connect governing-law probabilistic statements to frequency interpretation once any required law relation has been stated. This framework serves a diagnostic role by clarifying which claims follow from coherence alone and which require evaluation-law correctness and an explicit repetition mechanism.

\subsection{Minimal probabilistic objects in adaptive regimes}
\label{sec2.1-objects}

We model an adaptive deployment as a sequentially observed process together with an information structure.
Let $(Z_t)_{t\ge1}$ denote the observed stream, where $Z_t$ may include contexts, actions, and outcomes (e.g.,
$Z_t=(C_t,A_t,X_t)$). Let
\[
\mathcal{F}_t := \sigma(Z_1,\ldots,Z_t)
\]
be the filtration representing the information available at time $t$. In many applications, actions are
chosen adaptively in the sense that $A_{t+1}$ is $\mathcal{F}_t$-measurable (or randomized conditional on
$\mathcal{F}_t$), and outcomes are then realized given the chosen action and the evolving environment.

A central object for frequency-based evaluation is a long-run empirical average of a bounded summary
statistic. For a bounded outcome sequence $(X_t)_{t\ge1}$, define
\begin{equation}
\label{eq:emp-mean}
\bar X_n := \frac{1}{n}\sum_{t=1}^n X_t,
\end{equation}
and consider the event that a long-run average exists,
\[
D := \left\{\lim_{n\to\infty}\bar X_n\ \text{exists}\right\}.
\]
In classical i.i.d.\ settings, it is routine to assert $\mathbb{P}(D)=1$ and interpret the limit as an ``underlying
probability.'' In adaptive regimes, the meaning of such an assertion depends on what is treated as fixed under
repetition (e.g., policy, environment, design, stopping rule) and what is permitted to evolve (e.g., drift induced by
feedback). These regime-level choices are precisely what we formalize as reference structures in
\autoref{sec2.4-reference-structures}.

\subsection{Coherence as filtration-level probabilistic meaning}
\label{sec2.2-coherence}

Coherence is an internal consistency requirement for probabilistic reasoning. It rules out sure-loss (Dutch-book) vulnerabilities and motivates representing a collection of assessments on a common event algebra by a probability measure \citep{Savage1954,deFinetti1975}. While coherence is often discussed in finitely additive terms, throughout we use countably additive probability. Let $\mathbb Q$ denote the assessment law used to form conditional prices and let $\mathbb P$ denote the governing law under which empirical performance is evaluated. These laws may coincide, but coherence alone does not assert that they do. A forecast issued at time $t$ is $\mathcal F_t$-measurable, where $\mathcal F_t$ records the declared information available at issuance.

A coherent assessment under $\mathbb Q$ obeys
\begin{equation}
\label{eq:coh-identity}
p_t \;=\; \mathbb{E}_{\mathbb Q}[X_{t+1}\mid \mathcal{F}_t].
\end{equation}
Thus $(X_{t+1}-p_t)$ is a martingale-difference sequence under $\mathbb Q$. To obtain the analogous empirical statement under $\mathbb P$, one must separately have
\begin{equation}
\label{eq:eval-correctness}
p_t=\mathbb E_{\mathbb P}[X_{t+1}\mid\mathcal F_t]\qquad \mathbb P\text{-a.s.}
\end{equation}
We call \eqref{eq:eval-correctness} conditional-mean correctness under the evaluation law. Neither identity requires independence, stationarity, or exchangeability, and neither is assumed to hold automatically for a deployed AI system.

Internal coherence provides an interpretation of probability as conditional assessment and, with a loss or utility, decision support. It does not establish evaluation-law correctness, specify repetition, or ensure that empirical averages stabilize.

\subsection{What ``frequency'' means in practice}
\label{sec2.3-frequency}

The term ``frequency'' is used for several distinct targets. In textbook i.i.d.\ settings these notions are often studied under a common repetition scheme, but they remain logically distinct. In adaptive regimes the distinctions become especially visible unless the repetition mechanism is stated explicitly.

\begin{fstatement}[Pathwise stabilization of empirical averages.]
\label{f:F1}
A long-run frequency exists along a realized trajectory:
\[
\bar X_n \to L \quad \text{almost surely},
\]
for some random variable $L$ (possibly constant). This is a property of a stochastic process under a specified law.
Crucially, even evaluation-law conditional correctness does not imply \hyperref[f:F1]{(\ref*{f:F1})}; later we exhibit a non-degenerate governing law with conditionally correct forecasts for which $\mathbb P(D)=0$.
\end{fstatement}

\begin{fstatement}[Repeated-sampling guarantees for procedures.]
\label{f:F2}
Coverage and type-I-error guarantees concern the behavior of a procedure across replications of a declared mechanism; time-uniform guarantees concern a process under a fixed null and reference law \citep{Ville1939,HowardRamdasMcAuliffeSekhon2020}. In adaptive settings one possible replication scheme reruns the full policy or design, including its adaptivity and stopping rule, while other schemes may condition on parts of the realized history. The chosen scheme must be stated. Regret bounds form a separate class and may be deterministic, pathwise, or expectation-based rather than repeated-sampling claims.
\end{fstatement}

\begin{fstatement}[Calibration of probabilistic forecasts.]
\label{f:F3}
Calibration compares predicted probabilities to empirical frequencies within an appropriate reference class; events forecast near $p$ should occur at empirical rate near $p$ in the long run \citep{Dawid1982,GneitingRaftery2007}.
In adaptive regimes, calibration is not a purely internal property of a forecasting rule. It is a frequency property
of the joint system generating forecasts and outcomes, and thus depends on what is held fixed (information used by the
forecaster, policy, environment stability, and evaluation protocol).
Such statements are inherently relative to a declared checking class and, in adaptive settings, typically require sufficient visitation of the corresponding conditioning events to admit a well-defined long-run interpretation. Dawid's calibration framework makes this reference-class dependence explicit by comparing announced probabilities to realized frequencies along a forecasting sequence \citep{Dawid1982}, while Foster and Vohra show that achieving asymptotic calibration against arbitrary sequences requires an explicit online forecasting protocol, including randomization in the adversarial setting \citep{FosterVohra1998}. In the terminology of this paper, both results treat calibration as a property of a joint forecasting--outcome regime, not as a purely internal property of an isolated probability assignment.
\end{fstatement}

These notions are distinct frequency-oriented meanings or targets. Pathwise stabilization \hyperref[f:F1]{(\ref*{f:F1})} concerns almost-sure
convergence of empirical averages under a fixed law. Procedure-level guarantees \hyperref[f:F2]{(\ref*{f:F2})} are
repeated-sampling or time-uniform statements about an entire adaptive mechanism, often proved via nonnegative
supermartingales and maximal inequalities. Calibration \hyperref[f:F3]{(\ref*{f:F3})} is a prequential reliability
criterion that compares forecasts to realized frequencies along selected subsequences. Internal coherence supplies filtration-level meaning under $\mathbb Q$. Evaluation-law correctness supplies baseline martingale conclusions under $\mathbb P$, including fixed-bin calibration conditional on infinite visitation, but does not by itself supply unconditional stabilization, rich-class calibration, or procedure-level error control. Each frequency claim therefore requires a declared repetition mechanism and the target-specific invariance or drift control that underwrites it.

\subsection{Reference structures and the Principle}
\label{sec2.4-reference-structures}

A \emph{reference structure} is a regime-level specification comprising (i) what constitutes repetition and what is held fixed, including any reference conditioning information, and (ii) the law- or protocol-level invariance, ergodicity, randomization, or controlled-drift assumptions that justify the stated target.
This encompasses exact symmetries (e.g., permutation invariance) and controlled deviations from symmetry (e.g.,
bounded or convergent drift).

In adaptive deployments, it is often helpful to read a reference structure as a \emph{protocol statement}. It declares
which parts of the environment or data generator are treated as fixed across replications, what the forecaster or policy
is allowed to adapt to, whether the procedure is randomized, and which filtration is taken as the information history.
Stated this way, the reference structure becomes operational and auditable, linking the conceptual map to the language of
online learning and anytime-valid inference.

A compact summary of the Principle is:
\begin{equation}
\label{eq:sep-summary}
\begin{aligned}
&\underbrace{\textbf{Conditional-mean correctness}}_{\text{evaluation law}}
\;+\;
\underbrace{\textbf{Target-specific stability}}_{\text{declared repetition regime}}
\\[-0.2em]
&\hspace{3.1cm}\Longrightarrow
\underbrace{\textbf{Specified frequency guarantee}}_{\text{stabilization, calibration, or error control}}.
\end{aligned}
\end{equation}

Internal coherence under $\mathbb Q$ remains the interpretation layer preceding this implication. When $\mathbb Q=\mathbb P$ for the forecast target, coherence also supplies the displayed evaluation-law premise. The remainder of the paper studies the limitations and constructive uses of this implication. Exchangeability yields a transparent alignment between conditionally correct forecasts and empirical frequencies via de Finetti's representation \citep{deFinetti1937,Aldous1985}; beyond it, stabilization reduces to convergence of an appropriate reference drift.

\autoref{tab:ref-structures} in \hyperref[appendix-D1]{Appendix D.1} lists representative reference structures and the frequency
meanings they support. The point is not that any single structure is universally appropriate, but that frequency
claims are \emph{regime-relative}, as they must be interpreted relative to what is held fixed and what stability is
assumed.

The subsequent sections develop this framework. \autoref{sec3-coherence-forecast} formalizes the coherence layer and
its decision-theoretic interpretation. \autoref{sec4-exchangeability} develops exchangeability as a canonical
reference structure that aligns governing-law conditionally correct forecasts and frequency. \autoref{sec5-separations} establishes the two separation results.
\autoref{sec6-beyond-exch} then develops a unified stabilization framework in which reference structures enter through
a drift--innovation decomposition, and \autoref{sec7-guidance} translates the resulting distinctions into guidance for
adaptive and AI-mediated deployments.

\section{Coherence and Probability as Conditional Forecasting and Decision Support}
\label{sec3-coherence-forecast}

\autoref{sec2-map} distinguished internal coherence under an assessment law from empirical frequency meanings under a governing evaluation law. Once an information structure and assessment law are fixed, conditional probability supports forecasting and, together with a stated loss or utility, decision making. Coherence remains well defined when stationarity or exchangeability fails, but it neither verifies the assessment law against deployment nor supplies a repetition mechanism.

Although coherence can be formulated under finite additivity, the martingale arguments and stabilization results developed later in the paper are most naturally stated under countable additivity. We therefore present the coherence layer in the standard measure-theoretic framework.

\subsection{Assessments, conditioning, and coherent updating}
\label{sec3.1-prices}

We interpret an integrable random variable $Y\in L^1(\mathbb Q)$ as a contingent payoff. Work on a filtered assessment space $(\Omega,\mathcal F,(\mathcal F_t)_{t\ge0},\mathbb Q)$, where, as in \autoref{sec2.1-objects}, $\mathcal F_t$ represents the information available when an assessment is issued. In adaptive regimes this information may depend on past observations and interventions.

A filtration-level probabilistic assessment assigns to each such payoff a valuation that is $\mathcal F_t$-measurable and interpreted as a forecast or decision-relevant price conditional on $\mathcal F_t$. 
This interpretation is purely filtration-relative and does not presuppose independence, stationarity, or any regime-level invariance of the realized data stream. 
Under countable additivity, the canonical coherent valuation at time $t$ is conditional expectation under the assessment law:
\begin{equation}
\label{eq:price}
\pi_t(Y)
\;:=\;
\mathbb E_{\mathbb Q}\!\left[\,Y \mid \mathcal F_t\,\right].
\end{equation}

A standard fixed-information-set justification is the absence of a nonzero one-sided payoff. If gambles are priced by $\mathbb E_{\mathbb Q}[\cdot\mid\mathcal G]$, no finite collection of bounded $\mathcal G$-measurable stakes produces a trader payoff that is almost surely one-sided and strictly nonzero with positive probability. This is an internal statement under $\mathbb Q$, not a claim about regularity under $\mathbb P$. Cross-time consistency additionally relies on a coherent family of conditional assessments satisfying the tower property; the fixed-$\mathcal G$ result below does not itself prove a multi-period trading theorem \citep{Savage1954,deFinetti1975}.

This perspective is particularly natural in adaptive environments because the information set
$\mathcal{G}$ can be chosen to reflect the actual data-generating pipeline.  In closed-loop systems,
what the researcher observes is not merely a sequence of outcomes, but a sequence of decisions
and outcomes produced under a policy. The declared information set may therefore be the
$\sigma$-algebra generated by that history; it may include realized actions, covariates, model
outputs, stopping decisions, and any other variables that were available when forecasts were
issued.  Conditioning on this richer $\mathcal{G}$ is not a technical embellishment but the mathematical expression of the interpretive claim that probabilities are forecasts given the information actually used by the system.

To illustrate, consider a company running an internal prediction market for daily deployment success. Let $X_t\in\{0,1\}$ indicate success and let $\mathcal F_t$ contain deployments, code changes, and market information through time $t$. Under an assessment law $\mathbb Q$, a coherent price for the next success is
\[
p_t:=\mathbb Q(X_{t+1}=1\mid\mathcal F_t).
\]
A failure may trigger a hotfix, so the sequence need not be stationary. At each fixed information set, pricing by $p_t$ excludes a nonzero one-sided payoff of the form in \autoref{prop:coherence-decision}. Whether these prices are conditionally correct under the deployment law $\mathbb P$, and whether deployment frequencies stabilize, are separate questions.

Including a stopping rule in the information set similarly preserves this conditional-pricing interpretation, but it does not by itself establish a repeated-sampling guarantee; see \autoref{ex:opt-stopping} in Appendix C.

These examples highlight a core implication. Coherence can be maintained under $\mathbb Q$ even when the assessed process is adaptive. It means that a collection of conditional assessments respects one common probability law and the declared information sets. It does not establish that $\mathbb Q$ equals the deployment law $\mathbb P$, identify a stable long-run target, or justify treating empirical averages as regime-free frequencies.

\subsection{Decision-theoretic meaning: coherent probabilities as optimal forecasts}
\label{sec3.2-forecast}

Coherence also admits a decision-theoretic interpretation under the assessment law. Conditional probabilities are Bayes-optimal forecasts for proper scoring rules, with uniqueness for strictly proper rules \citep{GneitingRaftery2007}. The simplest instance is the strictly proper Brier (squared-error) score for binary events \citep{Brier1950}.

To keep the coherence layer conceptually anchored without proliferating propositions, we package the two standard facts
we use repeatedly into a single statement: (i) conditional expectation avoids sure loss against $\mathcal G$-measurable
trading, and (ii) conditional probability minimizes conditional Brier risk.

\begin{proposition}[Conditional no-arbitrage and Brier-optimal forecasting]
\label{prop:coherence-decision}
\label{prop:opt-forecast}
Let $(\Omega,\mathcal F,\mathbb Q)$ be a probability space and $\mathcal G\subseteq\mathcal F$.
\begin{enumerate}
\item \textnormal{\textbf{(No nonzero one-sided payoff at a fixed information set).}} For integrable gambles $Y_1,\ldots,Y_m\in L^1(\mathbb Q)$ and bounded $\mathcal G$-measurable stakes $H_1,\ldots,H_m$, define the trader's payoff
\[
S := \sum_{i=1}^m H_i\bigl(Y_i-\mathbb{E}_{\mathbb Q}[Y_i\mid \mathcal{G}]\bigr).
\]
Neither $S\ge0$ $\mathbb Q$-a.s. with $\mathbb Q(S>0)>0$ nor $S\le0$ $\mathbb Q$-a.s. with $\mathbb Q(S<0)>0$ is possible.

\item \textnormal{\textbf{(Bayes-optimal forecasting under Brier loss).}} For any event $A\in\mathcal F$, the conditional probability
$\mathbb Q(A\mid\mathcal G)$ is the $\mathbb Q$-a.s.\ unique $\mathcal G$-measurable minimizer of conditional expected Brier loss:
\[
\mathbb{Q}(A\mid\mathcal{G})
\in
\arg\min_{q\in L^0(\mathcal{G};[0,1])}\;
\mathbb{E}_{\mathbb Q}\bigl[(\mathbf{1}_A-q)^2\mid\mathcal{G}\bigr].
\]
\end{enumerate}
\end{proposition}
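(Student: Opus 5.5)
Both parts follow from elementary properties of conditional expectation under $\mathbb Q$. The plan is to show that $S$ has zero conditional mean given $\mathcal G$, and then to read off the Brier optimality from an orthogonal decomposition.

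For part 1, I would first check that $S$ is integrable: each $H_i$ is bounded and each $Y_i-\mathbb E_{\mathbb Q}[Y_i\mid\mathcal G]$ lies in $L^1(\mathbb Q)$, so $S\in L^1(\mathbb Q)$. Next, the ``taking out what is known'' property applies to bounded $\mathcal G$-measurable $H_i$. It gives
\[
\mathbb E_{\mathbb Q}[H_i(Y_i-\mathbb E_{\mathbb Q}[Y_i\mid\mathcal G])\mid\mathcal G]=H_i\bigl(\mathbb E_{\mathbb Q}[Y_i\mid\mathcal G]-\mathbb E_{\mathbb Q}[Y_i\mid\mathcal G]\bigr)=0.
\]
Summing over $i$ yields $\mathbb E_{\mathbb Q}[S\mid\mathcal G]=0$, and the tower property then gives $\mathbb E_{\mathbb Q}[S]=0$. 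Now suppose $S\ge0$ $\mathbb Q$-a.s. A nonnegative random variable with zero mean vanishes a.s., so $S=0$ a.s. This contradicts $\mathbb Q(S>0)>0$. The case $S\le0$ follows by applying the same argument to $-S$, which is the payoff with stakes $-H_i$.

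The only delicate point is integrability, namely that the product of a bounded $\mathcal G$-measurable variable with an $L^1$ variable is integrable and the pull-out rule applies. This is standard, and it is exactly why the statement requires bounded stakes.

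For part 2, set $p:=\mathbb Q(A\mid\mathcal G)=\mathbb E_{\mathbb Q}[\mathbf 1_A\mid\mathcal G]$, which can be chosen with values in $[0,1]$. Fix any $q\in L^0(\mathcal G;[0,1])$. Everything is bounded, so all expectations exist. Expand
\[
(\mathbf 1_A-q)^2=(\mathbf 1_A-p)^2+2(\mathbf 1_A-p)(p-q)+(p-q)^2.
\]
Take conditional expectations given $\mathcal G$. The cross term vanishes because $p-q$ is bounded and $\mathcal G$-measurable and $\mathbb E_{\mathbb Q}[\mathbf 1_A-p\mid\mathcal G]=0$. This gives
\[
\mathbb E_{\mathbb Q}[(\mathbf 1_A-q)^2\mid\mathcal G]=\mathbb E_{\mathbb Q}[(\mathbf 1_A-p)^2\mid\mathcal G]+(p-q)^2\quad\mathbb Q\text{-a.s.}
\]
This identity shows that $p$ minimizes the conditional risk pointwise a.s.

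For uniqueness, suppose $q$ is another minimizer, meaning its conditional risk is a.s. no larger than that of $p$. The identity then forces $(p-q)^2\le0$ a.s., so $q=p$ $\mathbb Q$-a.s.

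I expect no real obstacle here. The one step requiring care is interpreting ``$\arg\min$'' for the conditional, random-valued risk: I will define it as a.s. pointwise domination among $\mathcal G$-measurable competitors. I will also note that taking expectations gives the corresponding statement for unconditional Brier risk.
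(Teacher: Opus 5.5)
Your proposal is correct and matches the paper's proof essentially step for step: part 1 uses the pull-out and tower properties to get $\mathbb E_{\mathbb Q}[S\mid\mathcal G]=0$ and hence $\mathbb E_{\mathbb Q}[S]=0$, which rules out a one-sided nonzero payoff. Part 2 uses the same orthogonal decomposition of the conditional Brier risk, with a vanishing cross term and the additive $(p-q)^2$ penalty, which yields a.s.\ uniqueness.
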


\noindent\textit{Proof.} Standard; see \hyperref[appendix-A1]{Appendix A.1}.

\medskip

\autoref{prop:coherence-decision} formalizes the interpretive role of probability in the coherence layer: probabilities
are coherent conditional forecasts and coherent updates relative to a specified information set.  This internal
meaning is available regardless of whether any long-run frequency stabilization holds.

This proposition provides an operational interpretation of coherence under $\mathbb Q$: conditional probabilities are Brier-optimal forecasts relative to a specified information set. A probability forecast is not itself a decision rule. Given an action space $\mathcal A$ and loss $\ell$, it supports a Bayes action, when a minimizer exists, through
\[
a_t\in\arg\min_{a\in\mathcal A}\mathbb E_{\mathbb Q}[\ell(a,Y)\mid\mathcal F_t].
\]
The action space and loss or utility are indispensable parts of the decision problem. This interpretation does not presuppose that the assessed data
stream is exchangeable, stationary, or even stable in the long run.  In particular, the forecasting
meaning of probability survives in environments where the data-generating mechanism adapts,
drifts, or responds strategically to past predictions.

The structural point is that coherence operates at the level of the conditional assessment law. For a
given information set $\mathcal{G}$, the forecast $\mathbb{Q}(A\mid \mathcal{G})$ is defined and
evaluated relative to $\mathcal{G}$ alone.  Whether or not empirical averages stabilize, whether or
not repeated-sampling guarantees hold under some regime, and whether or not the system is
stationary are logically separate questions.  Coherence answers the question: ``\textit{Given the
information actually used to make this prediction, is the forecast internally consistent and
decision-theoretically optimal}?''  It does not answer the question: ``\textit{Will long-run frequencies
match these probabilities under some repetition mechanism}?'' This distinction becomes particularly transparent in adaptive data-generating regimes. 

Consider an online platform that, at each time $t$, selects an action $A_t \in \{0,1\}$ (e.g., which item to display to a user) based on past information $\mathcal{F}_{t-1}$ and then observes an outcome $X_t \in \{0,1\}$ (e.g., whether the user clicks on the recommended item). Let
\[
\mathcal{F}_t := \sigma(A_1,X_1,\ldots,A_t,X_t)
\]
denote the realized history. The governing-law conditionally correct forecast for the next outcome is the conditional
probability of a user clicking on a recommended item,
\[
p_t^* := \mathbb{P}(X_{t+1}=1 \mid \mathcal{F}_t),
\]
and the platform uses this forecast to select an action $A_{t+1}$. Suppose that user behavior gradually changes in response to prior recommendations, so that the conditional distribution of $X_{t+1}$ given $\mathcal{F}_t$ evolves over time in a
path-dependent way. The sequence $(X_t)$ need not be stationary, exchangeable, or convergent in frequency. 

At each time $t$, $p_t^*$ is the unique minimizer of conditional expected Brier loss given $\mathcal F_t$ under the displayed law. This establishes neither global optimality of the action policy nor convergence of realized loss to zero. Whether average loss stabilizes, and to what Bayes-risk level, is a separate regime-level question.

This separation extends to parameter inference, such as in adaptive clinical trials where coherent posteriors do not guarantee frequentist error control (detailed in \autoref{ex:adp-expr}).

These examples clarify a structural asymmetry that underlies the Principle. The belief role of probability is intrinsically filtration-relative, as it is defined with respect to the information actually used in forecasting and decision making. As long as conditional expectations are well defined, coherence ensures that probabilities function as optimal forecasts under proper
loss functions, regardless of the stability or instability of the data-generating process.
By contrast, frequency roles are inherently regime-relative because they require specifying how the process is to be repeated and which structural features are assumed invariant across replications.

In modern adaptive environments, where policies evolve and outcomes respond to prior predictions,
this distinction is not philosophical but operational.  Coherence supplies a stable interpretive
anchor for probabilistic reasoning at each time point, even when the global system lacks
long-run regularity.  Whether and how those coherent forecasts connect to long-run frequencies is
a separate question, addressed only after a reference structure has been made explicit.

\subsection{Evaluation-law conditional correctness and validity in the large}
\label{sec3.3-sequential}

Return to the adaptive-regime setup of \autoref{sec2.1-objects}.  Let $(Z_t)_{t\ge1}$ denote the observed stream and
$\mathcal F_t=\sigma(Z_1,\ldots,Z_t)$ the induced filtration.  Let $(X_t)_{t\ge1}$ be an adapted outcome process with
$0\le X_t\le 1$ and empirical mean $\bar X_n$ as in \eqref{eq:emp-mean}.

The deployment benchmark from \eqref{eq:eval-correctness} is conditional-mean correctness under the governing evaluation law $\mathbb P$:
\[
p_t=\mathbb E_{\mathbb P}[X_{t+1}\mid\mathcal F_t],
\]
equivalently,
\begin{equation}
\label{eq:one-step}
p_{t-1}:=\mathbb E_{\mathbb P}[X_t\mid\mathcal F_{t-1}],\qquad t\ge1.
\end{equation}
We do not assume that deployed AI systems satisfy this identity. Internal coherence under an assessment law $\mathbb Q$ implies it only when the relevant one-step conditional means under $\mathbb Q$ and $\mathbb P$ agree.

Define innovations
\[
d_t:=X_t-p_{t-1},\qquad t\ge1.
\]
Then $\mathbb E_{\mathbb P}[d_t\mid\mathcal F_{t-1}]=0$ $\mathbb P$-a.s., so $(d_t)$ is a $\mathbb P$-martingale-difference sequence and
$\sum_{t=1}^n d_t$ is a martingale \citep{Doob1949}.  A martingale strong law for bounded martingale differences yields
the baseline unweighted martingale conclusion implied by evaluation-law conditional correctness:
\begin{equation}
\label{eq:vitl}
\frac{1}{n}\sum_{t=1}^n (X_t-p_{t-1})\ \xrightarrow{\mathbb P\text{-a.s.}}\ 0,
\qquad\text{equivalently}\qquad
\bar X_n-\frac{1}{n}\sum_{t=1}^n p_{t-1}\ \xrightarrow{\mathbb P\text{-a.s.}}\ 0.
\end{equation}
Equation \eqref{eq:vitl} states validity in the large: forecast errors cancel on average along a $\mathbb P$-typical trajectory. It is the baseline unweighted consequence of the displayed $\mathbb P$-conditional-mean identity, not a consequence of internal $\mathbb Q$-coherence alone.

\begin{remark}[Validity in the large versus calibration]
\label{rem:vitl-vs-calibration}
Equation \eqref{eq:vitl} is generally insufficient for unconditional or rich-class calibration in the sense of
\hyperref[f:F3]{\textbf{(\ref*{f:F3})}}.  In particular, \eqref{eq:vitl} controls only the single global discrepancy
$\frac1n\sum_{t=0}^{n-1}(X_{t+1}-p_t)$; it does not control discrepancies restricted to data-dependent subsets of times,
such as those defined by binning forecasts (e.g., $\{t: p_t\in I\}$ for an interval $I$) or, more generally, by a rich
class of $\mathcal F_t$-measurable checking rules.  Obtaining binwise (or conditional) calibration typically requires
additional structure limiting either the complexity of the checking class or the adversarial adaptivity of the joint
process generating $(p_t,X_{t+1})$; see, for example, the prequential formulation of calibration and its refinements
in \citet{Dawid1982} and the online-calibration results of \citet{FosterVohra1998}.  In the language of this paper,
conditional-mean correctness also yields vanishing calibration gap for each fixed, predeclared Borel bin on the event that the bin is visited infinitely often (\autoref{prop:fixed-bin-calibration}). It does not ensure that this event occurs, provide an unconditional nonvacuous guarantee, or establish uniform calibration over a rich or data-dependent checking class.
\end{remark}

\subsection{What coherence and conditional correctness do \emph{not} deliver}
\label{sec3.4-not-frequency}

Internal coherence under $\mathbb Q$ supplies conditional-assessment meaning. Evaluation-law conditional correctness under $\mathbb P$ separately supplies \eqref{eq:vitl} and the limited fixed-bin result just described. Neither condition supplies all regime-level content needed for the frequency meanings in \autoref{sec2.3-frequency}:
\begin{itemize}
\item Conditional-mean correctness does not imply pathwise stabilization of $\bar X_n$ nor even existence of a limit
(\hyperref[f:F1]{\ref*{f:F1}}).
\item Internal coherence does not yield repeated-sampling guarantees for procedures without an explicit replication mechanism and a valid procedure under that mechanism
(\hyperref[f:F2]{\ref*{f:F2}}).
\item Conditional-mean correctness does not guarantee visitation, unconditional calibration, or uniform calibration over rich checking classes
(\hyperref[f:F3]{\ref*{f:F3}}).
\end{itemize}

In the language of the Principle, coherence provides filtration-level meaning under its assessment law. Empirical guarantees under the evaluation law require the premises appropriate to the stated target. The next section begins with exchangeability as a canonical regime in which conditionally correct forecasts and long-run frequencies align through a latent representation.

\section{Exchangeability as a Canonical Reference Structure}
\label{sec4-exchangeability}

Exchangeability is the cleanest setting in which the Principle becomes transparent.  It is a regime-level
symmetry assumption that makes ``repetition'' explicit.  Under this reference
structure, one-step forecasts that are conditionally correct under the governing exchangeable law and empirical
frequencies stabilize to the same latent quantity. This section develops exchangeability as the canonical alignment case in \autoref{tab:ref-structures}.

We focus on the binary case for clarity and because it connects directly to the stabilization meaning
\hyperref[f:F1]{(\ref*{f:F1})}.  Extensions to general Polish spaces are standard \citep{Aldous1985}.

\subsection{Exchangeability as permutation invariance}
\label{sec4.1-exch-def}

Let $(X_t)_{t\ge1}$ be a $\{0,1\}$-valued process with natural filtration $\mathcal F_t=\sigma(X_1,\ldots,X_t)$.  The
sequence is \emph{exchangeable} if for every $n\ge1$ and every permutation $\pi$ of $\{1,\ldots,n\}$,
\[
(X_1,\ldots,X_n)\ \overset{d}{=}\ (X_{\pi(1)},\ldots,X_{\pi(n)}).
\]
Exchangeability is a reference structure in the sense of \autoref{sec2.4-reference-structures}. It specifies what is
held fixed under repetition (the joint law up to reordering) and supplies a symmetry that legitimizes long-run averages
as frequency objects.  In adaptive regimes, exchangeability is typically violated. Its role here is therefore diagnostic, as it isolates what additional content beyond coherence is needed to recover frequency meaning.

\subsection{Latent representation (de Finetti) and its interpretive role}
\label{sec4.2-definetti}

The key fact is de Finetti’s representation theorem (\autoref{them:definetti}): an exchangeable Bernoulli sequence is a mixture of i.i.d.\
Bernoulli laws \citep{deFinetti1937,deFinetti1975,Aldous1985}.  Concretely, if $(X_t)$ is exchangeable, then there
exists a latent variable $\Theta\in[0,1]$ such that, conditional on $\Theta$, the sequence is i.i.d.\
$\mathrm{Bernoulli}(\Theta)$.  Equivalently, the exchangeable law is a mixture of product measures indexed by
$\theta\in[0,1]$. Since this result is classical, we defer a formal statement to \hyperref[appendix-A13]{Appendix A.13}.

In the language of \autoref{sec2-map}, $\Theta$ is the regime-level object induced by the symmetry; it is the latent quantity to which long-run empirical frequencies and governing-law conditionally correct one-step forecasts converge under exchangeability.

\subsection{Convergence of Governing-Law Forecasts and Long-run Frequency}
\label{sec4.3-alignment}

Define the one-step forecast that is conditionally correct under the governing exchangeable law:
\[
p_t:=\mathbb P(X_{t+1}=1\mid\mathcal F_t)=\mathbb E[X_{t+1}\mid\mathcal F_t],\qquad t\ge0.
\]
Under de Finetti’s representation, $p_t=\mathbb E[\Theta\mid\mathcal F_t]$, so $(p_t)$ is a bounded martingale.  In
parallel, conditional on $\Theta$ the strong law gives $\bar X_n\to\Theta$ almost surely.  Thus exchangeability makes
precise a canonical bridge from evaluation-law conditional correctness to frequency stabilization. If the assessment and governing laws agree on the forecast target, the same forecasts are also internally coherent.

\begin{proposition}[Exchangeability aligns forecasts and frequency]
\label{prop:exch-align}
Let $(X_t)_{t\ge1}$ be an exchangeable $\{0,1\}$-valued sequence with natural filtration
$\mathcal F_t=\sigma(X_1,\ldots,X_t)$.  Let $\Theta\in[0,1]$ be the latent variable from de Finetti’s representation
(\autoref{them:definetti}), and define $p_t:=\mathbb P(X_{t+1}=1\mid\mathcal F_t)$.
Then:
\begin{enumerate}
\item $ (p_t,\mathcal F_t)_{t\ge0}$ is a bounded martingale and $p_t\to\Theta$ almost surely.
\item The empirical mean $\bar X_n:=\frac1n\sum_{t=1}^n X_t$ converges almost surely to the same limit:
$\bar X_n\to\Theta$.
\end{enumerate}
\end{proposition}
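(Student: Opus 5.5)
The plan is to work throughout with de Finetti's representation (\autoref{them:definetti}). It gives a $\sigma(X_1,X_2,\ldots)$-measurable latent variable $\Theta\in[0,1]$ such that, conditional on $\Theta$, the $X_t$ are i.i.d.\ $\mathrm{Bernoulli}(\Theta)$. I would prove part 2 first, because it identifies $\Theta$ as an almost sure limit and hence settles its measurability, which part 1 needs.

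\emph{Part 2.} Let $\mu$ denote the mixing law of $\Theta$, so that $\mathbb P=\int\mathbb P_\theta\,\mu(d\theta)$ with $\mathbb P_\theta$ the i.i.d.\ $\mathrm{Bernoulli}(\theta)$ product law. Consider the event $E=\{\bar X_n\to\Theta\}$. For fixed $\theta$, the classical strong law gives $\mathbb P_\theta(\bar X_n\to\theta)=1$. Integrating over $\mu$ then yields $\mathbb P(E)=1$. Formally, $\mathbb P(E)=\mathbb E[\mathbb P(E\mid\Theta)]$, and on $\{\Theta=\theta\}$ we have $\mathbb P(E\mid\Theta)=\mathbb P_\theta(\bar X_n\to\theta)=1$. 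To make this rigorous I would use a regular conditional distribution given $\Theta$, or equivalently Fubini on the joint space of $(\Theta,X)$. A useful by-product is that $\Theta=\lim\bar X_n$ a.s., so $\Theta$ is $\mathcal F_\infty$-measurable up to null sets.

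\emph{Part 1.} Conditional independence given $\Theta$ shows $\mathbb P(X_{t+1}=1\mid\Theta,\mathcal F_t)=\Theta$. The tower property then gives $p_t=\mathbb E[\Theta\mid\mathcal F_t]$. This is a Doob (Lévy) martingale of the bounded variable $\Theta$, so it is a bounded martingale. By Lévy's upward theorem, $p_t\to\mathbb E[\Theta\mid\mathcal F_\infty]$ almost surely, where $\mathcal F_\infty=\sigma(X_1,X_2,\ldots)$. Since Part 2 shows $\Theta$ is $\mathcal F_\infty$-measurable, this limit equals $\Theta$.

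\emph{Main obstacle.} The delicate step is the identity $\mathbb P(X_{t+1}=1\mid\Theta,\mathcal F_t)=\Theta$. I would verify it on generating sets: for any cylinder $C\in\mathcal F_t$ and any bounded Borel $g$, show $\mathbb E[\mathbf 1_C\,g(\Theta)X_{t+1}]=\mathbb E[\mathbf 1_C\,g(\Theta)\Theta]$. This follows by conditioning on $\Theta$ and factoring the product measure $\mathbb P_\Theta$. Once this identity holds, the remaining steps are standard martingale convergence.
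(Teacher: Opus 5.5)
Your proposal is correct and follows the same route as the paper's proof: the tower property gives $p_t=\mathbb E[\Theta\mid\mathcal F_t]$, then L\'evy's upward theorem, then the strong law applied conditionally on $\Theta$. The only difference is where $\mathbb E[\Theta\mid\mathcal F_\infty]=\Theta$ comes from. You derive it from Part 2's identification $\Theta=\lim_n\bar X_n$ a.s., while the paper simply asserts that $\Theta$ may be taken measurable with respect to the exchangeable $\sigma$-field. Your ordering is therefore slightly more self-contained given the representation theorem as the paper states it, and it makes the measurability claim in your opening sentence unnecessary.
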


\noindent\textit{Proof.} Standard; see \hyperref[appendix-A2]{Appendix A.2}.  Part (1) follows from $p_t=\mathbb E[\Theta\mid\mathcal F_t]$
and martingale convergence; part (2) follows from the strong law conditional on $\Theta$.

\medskip

\autoref{prop:exch-align} is the canonical alignment case promised by \eqref{eq:sep-summary}. Conditional correctness under the
governing exchangeable law yields a clear frequency meaning (\hyperref[f:F1]{\ref*{f:F1}}), with a latent limit that also
governs stabilized forecasts. When the assessment law agrees with that governing law, these forecasts are internally coherent as well. This alignment is special because the exchangeability symmetry is typically absent in adaptive regimes. Even when empirical averages stabilize, their limits need not identify the underlying law or justify stronger frequency notions such as calibration. These limitations motivate the separation results established next in \autoref{sec5-separations}.

\section{Two Separation Results}
\label{sec5-separations}

\autoref{sec4-exchangeability} presented exchangeability as a canonical reference structure under which governing-law
conditional forecasts and frequency stabilization align.  The Principle, however, is fundamentally a statement about what
happens without such regime structure.  This section establishes two complementary separation results that
clarify the limits of frequency summaries and coherence alone.

Both results are logical rather than empirical.  They do not claim that stabilization or identification is impossible
in applications.  Instead, they formalize two boundary facts: (i) partial long-run-frequency information, even when a limit exists, can be too coarse to identify the underlying law; (ii) even conditional-mean correctness under the governing law does not force empirical frequencies to converge.

Together, these results motivate the need for the regime-level assumptions and drift conditions developed in
\autoref{sec6-beyond-exch}.

\subsection{Limits and partial limiting-frequency summaries need not identify the law}
\label{sec5.1-nonid}

Work on the path space $\mathcal X:=\{0,1\}^{\mathbb N}$ with its product $\sigma$-algebra $\mathcal B(\mathcal X)$. For a process $(X_t)_{t\ge1}$, define $\bar X_n$ as in \eqref{eq:emp-mean} and the globally measurable path functional
\[
L(\omega):=
\begin{cases}
\displaystyle\lim_{n\to\infty}\bar X_n(\omega),&\omega\in D,\\
0,&\omega\notin D.
\end{cases}
\]

The random variable $L$ is a path functional whose distribution is determined by the path law. Within the binary exchangeable class, $L=\Theta$ almost surely and $\mathcal L_{\mathbb P}(L)=\mu$. Because the de~Finetti mixing measure uniquely determines the exchangeable law, the full distribution of $L$ identifies that law \citep{deFinetti1937,kallenberg2005probabilistic}. The non-identification result below therefore concerns a proper coarsening of $\mathcal L_{\mathbb P}(L)$, such as the single atom probability $\mathbb P(L=p)$. A single realized value of $L$ likewise supplies only one path-level observation, not the across-replication distribution $\mu$. Under stationarity, Birkhoff's theorem yields $\bar X_n\to\mathbb E[X_1\mid\mathcal I]$ almost surely; one time average generally reveals only the realized value of this functional, not the invariant component or full stationary law \citep{Birkhoff1931,Walters1982,Krengel1985}.

In classical i.i.d.\ settings, $L$ is often interpreted as ``the'' underlying probability of success.  The conceptual
map of \autoref{sec2-map} emphasizes that such an interpretation already encodes a reference structure: i.i.d.\
sampling supplies a repetition mechanism under which $L$ is meaningful and informative.  Outside such a structure, $L$
captures only a single asymptotic feature of the law on $\mathcal X$ and need not determine the full predictive
content of the system.

The next proposition makes this precise in the most favorable case of exchangeability.  Even within the exchangeable
class, coarse frequency information does not identify the law.

\begin{proposition}[A proper coarsening of the limiting-frequency law does not identify the exchangeable law]
\label{prop:freq-not-law}
\leavevmode
\begin{enumerate}
\item \textnormal{\textbf{(Exchangeability implies stabilization only through a latent measure).}}
If $\mathbb P$ is exchangeable on $(\mathcal X,\mathcal B(\mathcal X))$, then there exists a $[0,1]$-valued random
variable $\Theta$ such that
\[
\bar X_n \xrightarrow{\mathrm{a.s.}} \Theta,
\]
and the law of $\Theta$ is the de Finetti mixing measure $\mu$ on $[0,1]$.  In particular, for each $p\in[0,1]$,
\begin{equation}
\label{eq:atom-mass}
\mathbb P(L=p)=\mu(\{p\}).
\end{equation}

\item \textnormal{\textbf{(Non-identifiability from coarse frequency information).}}
Fix any $p\in(0,1)$ and any $\alpha\in(0,1)$.  There exist distinct exchangeable probability laws
$\mathbb P_1\neq \mathbb P_2$ on $(\mathcal X,\mathcal B(\mathcal X))$ such that
\[
\mathbb P_1(L=p)=\mathbb P_2(L=p)=\alpha.
\]
\end{enumerate}
\end{proposition}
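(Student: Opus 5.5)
For part (1), I would invoke de Finetti's representation (\autoref{them:definetti}). It supplies a latent $\Theta$ with law $\mu$ such that, conditional on $\Theta=\theta$, the sequence is i.i.d.\ $\mathrm{Bernoulli}(\theta)$. The strong law applies under each product measure $\mathrm{Ber}(\theta)^{\otimes\mathbb N}$, so the set
\[
\{\omega:\bar X_n(\omega)\to\theta\}
\]
has conditional probability one. Integrating against $\mu$ then gives $\mathbb P(\bar X_n\to\Theta)=1$. This is essentially part (2) of \autoref{prop:exch-align}, which I may cite directly. It follows that $\mathbb P(D)=1$ and $L=\Theta$ almost surely. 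Hence $\mathcal L_{\mathbb P}(L)=\mathcal L(\Theta)=\mu$, and in particular $\mathbb P(L=p)=\mu(\{p\})$, which is \eqref{eq:atom-mass}.

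One measurability point needs care. The set $D$ and the functional $L$ are Borel on $\mathcal X$, because $\limsup\bar X_n$ and $\liminf\bar X_n$ are measurable. Consequently $\{L=p\}$ is an event and the identity is meaningful.

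For part (2), I would build the two laws explicitly as de Finetti mixtures. Fix $p,\alpha\in(0,1)$. Choose two distinct points $q_1\neq q_2$ in $[0,1]\setminus\{p\}$, for instance $q_1=0$ and $q_2=1$, noting that $p\in(0,1)$ guarantees these differ from $p$. Set
\[
\mu_i:=\alpha\,\delta_p+(1-\alpha)\,\delta_{q_i},\qquad
\mathbb P_i:=\int \mathrm{Ber}(\theta)^{\otimes\mathbb N}\,\mu_i(d\theta).
\]
Each $\mathbb P_i$ is exchangeable, being a mixture of i.i.d.\ laws. By part (1), $\mathbb P_i(L=p)=\mu_i(\{p\})=\alpha$, since $q_i\neq p$.

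It remains to show $\mathbb P_1\neq\mathbb P_2$. I would do this directly with a single cylinder event, avoiding any appeal to uniqueness of the mixing measure. Compute
\[
\mathbb P_i(X_1=1)=\alpha p+(1-\alpha)q_i,
\]
which gives $\alpha p$ for $q_1=0$ and $\alpha p+1-\alpha$ for $q_2=1$. These differ because $\alpha<1$. As an alternative check, part (1) gives $\mathcal L_{\mathbb P_i}(L)=\mu_i$, and $\mu_1\neq\mu_2$.

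No step is a real obstacle. The only subtlety is conceptual: the proof needs \emph{exact} atom mass $\alpha$, which is why $q_i\neq p$ is required, and it needs genuine distinctness of the path laws. The first-coordinate marginal computation settles the second point cleanly. I would also note that $\alpha\in(0,1)$ is what makes the construction nontrivial. If $\alpha=1$, then $\mu_i=\delta_p$ is forced and the law is identified.
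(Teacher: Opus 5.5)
Your proposal is correct and follows the paper's proof. In both, de~Finetti plus the strong law conditional on $\Theta$ gives $L=\Theta$ a.s.\ and hence $\mathbb P(L=p)=\mu(\{p\})$, and the two-point mixtures $\alpha\delta_p+(1-\alpha)\delta_{q_i}$ with $q_i\neq p$ give distinct exchangeable laws sharing atom mass $\alpha$ at $p$. The differences are minor. The paper takes the extra atoms in $(0,1)$ and deduces $\mathbb P_1\neq\mathbb P_2$ from uniqueness of the de~Finetti mixing measure, whereas you allow $q_i\in\{0,1\}$ and check distinctness directly via $\mathbb P_i(X_1=1)=\alpha p+(1-\alpha)q_i$, which is slightly more elementary.
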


\noindent\textit{Proof.} See \hyperref[appendix-A3]{Appendix A.3}.

\medskip

\autoref{prop:freq-not-law} does not deny identification from the full law of $L$ within the binary exchangeable class. It shows that a proper summary, here $\mathbb P(L=p)$, can be shared by distinct exchangeable laws. A single realized limit is likewise only one observation of $L$, not its across-replication law. Identification from such information requires richer across-replication data or additional identifying restrictions.

For concrete illustrations of this non-identification phenomenon in adaptive systems, see \autoref{ex:freq-not-law}, \autoref{ex:rec-alg}, and \autoref{ex:adp-clin}.

The interpretive content of a realized $L$, or partial information about its distribution, depends on the declared regime specifying what is held fixed, how policies evolve, and what invariance or controlled drift is assumed. Behavior under alternative policies additionally requires a causal or structural intervention model and its identification assumptions. It is therefore not justified to treat one limit or a partial summary as a stand-alone definition of the governing law.

This establishes the first separation emphasized in the paper. A proper coarsening of a limiting-frequency law need not recover the governing path law. In feedback-driven environments, such summaries reflect the joint dynamics of outcomes and policy updates.

\subsection{Conditional-mean correctness does not imply stabilization}
\label{sec5.2-nonconv}

\autoref{sec3-coherence-forecast} showed that conditional-mean correctness under the governing law yields a martingale structure and validity in the large \eqref{eq:vitl}. One might hope that this condition would also enforce stabilization of empirical
frequencies, i.e.\ almost sure existence of $\lim_{n\to\infty}\bar X_n$ (meaning
\hyperref[f:F1]{(\ref*{f:F1})}). The following proposition shows that it does not.

\begin{proposition}[Conditional-mean correctness does not imply stabilization]
\label{prop:coh-not-stable}
There exists a non-degenerate, countably additive law $\mathbb R$ on the binary path space $\mathcal X$ such that, with $\mathcal F_t=\sigma(X_1,\ldots,X_t)$ and
\[
p_{t-1}:=\mathbb E_{\mathbb R}[X_t\mid\mathcal F_{t-1}],
\]
the forecasts are conditionally correct under $\mathbb R$ and
\[
\frac1n\sum_{t=1}^n(X_t-p_{t-1})\xrightarrow{\mathbb R\text{-a.s.}}0,
\qquad
\mathbb R(D)=0.
\]
Thus conditional-mean correctness and validity in the large do not imply stabilization of $\bar X_n$.
\end{proposition}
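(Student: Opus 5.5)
We construct $\mathbb R$ explicitly as a product of independent Bernoulli laws with a deterministic, block-alternating success probability. The martingale strong law then gives validity in the large, and the block design makes $\bar X_n$ oscillate along every typical path. Fix two levels, say $a=1/4$ and $b=3/4$. Choose block endpoints $n_0=0<n_1<n_2<\cdots$ growing fast enough that each block dominates everything before it, for instance $n_k=k!$ or $n_k=2^{k^2}$, so that $n_{k-1}/n_k\to0$. Define the deterministic sequence $q_t:=a$ for $t\in(n_{k-1},n_k]$ with $k$ odd, and $q_t:=b$ for $k$ even. Let $\mathbb R$ be the law on $\mathcal X$ under which the $X_t$ are independent with $X_t\sim\mathrm{Bernoulli}(q_t)$. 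This law is countably additive by the Kolmogorov extension (product measure) theorem. It is non-degenerate because every $q_t\in(0,1)$, so no path has positive mass and every cylinder set has positive probability.

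By independence, $p_{t-1}=\mathbb E_{\mathbb R}[X_t\mid\mathcal F_{t-1}]=q_t$ $\mathbb R$-a.s., so the forecasts are conditionally correct by definition. The innovations $d_t=X_t-q_t$ are bounded by $1$ and form an $\mathbb R$-martingale-difference sequence. Since $\sum_t \mathbb E[d_t^2]/t^2<\infty$, Kolmogorov's (or the martingale) strong law gives $\frac1n\sum_{t\le n}(X_t-p_{t-1})\to0$ $\mathbb R$-a.s., which is exactly \eqref{eq:vitl}.

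It remains to show $\mathbb R(D)=0$. On the almost-sure event where \eqref{eq:vitl} holds, $\bar X_n-\bar q_n\to0$, where $\bar q_n:=\frac1n\sum_{t\le n}q_t$ is deterministic. It therefore suffices to show that $\bar q_n$ fails to converge. Evaluate it at the block ends:
\[
\bar q_{n_k}=\frac{n_k-n_{k-1}}{n_k}\,q_{(k)}+\frac{1}{n_k}\sum_{t\le n_{k-1}}q_t,
\]
where $q_{(k)}$ is the level used on block $k$. Because $n_{k-1}/n_k\to0$, the second term is at most $n_{k-1}/n_k\to0$, and the first term approaches $q_{(k)}$. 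Hence $\bar q_{n_k}\to a$ along odd $k$ and $\bar q_{n_k}\to b$ along even $k$. Consequently $\liminf_n\bar X_n\le a<b\le\limsup_n\bar X_n$ almost surely, so $\lim_n \bar X_n$ fails to exist $\mathbb R$-a.s. and $\mathbb R(D)=0$.

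The only delicate step is the choice of block lengths. If the blocks grow only geometrically, $\bar q_{n_k}$ still oscillates, but between values strictly inside $(a,b)$; the argument still works but needs an explicit computation. Superexponential growth makes the limit points clean, so I would use $n_k=k!$. This also makes clear that the phenomenon comes from the deterministic drift $\bar q_n$, which anticipates the reference--drift decomposition of \autoref{sec6-beyond-exch}.
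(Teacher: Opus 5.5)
Your proof is correct, but it uses a different law from the paper's.

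The paper's construction sets $X_t:=\Xi_k$, constant on blocks of length $2^{k^2}$, with $(\Xi_k)$ i.i.d.\ $\mathrm{Bernoulli}(1/2)$. It shows pathwise that $|\bar X_{T_k}-\Xi_k|\le T_{k-1}/T_k\to0$ and concludes from the fact that $\Xi_k$ takes both values $0$ and $1$ infinitely often. The martingale strong law enters only for the validity-in-the-large clause; the randomness that defeats stabilization sits in the block values.

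You instead make the coordinates independent $\mathrm{Bernoulli}(q_t)$ with a deterministic alternating schedule. The oscillation then lives entirely in the deterministic drift average $\bar q_n$, and the strong law for the innovations is exactly what transfers it to $\bar X_n$.

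What your version buys:
\begin{itemize}
\item Nondegenerate forecasts: $p_{t-1}=q_t\in\{1/4,3/4\}$ at every step. In the paper's law, $p_{t-1}=\Xi_k\in\{0,1\}$ at every time of block $k$ after the first, and the law charges only block-constant paths.
\item A literal instance of the reference--drift decomposition \eqref{eq:drift-innovation-sec6} with a non-convergent drift average. It is essentially the Appendix B construction with a fixed action schedule in place of feedback.
\end{itemize}

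What the paper's version buys:
\begin{itemize}
\item Extreme subsequential limits $0$ and $1$.
\item An oscillation step that needs no limit theorem beyond ``both values recur.''
\item A random, rather than prescheduled, pattern of which block ends approach which limit.
\end{itemize}
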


\noindent\textit{Proof.} See \hyperref[appendix-A4]{Appendix A.4}.

\begin{exmpm}[RL feedback with conditional correctness but without stabilization]
\label{ex:coh-not-stable}    
We include a stylized reinforcement-learning (RL) simulation in a
one-state, two-action environment. Rewards are conditionally Bernoulli given the predictable action sequence, and the policy is updated from each episode-average reward. With $\mathcal F_t=\sigma(A_1,X_1,\ldots,A_t,X_t)$ and $p_{t-1}=\mathbb E[X_t\mid\mathcal F_{t-1}]=r_{A_t}$, the innovations are martingale differences. A Hoeffding--Borel--Cantelli argument establishes eventual alternating actions and two distinct end-of-episode limits, so $\bar X_n$ fails to converge almost surely. \autoref{fig:rl-coh-not-stable} is a finite illustration; the infinite-horizon result is proved in \hyperref[app:sim-rl-coh-not-stable]{Appendix B}.
\end{exmpm}

\autoref{prop:coh-not-stable} is a generic nonstationary existence result: its block variables are exogenous, so the proposition itself does not model endogenous closed-loop learning. Appendix B gives a separate feedback-driven construction under explicit reward-generation and policy-update assumptions. Both show that conditional-mean correctness does not control convergence of the predictable drift. Neither result suggests that nonconvergence is typical.

The result is consistent with \eqref{eq:vitl}: martingale innovations can average to zero while the predictable mean continues to oscillate. Stabilization therefore remains a distinct, regime-dependent target.

Accordingly, evaluation-law conditional correctness secures innovation cancellation but not convergence of the policy--environment drift. Establishing stabilization requires explicit assumptions about policy evolution, held-fixed components, and invariance or controlled drift.

\subsection{Implication for the Principle and transition to stabilization theory}
\label{sec5.3-transition}

The two propositions above formalize the limiting implications of the Principle. 
\autoref{prop:freq-not-law} establishes non-identification from a proper coarsening of the limiting-frequency law. \autoref{prop:coh-not-stable} shows that evaluation-law conditional correctness does not guarantee long-run stabilization.

We next develop the constructive side of the Principle by introducing a unified stabilization framework in which empirical averages are decomposed into a predictable ``reference drift'' component and a martingale innovation component. 
Under explicit regime-level assumptions that control the reference drift, such as exchangeability, stationarity or ergodicity, a fixed policy--environment pair, or design-based randomization, this framework yields stabilization results and clarifies which frequency interpretations are warranted in adaptive regimes.

\section{A Unified Stabilization Framework Beyond Exchangeability}
\label{sec6-beyond-exch}

The purpose of this section is to develop the complementary constructive direction of the Principle.  We do not introduce new limit theorems.  Rather, we organize standard stabilization results under a single template that makes explicit (i) which part of a frequency guarantee is a purely filtration-level consequence of conditional expectation, and (ii) where regime-level assumptions (reference structures) enter to control systematic drift.

\subsection{Reference--drift decomposition and a unified stabilization theorem}
\label{sec6.1-unified}

Let $(Z_t)_{t\ge1}$ be an observed stream and let $X_t:=g(Z_t)$ be a bounded statistic of interest, such as an error indicator, click-through event, or bounded reward. Let $(\mathcal F_t)_{t\ge0}$ be the filtration generated by the observed history. A full reference structure $\mathfrak R$ specifies the repetition protocol, held-fixed variables, governing law, and target-specific invariance or drift assumptions.

To encode held-fixed variables or analyst-side reference conditioning information, such as latent regimes or fixed potential outcomes, introduce a reference $\sigma$-algebra $\mathcal R\subseteq\mathcal F$
and consider the augmented information sets
\[
\mathcal H_t \;:=\; \mathcal R\vee \mathcal F_t,\qquad t\ge0.
\]
The $\sigma$-algebra $\mathcal R$ is one component of $\mathfrak R$; stationarity, ergodicity, policy-kernel restrictions, randomization, and drift convergence are properties of the governing law or protocol rather than of $\mathcal R$ alone. Define the \emph{reference drift} using the reference conditioning information in the probability analysis:
\begin{equation}
\label{eq:ref-drift-sec6}
m_t \;:=\; \mathbb E_{\mathbb P}[X_t\mid \mathcal H_{t-1}],\qquad t\ge1.
\end{equation}
Define the innovation (martingale difference)
\[
d_t \;:=\; X_t-m_t,\qquad t\ge1.
\]
By construction, $\mathbb E[d_t\mid \mathcal H_{t-1}]=0$ almost surely, so $(d_t)$ is a martingale difference sequence
with respect to $(\mathcal H_t)$.  Averaging yields the basic reference--drift decomposition
\begin{equation}
\label{eq:drift-innovation-sec6}
\bar X_n \;=\; \frac{1}{n}\sum_{t=1}^n m_t \;+\; \frac{1}{n}\sum_{t=1}^n d_t.
\end{equation}

\begin{remark}[Connection to Doob's predictable--martingale decomposition]
\label{rem:doob-connection}
The split \eqref{eq:drift-innovation-sec6} is in the spirit of Doob's predictable--martingale decomposition. For the
partial sums $S_n:=\sum_{t=1}^n X_t$, one may write $S_n=A_n+M_n$ where $(A_n)$ is predictable and $(M_n)$ is a
martingale. We work directly with empirical averages and make the regime dependence explicit through the
reference $\sigma$-algebra $\mathcal R$, so that the predictable component is the reference drift $m_t=\mathbb E[X_t\mid\mathcal H_{t-1}]$.
This specialization isolates the precise point at which reference structures enter stabilization and time-uniform guarantees.
\end{remark}

The second term in \eqref{eq:drift-innovation-sec6} is the familiar ``innovation cancellation'' mechanism underlying validity in the large \eqref{eq:vitl}. The regime content is therefore carried by the first term, which depends on the chosen reference structure. Accordingly, stabilization of $\bar X_n$ reduces to stabilization of the reference drift averages.

\begin{proposition}[Unified stabilization via a reference drift]
\label{prop:unified-stabilization}
Let $(\Omega,\mathcal F,\mathbb P)$ be a probability space, $(\mathcal F_t)_{t\ge0}$ a filtration, and let
$(X_t)_{t\ge1}$ be adapted with
\begin{equation}
\label{eq:bounded-X-sec6}
0\le X_t\le 1\quad\text{a.s.\ for all }t\ge1.
\end{equation}
Let $\mathcal R\subseteq\mathcal F$ be a reference $\sigma$-algebra and define $\mathcal H_t=\mathcal R\vee\mathcal F_t$
and $m_t=\mathbb E[X_t\mid\mathcal H_{t-1}]$ as in \eqref{eq:ref-drift-sec6}.
If there exists a random variable $\theta$ such that
\begin{equation}
\label{eq:cesaro-drift-sec6}
\frac{1}{n}\sum_{t=1}^n m_t \xrightarrow{\mathrm{a.s.}} \theta,
\end{equation}
then the empirical mean stabilizes to the same limit:
\begin{equation}
\label{eq:barX-to-theta-sec6}
\bar X_n:=\frac{1}{n}\sum_{t=1}^n X_t \xrightarrow{\mathrm{a.s.}} \theta.
\end{equation}
Equivalently, 
\[
\bar X_n-\frac1n\sum_{t=1}^n m_t \xrightarrow{\mathrm{a.s.}}0.
\]
\end{proposition}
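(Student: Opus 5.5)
The plan is to reduce everything to the decomposition \eqref{eq:drift-innovation-sec6} and prove that the innovation average vanishes almost surely. Given that, the claim follows at once: if $\frac1n\sum_{t\le n} d_t\to0$ a.s. and \eqref{eq:cesaro-drift-sec6} holds, then $\bar X_n=\frac1n\sum m_t+\frac1n\sum d_t\to\theta$ a.s. The "equivalently" statement is exactly the assertion $\frac1n\sum d_t\to 0$. So all the content sits in one martingale strong law, which will hold without any use of \eqref{eq:cesaro-drift-sec6}.

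\textbf{Step 1 (martingale structure).} First check that $m_t=\mathbb E[X_t\mid\mathcal H_{t-1}]$ is well defined and $[0,1]$-valued a.s., by \eqref{eq:bounded-X-sec6} and monotonicity of conditional expectation. Next, $X_t$ is $\mathcal F_t\subseteq\mathcal H_t$-measurable and $m_t$ is $\mathcal H_{t-1}$-measurable, so $d_t$ is $\mathcal H_t$-adapted with $|d_t|\le1$. Finally, $\mathbb E[d_t\mid\mathcal H_{t-1}]=0$. Hence $M_n:=\sum_{t=1}^n d_t$ is a square-integrable $(\mathcal H_t)$-martingale. The differences are orthogonal, so $\mathbb E[d_t^2]\le1$.

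\textbf{Step 2 (strong law).} Consider $N_n:=\sum_{t=1}^n d_t/t$. This is an $(\mathcal H_t)$-martingale with $\sup_n\mathbb E[N_n^2]=\sum_t\mathbb E[d_t^2]/t^2\le\sum_t t^{-2}<\infty$. It is therefore $L^2$-bounded and converges a.s. by Doob's martingale convergence theorem. Kronecker's lemma, applied pathwise with weights $b_t=t\uparrow\infty$, then gives $\frac1n\sum_{t\le n} d_t\to0$ a.s. (Alternatively, Azuma--Hoeffding gives $\mathbb P(|M_n|>n\varepsilon)\le2e^{-n\varepsilon^2/2}$, which is summable, and Borel--Cantelli then yields the same conclusion directly.)

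\textbf{Step 3 (combine).} Intersect the two almost-sure events: the Cesàro convergence of the $m_t$ and the vanishing of the innovation average. On this intersection, \eqref{eq:barX-to-theta-sec6} holds.

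The only point needing care, and the step I consider the main (mild) obstacle, is that the martingale property must be taken with respect to the enlarged filtration $(\mathcal H_t)$ rather than $(\mathcal F_t)$. One should verify that $\mathcal H_t=\mathcal R\vee\mathcal F_t$ is indeed increasing, so Doob's theorem applies, and that conditioning on $\mathcal R$ does not break adaptedness. Both are immediate from $\mathcal F_{t-1}\subseteq\mathcal F_t$ and the definition of the join. No measurability or integrability of $\theta$ beyond being a random variable is needed, since $\theta$ enters only through the hypothesis.
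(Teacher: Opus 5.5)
Your proof is correct and has the same skeleton as the paper's. You split $\bar X_n$ into drift plus innovation, show the innovation average vanishes by a martingale strong law that never uses \eqref{eq:cesaro-drift-sec6}, and intersect two almost-sure events.

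The difference lies in which strong law you invoke. The paper's Appendix A.5 uses exactly your parenthetical alternative: Azuma--Hoeffding with $|d_t|\le 1$ gives $\mathbb P(|M_n|\ge n\varepsilon)\le 2e^{-n\varepsilon^2/2}$, and Borel--Cantelli finishes. Your main route instead uses the $L^2$-bounded martingale $\sum_t d_t/t$ together with Kronecker's lemma.

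\begin{itemize}
\item The exponential-tail route needs bounded increments, or at least a conditional sub-Gaussian bound. In return it gives explicit finite-$n$ deviation bounds.
\item Your route needs only $\sum_t \mathbb E[d_t^2]/t^2<\infty$. It would therefore survive replacing \eqref{eq:bounded-X-sec6} by a uniform second-moment bound $\sup_t\mathbb E[X_t^2]<\infty$, since $\mathbb E[d_t^2]\le\mathbb E[X_t^2]$.
\item Your route is also the same device the paper uses for \autoref{prop:fixed-bin-calibration}, with predictable weights $W_t/(N_{t-1}(I)+1)$ in place of $1/t$.
\end{itemize}

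One small wording point: $\mathbb E[d_t^2]\le 1$ follows from $|d_t|\le 1$, not from orthogonality. Orthogonality of the increments is what gives the identity $\mathbb E[N_n^2]=\sum_{t\le n}\mathbb E[d_t^2]/t^2$.
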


\noindent\textit{Proof.} See \hyperref[appendix-A5]{Appendix A.5}. (Apply a martingale strong law to the bounded martingale differences
$d_t=X_t-\mathbb E[X_t\mid\mathcal H_{t-1}]$.)

\medskip

\autoref{prop:unified-stabilization} makes the Principle operational at theorem level. It separates innovation cancellation under the governing law from reference-drift stabilization under the declared regime. If $(d_t)$ has martingale-difference structure with bounded increments, then a martingale strong law yields
\[
\frac{1}{n}\sum_{t=1}^n d_t \;\longrightarrow\; 0 \quad \text{a.s.}
\]
This conclusion is law- and filtration-relative. It is robust to adaptivity because it does not invoke stationarity or independence, but it is not implied by internal coherence under a different assessment law.

The second ingredient is \emph{reference-drift stabilization}. Stabilization of empirical averages additionally requires that the predictable component $(m_t)$ satisfy a regime-level convergence condition,
\[
\frac{1}{n}\sum_{t=1}^n m_t \;\longrightarrow\; \theta \quad \text{a.s.}
\]
This is the point at which reference-structure assumptions enter. It is also the ingredient that fails in \autoref{prop:coh-not-stable} and in the reinforcement-learning illustration (\autoref{ex:coh-not-stable}), where innovations cancel as above while the predictable component continues to oscillate under persistent regime switching, thereby preventing stabilization of the empirical mean.

The proposition also clarifies why stabilization need not identify the full law (\autoref{sec5.1-nonid}). Even when $\bar X_n \to \theta$, the limit is a reference-structure-dependent projection of the underlying law, determined through the drift process $(m_t)$ and the particular statistic $(X_t)$ being averaged, rather than a sufficient summary of the joint law governing finite-horizon predictions.

\begin{remark}[Exchangeability as a special case]
\label{rem:exch-spec}
Under exchangeability, one may take $\mathcal R=\sigma(\Theta)$ where $\Theta$ is the de Finetti latent variable
(\autoref{sec4.2-definetti}). Then $m_t=\mathbb E[X_t\mid \Theta,\mathcal F_{t-1}]=\Theta$, so \eqref{eq:cesaro-drift-sec6} holds
trivially with $\theta=\Theta$, recovering $\bar X_n\to\Theta$ (\autoref{prop:exch-align}).
\end{remark}

\begin{exmpm}[Diagnosing non-stabilization in an RL feedback regime via the drift--innovation split]
\label{ex:drift-diagnosis-rl}
Consider the non-stabilizing RL-style construction in \autoref{ex:coh-not-stable}.  Let the realized
history filtration be
\[
\mathcal{F}_t:=\sigma(A_1,X_1,\ldots,A_t,X_t),
\]
and take the reference conditioning $\sigma$-algebra $\mathcal R$ to be trivial, so that
$\mathcal{H}_t=\mathcal{F}_t$.  The unified framework of Section~\ref{sec6.1-unified} decomposes
\[
X_t = m_t + d_t,
\qquad
m_t:=\mathbb{E}[X_t\mid \mathcal{F}_{t-1}],
\qquad
d_t:=X_t-m_t,
\]
where $(d_t)$ is a bounded martingale-difference sequence.

In this construction, the environment is stationary conditional on the action, in the sense that
\[
m_t=\mathbb{E}[X_t\mid \mathcal{F}_{t-1}]
=
\mathbb{E}[X_t\mid A_t]
=
r_{A_t},
\qquad r_{A_t}\in\{r_0,r_1\},
\]
so the entire predictable component of $X_t$ is encoded by the action sequence.  The
innovation term is asymptotically negligible for the empirical average. By bounded martingale-difference cancellation,
\[
\frac{1}{n}\sum_{t=1}^n d_t \xrightarrow{\mathrm{a.s.}} 0.
\]
Therefore, whether the empirical mean stabilizes is determined entirely by the drift average:
\[
\bar X_n
=
\frac{1}{n}\sum_{t=1}^n m_t
+
\frac{1}{n}\sum_{t=1}^n d_t
=
\frac{1}{n}\sum_{t=1}^n r_{A_t}
+
o(1)
\quad\text{a.s.}
\]
\end{exmpm}

\subsection{Filtration-based Stability}
\label{sec6.2-filtration}

In many adaptive deployments, the most natural reference structure is purely sequential in the sense that repetition and comparability are understood relative to the realized information history (actions, covariates, model states, outcomes), without additional latent invariants.  In \autoref{sec6.1-unified}, this corresponds to taking $\mathcal R$ trivial, so that
$\mathcal H_t=\mathcal F_t$ and $m_t=\mathbb E_{\mathbb P}[X_t\mid\mathcal F_{t-1}]$ is the governing-law one-step conditional mean.

A particularly transparent sufficient condition for stabilization is that these predictive means themselves stabilize along the
realized filtration.

\begin{proposition}[Predictive convergence implies stabilization of empirical averages]
\label{prop:filtration-stabilization}
Let $(X_t)_{t\ge1}$ be adapted to $(\mathcal F_t)_{t\ge0}$ with $0\le X_t\le 1$ a.s.\ for all $t$.
Define the governing-law one-step predictive means
\[
p_t := \mathbb E_{\mathbb P}[X_{t+1}\mid \mathcal F_t],\qquad t\ge 0,
\]
and the empirical mean $\bar X_n = \frac1n\sum_{t=1}^n X_t$.
If $p_t \xrightarrow{\mathrm{a.s.}} p_\infty$ for some random variable $p_\infty$, then
\[
\bar X_n \xrightarrow{\mathrm{a.s.}} p_\infty.
\]
\end{proposition}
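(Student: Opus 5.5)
The plan is to reduce the statement to \autoref{prop:unified-stabilization} with trivial reference $\sigma$-algebra $\mathcal R=\{\emptyset,\Omega\}$, so that $\mathcal H_t=\mathcal F_t$ and the reference drift is
\[
m_t=\mathbb E_{\mathbb P}[X_t\mid\mathcal F_{t-1}]=p_{t-1},\qquad t\ge1.
\]
Because $0\le X_t\le 1$, every $p_t$ lies in $[0,1]$ almost surely, so any almost sure limit $p_\infty$ can be taken $[0,1]$-valued, after modification on a null set. With this identification, the hypothesis of \autoref{prop:unified-stabilization} becomes the Ces\`aro condition
\[
\frac1n\sum_{t=1}^n p_{t-1}\xrightarrow{\mathrm{a.s.}}p_\infty .
\]
The conclusion $\bar X_n\to p_\infty$ then follows directly from \eqref{eq:barX-to-theta-sec6} with $\theta=p_\infty$.

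The only step needing an argument is passing from pointwise convergence $p_t\to p_\infty$ to Ces\`aro convergence of the drift averages. This is the deterministic Ces\`aro lemma applied pathwise. Fix $\omega$ in the full-measure event where $p_t(\omega)\to p_\infty(\omega)$, and fix $\varepsilon>0$. Choose $T$ with $|p_t(\omega)-p_\infty(\omega)|<\varepsilon$ for all $t\ge T$. Then
\[
\Bigl|\frac1n\sum_{t=1}^n p_{t-1}(\omega)-p_\infty(\omega)\Bigr|\le \frac{1}{n}\sum_{t=1}^{T}\bigl|p_{t-1}(\omega)-p_\infty(\omega)\bigr|+\varepsilon\le \frac{T}{n}+\varepsilon .
\]
The first term is at most $T/n$ because both quantities lie in $[0,1]$. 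Letting $n\to\infty$ and then $\varepsilon\downarrow0$ gives the Ces\`aro limit on that event, hence almost surely.

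For a self-contained argument that does not rely on \autoref{prop:unified-stabilization}, I would instead write $\bar X_n-p_\infty=\frac1n\sum_{t=1}^n d_t+\bigl(\frac1n\sum_{t=1}^n p_{t-1}-p_\infty\bigr)$ with $d_t=X_t-p_{t-1}$. The first term vanishes almost surely by the martingale strong law already invoked in \eqref{eq:vitl}. One route is Kronecker's lemma applied to the $L^2$-bounded martingale $\sum_t d_t/t$, whose increments satisfy $\sum_t\mathbb E[d_t^2]/t^2\le\sum_t t^{-2}<\infty$. The second term vanishes by the Ces\`aro step above.

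I expect no real obstacle. The points needing care are the index shift between $p_{t-1}$ and $m_t$, and noting that the martingale strong law needs only bounded increments, not identical distribution or independence. Measurability of $p_\infty$ is part of the hypothesis, and since the limit is identified on a full-measure event, the exceptional null sets from the two almost sure statements can simply be united.
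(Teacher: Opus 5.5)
Your proposal is correct and is essentially the paper's proof. Like the paper, you specialize \autoref{prop:unified-stabilization} to a trivial $\mathcal R$, so that $m_t=p_{t-1}$, and then use the Ces\`aro lemma to get $\frac1n\sum_{t=1}^n p_{t-1}\to p_\infty$ almost surely; your pathwise bound and the alternative Kronecker-lemma route only spell out details the paper leaves implicit.
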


\noindent\textit{Proof.} See \hyperref[appendix-A6]{Appendix A.6}, where the result follows by applying \autoref{prop:unified-stabilization} with $\mathcal R$ trivial and observing that Ces\`aro averaging preserves almost sure limits. An illustrative example is given in \autoref{ex:freezeout}.

Filtration-based reference structures also support \emph{anytime-valid} guarantees of type
\hyperref[f:F2]{(\ref*{f:F2})}, which remain valid under data-dependent stopping.
A standard route is through nonnegative supermartingales and Ville-type inequalities
\citep{Ville1939,HowardRamdasMcAuliffeSekhon2020}.

\begin{proposition}[Time-uniform control and optional stopping]
\label{prop:ville-inequality}
Let $(M_t)_{t\ge0}$ be a nonnegative supermartingale with respect to $(\mathcal F_t)_{t\ge0}$ such that $M_0=1$.
Then for any $c>0$,
\[
\mathbb P\!\left(\sup_{t\ge0} M_t \ge c\right)\le \frac1c.
\]
Equivalently, for any $\alpha\in(0,1)$,
\[
\mathbb P\!\left(\sup_{t\ge0} M_t \ge \frac1\alpha\right)\le \alpha.
\]
\end{proposition}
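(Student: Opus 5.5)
The plan is the standard proof of Ville's maximal inequality: optional stopping at the first crossing time of the level $c$, followed by a monotone limit over finite horizons. Fix $c>0$ and define the stopping time
\[
\tau:=\inf\{t\ge0:\ M_t\ge c\},\qquad \inf\emptyset=\infty .
\]
For each finite horizon $N$, the bounded stopping time $\tau\wedge N$ makes the stopped process $(M_{t\wedge\tau})_{t\ge0}$ a nonnegative supermartingale. Optional stopping for bounded stopping times then gives
\[
\mathbb E[M_{\tau\wedge N}]\le \mathbb E[M_0]=1 .
\]
Note that $\mathbb E[M_0]=1$ because $M_0=1$.

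Next I will bound the left side from below. On the event $\{\tau\le N\}=\{\max_{0\le t\le N}M_t\ge c\}$ we have $M_{\tau\wedge N}=M_\tau\ge c$. On the complement we use only $M_{\tau\wedge N}\ge0$. Hence
\[
c\,\mathbb P\!\left(\max_{0\le t\le N}M_t\ge c\right)\le \mathbb E\!\left[M_{\tau\wedge N}\mathbf 1_{\{\tau\le N\}}\right]\le \mathbb E[M_{\tau\wedge N}]\le 1 .
\]
This is the finite-horizon inequality. Nonnegativity is used exactly here, to discard the contribution from $\{\tau>N\}$.

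Finally I pass to $N\to\infty$. The events $\{\max_{t\le N}M_t\ge c\}$ increase in $N$ to $\{\exists t:\ M_t\ge c\}=\{\tau<\infty\}$. Continuity from below therefore gives $\mathbb P(\tau<\infty)\le 1/c$. The main obstacle, a minor one, is that this union equals $\{\sup_t M_t>c\}\cup\{\text{some }M_t=c\}$, which may be strictly smaller than $\{\sup_t M_t\ge c\}$, since the supremum can equal $c$ without being attained. I will handle this by applying the bound at every level $c'<c$ to get $\mathbb P(\sup_t M_t>c')\le 1/c'$. Letting $c'\uparrow c$, the events $\{\sup_t M_t>c'\}$ decrease to $\{\sup_t M_t\ge c\}$, so continuity from above yields $\mathbb P(\sup_t M_t\ge c)\le 1/c$. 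The equivalent form follows by setting $c=1/\alpha$ for $\alpha\in(0,1)$.

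If one prefers to avoid optional stopping, an alternative for the finite-horizon step is to decompose on the first-crossing events $\{\tau=k\}\in\mathcal F_k$. The supermartingale property gives $\mathbb E[M_N\mathbf 1_{\{\tau=k\}}]\le\mathbb E[M_k\mathbf 1_{\{\tau=k\}}]$, which is the wrong direction for this purpose. So I will use the stopped-process argument as planned, relying on the standard fact that a stopped supermartingale is again a supermartingale.
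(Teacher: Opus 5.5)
Your proof is correct and matches the paper's argument: first-passage time $\tau$, optional stopping at the bounded time $\tau\wedge n$, nonnegativity to discard $\{\tau>n\}$, then $n\to\infty$. Your extra step with levels $c'\uparrow c$ is a genuine improvement, because the paper simply asserts $\mathbb P(\sup_{t\ge0}M_t\ge c)=\mathbb P(\tau<\infty)$, and this equality can fail when the supremum equals $c$ without being attained; for example, a martingale climbing toward $c>1$ can survive forever with probability $1/c$, so that $\mathbb P(\tau<\infty)=0$ while $\mathbb P(\sup_t M_t\ge c)=1/c$.
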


\noindent\textit{Proof.} Standard; see \hyperref[appendix-A7]{Appendix A.7}.

\subsection{Shift-based stability}
\label{sec6.3-shift}

Shift-based reference structures formalize stability as invariance under time translation.  They are canonical in time
series and dynamical systems, and they sometimes provide a useful approximation for closed-loop systems operating
within a stable regime window. Here, repetition is encoded not by re-running a policy or design,
but by assuming the law itself is invariant under the shift operator.

\begin{proposition}[Stationarity/ergodicity implies stabilization of time averages]
\label{prop:shift-stabilization}
Let $(X_t)_{t\ge1}$ be a strictly stationary process with $\mathbb E|X_1|<\infty$ (in particular, the bounded case
\eqref{eq:bounded-X-sec6} satisfies this). Let $\mathcal I$ denote the shift-invariant $\sigma$-algebra. Then
\[
\bar X_n \xrightarrow{\mathrm{a.s.}} \mathbb E[X_1\mid \mathcal I].
\]
If the process is ergodic (i.e., $\mathcal I$ is trivial), then $\bar X_n\to \mathbb E[X_1]$ almost surely.
\end{proposition}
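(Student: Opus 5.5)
This is Birkhoff's pointwise ergodic theorem applied to the coordinate process, so the plan is to reduce to the canonical shift setting and then either cite Birkhoff \citep{Birkhoff1931,Walters1982,Krengel1985} or sketch its standard proof.

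\emph{Step 1: canonical reduction.} Replace $(X_t)$ by the coordinate process on $(\mathbb R^{\mathbb N},\mathcal B(\mathbb R^{\mathbb N}))$ equipped with its law. Strict stationarity means the left shift $T$ preserves this law. The event $\{\bar X_n \text{ converges}\}$ and the limit are path functionals. The invariant $\sigma$-algebra $\mathcal I$ is defined on path space. Hence it suffices to prove $\frac1n\sum_{k=0}^{n-1} f\circ T^k\to \mathbb E[f\mid\mathcal I]$ a.s., where $f(\omega)=\omega_1\in L^1$.

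\emph{Step 2: maximal ergodic inequality.} Put $S_n=\sum_{k=0}^{n-1}f\circ T^k$ and $M_n=\max(0,S_1,\dots,S_n)$. We show $\mathbb E[f\,\mathbf 1_{\{M_n>0\}}]\ge 0$. This follows from Garsia's short argument: $f\ge M_n-M_n\circ T$ on $\{M_n>0\}$, then integrate using $T$-invariance of the measure. This step is the main technical obstacle. Everything else is bookkeeping, so if citing Birkhoff is acceptable, I would simply cite it and omit the step.

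\emph{Step 3: convergence.} WLOG $\mathbb E[f\mid\mathcal I]=0$; otherwise replace $f$ by $f-\mathbb E[f\mid\mathcal I]$, which is $T$-invariant in its conditional mean. Let $\bar f=\limsup S_n/n$. This is $T$-invariant, hence $\mathcal I$-measurable. For $\varepsilon>0$ set $E=\{\bar f>\varepsilon\}\in\mathcal I$. Apply Step 2 to $(f-\varepsilon)\mathbf 1_E$ and let $n\to\infty$. This gives $\mathbb E[(f-\varepsilon)\mathbf 1_E]\ge0$. But $\mathbb E[f\mathbf 1_E]=\mathbb E[\mathbb E[f\mid\mathcal I]\mathbf 1_E]=0$, so $\mathbb P(E)=0$. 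Applying the same argument to $-f$ gives $\liminf S_n/n\ge0$ a.s. Together these yield convergence to $\mathbb E[X_1\mid\mathcal I]$.

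\emph{Step 4: ergodic case.} If $\mathcal I$ is trivial, then $\mathbb E[X_1\mid\mathcal I]=\mathbb E[X_1]$ a.s., which gives the second claim. Finally, bounded $X_t$ satisfying \eqref{eq:bounded-X-sec6} are integrable, so the result applies in the setting of \autoref{prop:unified-stabilization}. The limit $\theta=\mathbb E[X_1\mid\mathcal I]$ is $\mathcal I$-measurable. This illustrates the remark in \autoref{sec5.1-nonid} that a single time average reveals only the realized value of $\theta$.
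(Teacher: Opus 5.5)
Your proposal is correct and takes the same route as the paper, which simply invokes Birkhoff's pointwise ergodic theorem for the stationary integrable process and reads off the constant limit $\mathbb E[X_1]$ when $\mathcal I$ is trivial. Your optional self-contained sketch (Garsia's maximal inequality followed by the $\limsup$ argument on invariant sets) is the standard proof of that theorem and is sound. The only imprecise phrase is ``$T$-invariant in its conditional mean'': what matters is that $\mathbb E[f\mid\mathcal I]$ is itself $T$-invariant, so subtracting it shifts every ergodic average by exactly that term.
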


\noindent\textit{Proof.} See \hyperref[appendix-A8]{Appendix A.8} (Birkhoff's ergodic theorem; \citealp{Birkhoff1931,Walters1982,Krengel1985}).

\autoref{prop:shift-stabilization} is a shift-symmetry route to \hyperref[f:F1]{(\ref*{f:F1})} stabilization.  It highlights the regime dependence emphasized by the Principle. The limit is not, in general, a universal constant but rather the conditional expectation given the invariant $\sigma$-algebra that encodes the reference structure. In feedback-driven deployments, stationarity can fail due to policy updates or endogenous behavioral responses; in such cases, shift-based stability must be justified explicitly or replaced by an alternative reference structure that controls drift more directly (as in \autoref{sec6.2-filtration} and \autoref{sec6.5-design}). A concrete instance of this regime-dependent stabilization is given in \autoref{ex:nonergodic}.

\subsection{Policy--environment Regimes}
\label{sec6.4-policy-env}

Policy--environment reference structures make explicit that long-run frequency meaning in reinforcement learning is
regime-indexed: repetition corresponds to re-running the same fixed policy and environment kernel. This is the
formal content of statements of the form ``under the fixed pair $(\pi,\mathsf P)$'' in the RL literature
\citep{SuttonBarto2018,LattimoreSzepesvari2020}.

Let $(S_t)_{t\ge0}$ take values in a measurable state space $(\mathsf S,\mathcal S)$ and let $(\mathsf A,\mathcal A)$ be a measurable action space. Let $\pi(da\mid s)$ be a stationary Markov kernel from $(\mathsf S,\mathcal S)$ to $(\mathsf A,\mathcal A)$, and draw each $A_t$ conditionally from $\pi(\cdot\mid S_t)$ given the state and prior trajectory. The environment is a Markov transition/reward kernel from $(\mathsf S\times\mathsf A,\mathcal S\otimes\mathcal A)$ to $(\mathsf S\times[0,1],\mathcal S\otimes\mathcal B([0,1]))$, written
\[
\mathsf P(ds',dx\mid s,a),\qquad 0\le x\le 1,
\]
so that
\[
(S_{t+1},X_{t+1})\mid\{(S_t,A_t)=(s,a)\}
\sim \mathsf P(\cdot,\cdot\mid s,a).
\]
Let
\[
\mathcal F_t:=\sigma(S_0,A_0,X_1,S_1,\ldots,S_t,A_t)
\]
be the natural filtration and define the
one-step mean reward $r(s,a):=\mathbb E[X_{t+1}\mid S_t=s,A_t=a]\in[0,1]$. Assume the induced state--action process $(S_t,A_t)_{t\ge0}$ is a time-homogeneous Markov chain that is positive Harris recurrent and aperiodic, with invariant probability measure $\nu_{\pi,\mathsf P}$ on $\mathsf S\times\mathsf A$.

\begin{proposition}[Ergodic policy--environment regimes imply stabilization of average reward]
\label{prop:policy-stabilization}
Assume that under the fixed pair $(\pi,\mathsf P)$ the state-action process $(S_t,A_t)_{t\ge0}$ is a time-homogeneous
Markov chain that is positive Harris recurrent and aperiodic, with invariant probability measure
$\nu_{\pi,\mathsf P}$ on $\mathsf S\times\mathsf A$.
Then
\[
\bar X_n := \frac1n\sum_{t=1}^n X_t \xrightarrow{\mathrm{a.s.}}
\rho(\pi,\mathsf P)
:= \int_{\mathsf S\times\mathsf A} r(s,a)\,\nu_{\pi,\mathsf P}(ds,da),
\]
where $r(s,a):=\mathbb E[X_{t+1}\mid S_t=s,A_t=a]$.
\end{proposition}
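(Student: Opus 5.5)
The plan is to apply the reference--drift decomposition of \autoref{prop:unified-stabilization} with $\mathcal R$ trivial, so that $\mathcal H_t=\mathcal F_t$. The drift term will then be handled by the Harris ergodic theorem for the state--action chain. Concretely, set $m_t:=\mathbb E[X_t\mid\mathcal F_{t-1}]$ for $t\ge1$. The filtration $\mathcal F_{t-1}=\sigma(S_0,A_0,X_1,\ldots,S_{t-1},A_{t-1})$ contains $(S_{t-1},A_{t-1})$. By the kernel specification, $(S_t,X_t)$ given the past depends only on $(S_{t-1},A_{t-1})$ through $\mathsf P(\cdot,\cdot\mid S_{t-1},A_{t-1})$. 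Hence
\[
m_t=\int x\,\mathsf P(ds',dx\mid S_{t-1},A_{t-1})=r(S_{t-1},A_{t-1}).
\]
Here $r$ is taken as the measurable version $r(s,a)=\int x\,\mathsf P(ds',dx\mid s,a)$, which is well defined because $\mathsf P$ is a kernel and $x\in[0,1]$. I would state explicitly that the conditional law of $(S_{t},X_t)$ given all of $\mathcal F_{t-1}$ is $\mathsf P(\cdot\mid S_{t-1},A_{t-1})$, since this Markov property of the environment is what the setup asserts.

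By \autoref{prop:unified-stabilization}, it then suffices to show
\[
\frac1n\sum_{t=1}^n r(S_{t-1},A_{t-1})\xrightarrow{\mathrm{a.s.}}\rho(\pi,\mathsf P).
\]
This is a strong law for the bounded measurable function $r$ along the chain $(S_t,A_t)$. I would invoke the standard ergodic theorem for positive Harris recurrent chains (e.g.\ Meyn--Tweedie, Thm.~17.0.1): for every $f\in L^1(\nu)$ and \emph{every} initial distribution, $n^{-1}\sum_{t=0}^{n-1}f(Y_t)\to\int f\,d\nu$ almost surely. Aperiodicity is not needed for this step, and the hypothesis is simply stronger than required. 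Taking $f=r$, which is bounded and therefore in $L^1(\nu_{\pi,\mathsf P})$, gives the displayed limit. The index shift from $t-1$ to $t$ does not affect Ces\`aro limits.

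Combining the two steps, the innovations $d_t=X_t-m_t$ are bounded $\mathcal F_t$-martingale differences, and their averages vanish almost surely (this is the content of \autoref{prop:unified-stabilization}, via $\sum_t d_t/t$ converging by $L^2$-boundedness and Kronecker's lemma). Therefore $\bar X_n\to\rho(\pi,\mathsf P)$ almost surely. The limit is deterministic, reflecting the fact that a Harris recurrent chain has a trivial invariant $\sigma$-field.

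The main obstacle is making sure the Harris strong law holds from an arbitrary initial state, rather than only $\nu$-almost every starting point, as in a Birkhoff argument via \autoref{prop:shift-stabilization} applied to the stationary version. If I avoid citing Meyn--Tweedie directly, I would argue as follows. Under $\mathbb P_\nu$ the stationary chain is ergodic, because Harris recurrence implies that invariant sets are trivial, so Birkhoff yields convergence $\mathbb P_\nu$-a.s. Next, the event of convergence $C$ is shift-invariant, so $h(y):=\mathbb P_y(C)$ is a bounded harmonic function. For Harris recurrent chains, bounded harmonic functions are constant, and the constant is $1$ because $\int h\,d\nu=1$. This is exactly where Harris recurrence, not mere positive recurrence, is used. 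The remaining check is that $r$ is measurable and that the version of $m_t$ matches $r(S_{t-1},A_{t-1})$ almost surely, which is routine.
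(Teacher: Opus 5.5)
Your proposal is correct and follows the paper's proof of this statement essentially step for step. The paper also uses the Markov property to get $\mathbb E[X_{t+1}\mid\mathcal F_t]=r(S_t,A_t)$, then applies the Harris-chain ergodic theorem (it too cites Meyn--Tweedie) to the Ces\`aro average of this drift, and finally invokes \autoref{prop:unified-stabilization} with trivial $\mathcal R$. Your additions are refinements of that same argument, not a different route: you fix the kernel version of $r$ so that $\int r\,d\nu_{\pi,\mathsf P}$ is unambiguous, and you note that the Harris law of large numbers holds from every initial distribution without aperiodicity. One small correction: the paper handles the innovation term in \autoref{prop:unified-stabilization} by Azuma--Hoeffding plus Borel--Cantelli, not by your $L^2$/Kronecker argument, though both are valid.
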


\noindent\textit{Proof.} See \hyperref[appendix-A9]{Appendix A.9}; the positive-Harris ergodic theorem is given, for example, by \citet{MeynTweedie1993}. A concrete illustration is given in \autoref{ex:bandit}.

\subsection{Design-based Regimes}
\label{sec6.5-design}

Design-based reference structures treat the randomization mechanism as the probabilistic generator.  Repetition means
rerunning the same design (including any adaptivity and stopping rule), while holding fixed a finite population or a
schedule of potential outcomes \citep{Fisher1935,Rosenbaum2002,ImbensRubin2015}.  This is the classical logic of
randomization inference and remains central in A/B testing and adaptive experimentation.

Let $(Y_t(0),Y_t(1))_{t\ge1}$ be bounded potential outcomes with $0\le Y_t(0),Y_t(1)\le 1$ a.s.\ for all $t$, and define
the held-fixed $\sigma$-algebra
\[
\mathcal R := \sigma\bigl(\{Y_t(0),Y_t(1):t\ge1\}\bigr).
\]
Let $A_t\in\{0,1\}$ denote treatment assignment and define the observed outcome
\[
X_t := A_tY_t(1)+(1-A_t)Y_t(0),\qquad 0\le X_t\le1.
\]
Let $\mathcal F_t:=\sigma(A_1,X_1,\ldots,A_t,X_t)$ and $\mathcal H_t:=\mathcal R\vee\mathcal F_t$.
Assume the assignment mechanism is sequentially randomized:
\begin{equation}
\label{eq:design-rand-sec6}
\mathbb P(A_t=1\mid \mathcal H_{t-1}) = q_t,\qquad t\ge1,
\end{equation}
where $q_t\in[0,1]$ is $\mathcal F_{t-1}$-measurable (allowing adaptivity).

\begin{proposition}[Design-conditional drift stabilization implies stabilization of observed averages]
\label{prop:design-stabilization}
Under the setup above, define the design-conditional drift $m_t:=\mathbb E[X_t\mid\mathcal H_{t-1}]$.
Then
\[
m_t = q_t\,Y_t(1) + (1-q_t)\,Y_t(0)\qquad\text{a.s.}
\]
If there exists a random variable $\theta$ such that $\frac1n\sum_{t=1}^n m_t \to \theta$ almost surely, then
$\bar X_n\to\theta$ almost surely.
\end{proposition}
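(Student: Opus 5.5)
The plan has two parts: compute the design-conditional drift directly, then reduce the stabilization claim to Proposition~\ref{prop:unified-stabilization}.

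\emph{Drift formula.} The observed outcome is linear in the assignment:
\[
X_t=A_tY_t(1)+(1-A_t)Y_t(0).
\]
The potential outcomes $Y_t(0),Y_t(1)$ are $\mathcal R$-measurable, hence $\mathcal H_{t-1}$-measurable, and they are bounded. So they can be pulled out of the conditional expectation given $\mathcal H_{t-1}$. This gives
\[
m_t=\mathbb E[A_t\mid\mathcal H_{t-1}]\,Y_t(1)+\bigl(1-\mathbb E[A_t\mid\mathcal H_{t-1}]\bigr)Y_t(0)\qquad\text{a.s.}
\]
Since $A_t\in\{0,1\}$, we have $\mathbb E[A_t\mid\mathcal H_{t-1}]=\mathbb P(A_t=1\mid\mathcal H_{t-1})$. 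The sequential randomization assumption \eqref{eq:design-rand-sec6} says this equals $q_t$ almost surely, which yields the displayed formula $m_t=q_tY_t(1)+(1-q_t)Y_t(0)$.

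\emph{Stabilization.} I will invoke Proposition~\ref{prop:unified-stabilization} with reference $\sigma$-algebra $\mathcal R=\sigma(\{Y_t(0),Y_t(1)\})$ and augmented filtration $\mathcal H_t=\mathcal R\vee\mathcal F_t$. Its hypotheses hold as follows. $X_t$ is $\mathcal F_t$-measurable, because $\mathcal F_t$ includes $X_t$. We have $0\le X_t\le1$, as a convex combination (for $A_t\in\{0,1\}$) of values in $[0,1]$. The drift $m_t$ is exactly the $\mathcal H_{t-1}$-conditional mean used in that proposition. The innovations $d_t=X_t-m_t$ are then bounded $(\mathcal H_t)$-martingale differences, since $X_t$ is $\mathcal H_t$-measurable. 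The assumed Ces\`aro convergence $\frac1n\sum_{t\le n} m_t\to\theta$ is precisely \eqref{eq:cesaro-drift-sec6}, so $\bar X_n\to\theta$ almost surely.

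\emph{Main obstacle.} The only delicate point is a measurability and filtration subtlety. Proposition~\ref{prop:unified-stabilization} is stated with $X_t$ adapted to $(\mathcal F_t)$, while its martingale argument runs along $(\mathcal H_t)$. I need $(\mathcal H_t)$ to be a genuine filtration, which holds because $\mathcal F_t$ is increasing. I also need $X_t\in\mathcal H_t$, so that $\mathbb E[d_t\mid\mathcal H_{t-1}]=0$ and the bounded-increment martingale strong law (e.g.\ via $\sum_t t^{-2}\mathbb E[d_t^2]<\infty$ and Kronecker's lemma) applies. Once this is checked, no further work is required: the design assumption enters only through the identification of $m_t$.
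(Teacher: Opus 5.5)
Your proposal is correct and matches the paper's proof step for step: you pull the $\mathcal R$-measurable potential outcomes out of the conditional expectation, identify $\mathbb E[A_t\mid\mathcal H_{t-1}]=q_t$ by sequential randomization, and then apply Proposition~\ref{prop:unified-stabilization} with reference $\sigma$-algebra $\mathcal R$ and $\mathcal H_t=\mathcal R\vee\mathcal F_t$. Your additional checks, which the paper leaves implicit, are all sound: boundedness for the pull-out step, $X_t$ being $\mathcal H_t$-measurable, $(\mathcal H_t)$ being a filtration, and the observation that the stabilization part does not use the design assumption.
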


\noindent\textit{Proof.} See \hyperref[appendix-A10]{Appendix A.10}. (Apply \autoref{prop:unified-stabilization} with $\mathcal R$ as
defined above.)

\begin{proposition}[Design-based martingale innovations]
\label{prop:design-martingale}
Under the design setup above, define $d_t:=X_t-m_t$ with $m_t=\mathbb E[X_t\mid\mathcal H_{t-1}]$.
Then $(d_t)_{t\ge1}$ is a martingale difference sequence with respect to $(\mathcal H_t)$:
\[
\mathbb E[d_t\mid \mathcal H_{t-1}] = 0 \qquad \text{a.s.\ for all }t\ge1.
\]
If an observable nonnegative supermartingale or e-process is separately shown to be valid under a stated null and the design law, Ville's inequality supplies time-uniform control. Constructing such a process requires additional null, measurability, and moment or conditional-MGF conditions \citep{Ville1939,HowardRamdasMcAuliffeSekhon2020}.
\end{proposition}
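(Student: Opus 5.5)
The core claim is that $(d_t)$ is an $(\mathcal H_t)$-martingale-difference sequence. I will verify this directly from the definition of $m_t$ as a conditional expectation, using the tower property. Before doing so, I check the basic measurability and integrability facts.

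First, $\mathcal H_t=\mathcal R\vee\mathcal F_t$ is increasing in $t$, because $\mathcal F_t$ is. So $(\mathcal H_t)_{t\ge0}$ is a filtration.

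Next, $X_t=A_tY_t(1)+(1-A_t)Y_t(0)$ is $\mathcal H_t$-measurable. The potential outcomes are $\mathcal R$-measurable, and $A_t$ is $\mathcal F_t$-measurable. Hence $d_t=X_t-m_t$ is $\mathcal H_t$-measurable, since $m_t$ is $\mathcal H_{t-1}\subseteq\mathcal H_t$-measurable.

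Boundedness gives integrability. Because $0\le X_t\le1$, we also have $0\le m_t\le1$ a.s., so $|d_t|\le1$ and every conditional expectation below is well defined.

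The key step is then the one-line computation
\[
\mathbb E[d_t\mid\mathcal H_{t-1}]=\mathbb E[X_t\mid\mathcal H_{t-1}]-m_t=m_t-m_t=0\quad\text{a.s.},
\]
which uses linearity and the fact that $m_t$ is $\mathcal H_{t-1}$-measurable ("taking out what is known"). No use of the randomization assumption \eqref{eq:design-rand-sec6} is needed for this part. That assumption only serves to identify $m_t$ explicitly, as in \autoref{prop:design-stabilization}: $m_t=q_tY_t(1)+(1-q_t)Y_t(0)$. I would mention this identification as a consistency check, but it is not required.

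For the second sentence of the statement, I would not attempt a general construction, since the statement explicitly makes it conditional. Suppose an observable process $(M_t)$ is shown to be a nonnegative supermartingale with $M_0=1$, with respect to $(\mathcal H_t)$ or $(\mathcal F_t)$ under the stated null and the design law. Then \autoref{prop:ville-inequality} applies verbatim and gives $\mathbb P(\sup_t M_t\ge1/\alpha)\le\alpha$. Optional stopping follows because the bound is uniform over $t$.

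The only subtle point, and hence the main obstacle, is the observability remark. The martingale structure of $d_t$ lives on $\mathcal H_t$, which contains unobserved potential outcomes. To obtain an $\mathcal F_t$-adapted test process one needs, for example, inverse-propensity increments of the form $A_tX_t/q_t-(1-A_t)X_t/(1-q_t)$ under a sharp null. This requires $q_t\in(0,1)$ bounded away from the endpoints, or conditional-MGF bounds, to form an exponential supermartingale. I would flag these as the additional conditions referenced in the statement rather than prove them.
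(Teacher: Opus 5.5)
Your proposal is correct and follows the paper's proof. The martingale-difference property is exactly the computation $\mathbb E[X_t-m_t\mid\mathcal H_{t-1}]=\mathbb E[X_t\mid\mathcal H_{t-1}]-m_t=0$, which, as you note, does not use the randomization assumption. The time-uniform part is likewise just Ville's inequality applied to an already-constructed nonnegative supermartingale, with optional stopping handled via $\{T<\infty,\,M_T\ge c\}\subseteq\{\sup_t M_t\ge c\}$. Your extra measurability and integrability checks, and your remark on observable inverse-propensity increments, are sound additions that the paper leaves implicit.
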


\noindent\textit{Proof.} See \hyperref[appendix-A11]{Appendix A.11}. The implications for adaptive A/B testing are illustrated in \autoref{ex:ab}.

\subsection{Fixed-bin calibration from conditional-mean correctness}
\label{sec6.6-fixed-bin-calibration}

A calibration-type guarantee \hyperref[f:F3]{(\ref*{f:F3})} can be obtained from the same drift--innovation logic. If forecasts equal the governing-law conditional means, calibration follows against a \emph{fixed, predictable} checking rule, provided the rule is triggered infinitely often. The point is not to claim calibration as an unconditional frequency property,
but to isolate precisely what this correctness condition delivers and what additional regime structure is still needed
for long-run performance interpretations in adaptive systems.

\medskip

Consider the filtration-based special case where the reference structure is trivial, so
$\mathcal H_t=\mathcal F_t$ and $m_t=\mathbb E[X_t\mid\mathcal F_{t-1}]$.
Assume a probabilistic forecast is conditionally correct under $\mathbb P$:
\[
p_{t-1}:=\mathbb E_{\mathbb P}[X_t\mid\mathcal F_{t-1}]
\qquad\text{(Section~\ref{sec3.3-sequential}).}
\]
Fix a Borel bin $I\subset[0,1]$ and define the predictable checking rule
\[
W_t := \mathbf 1\{p_{t-1}\in I\}\in\{0,1\},
\qquad\text{so that }W_t\text{ is }\mathcal F_{t-1}\text{-measurable.}
\]
Let $N_n(I):=\sum_{t=1}^n W_t$ be the number of visits to bin $I$, and define the binwise empirical frequency and
binwise average forecast (whenever $N_n(I)>0$) by
\[
\widehat{x}_n(I):=\frac{1}{N_n(I)}\sum_{t=1}^n W_t X_t,
\qquad
\widehat{p}_n(I):=\frac{1}{N_n(I)}\sum_{t=1}^n W_t p_{t-1}.
\]

\begin{proposition}[Fixed-bin calibration from conditional-mean correctness, on infinite visitation]
\label{prop:fixed-bin-calibration}
Assume $X_t\in[0,1]$ $\mathbb P$-almost surely and let $p_{t-1}=\mathbb E_{\mathbb P}[X_t\mid\mathcal F_{t-1}]$.
For a fixed Borel bin $I\subset[0,1]$, define $W_t$ and $N_n(I)$ as above. Then the binwise calibration gap admits the ratio form
\begin{equation}
\label{eq:binwise-gap}
\widehat{x}_n(I)-\widehat{p}_n(I)
=
\frac{\sum_{t=1}^n W_t(X_t-p_{t-1})}{\sum_{t=1}^n W_t},
\end{equation}
and, on the event $\{N_n(I)\to\infty\}$,
\begin{equation}
\label{eq:binwise-cal}
\widehat{x}_n(I)-\widehat{p}_n(I)\xrightarrow{\mathrm{a.s.}}0.
\end{equation}
Equivalently, evaluation-law conditional correctness implies calibration on the fixed bin $I$ on the event of infinite visitation.
\end{proposition}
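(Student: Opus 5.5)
The ratio form \eqref{eq:binwise-gap} is pure algebra. On $\{N_n(I)>0\}$ both $\widehat x_n(I)$ and $\widehat p_n(I)$ share the denominator $N_n(I)=\sum_{t\le n}W_t$, so subtracting the numerators gives the displayed identity. The substantive claim is \eqref{eq:binwise-cal}, which I would reduce to a martingale strong law for a predictably weighted sum normalized by its own predictable increasing process.

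First, set $d_t:=X_t-p_{t-1}$. By conditional-mean correctness, $\mathbb E_{\mathbb P}[d_t\mid\mathcal F_{t-1}]=0$ and $|d_t|\le1$. Because $W_t=\mathbf 1\{p_{t-1}\in I\}$ is $\mathcal F_{t-1}$-measurable (Borel $I$, and $p_{t-1}$ is $\mathcal F_{t-1}$-measurable), the sequence $(W_td_t)$ is again a bounded martingale-difference sequence. Hence
\[
M_n:=\sum_{t=1}^n W_td_t
\]
is a martingale with predictable quadratic variation $\langle M\rangle_n=\sum_{t\le n}W_t\,\mathbb E[d_t^2\mid\mathcal F_{t-1}]\le N_n(I)$.

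Second, I would show $M_n/N_n(I)\to0$ a.s.\ on $\{N_n(I)\to\infty\}$. The cleanest route is to consider
\[
\widetilde M_n:=\sum_{t=1}^n \frac{W_td_t}{1\vee N_t(I)}.
\]
The weight $1/(1\vee N_t(I))$ is $\mathcal F_{t-1}$-measurable, since $N_t(I)=N_{t-1}(I)+W_t$ and $W_t$ is predictable, so $\widetilde M_n$ is a martingale. Its conditional variance increments are bounded by $W_t/N_t(I)^2$ on visits. Along the visit times $N_t$ runs through $1,2,3,\dots$, so the total is at most $\sum_k k^{-2}<\infty$, giving $\sup_n\mathbb E[\widetilde M_n^2]\le\pi^2/6$. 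Doob's $L^2$ martingale convergence theorem then gives that $\widetilde M_n$ converges a.s.

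On $\{N_n(I)\to\infty\}$, Kronecker's lemma applies with the nondecreasing divergent normalizers $b_t=1\vee N_t(I)$. Steps with $W_t=0$ contribute zero to both the weighted series and to $M_n$, so the repeated normalizer values cause no difficulty. Kronecker's lemma yields $M_n/N_n(I)\to0$, which is \eqref{eq:binwise-cal} by the ratio form. Alternatively, I could cite the standard result that $M_n/\langle M\rangle_n\to0$ on $\{\langle M\rangle_\infty=\infty\}$ and treat $\{\langle M\rangle_\infty<\infty\}$ separately, where $M_n$ converges and division by $N_n\to\infty$ kills it. I prefer the self-normalized Kronecker route because it avoids the degenerate case $\mathbb E[d_t^2\mid\mathcal F_{t-1}]=0$.

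The main obstacle is the Kronecker step with random normalizers. I must check that predictability of $W_t$ makes $N_t(I)$ predictable, so the weighted sum is genuinely a martingale. I must also check that the lemma is applied pathwise, on the event where $N_n(I)\to\infty$ and $\widetilde M_n$ converges. This is the precise point where the fixed, predeclared nature of $I$ matters. For a data-dependent bin, $W_t$ need not be predictable and the argument fails, consistent with Remark~\ref{rem:vitl-vs-calibration}.
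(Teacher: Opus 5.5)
Your proof is correct and is essentially the paper's argument. Your weights $W_t/(1\vee N_t(I))$ coincide with the paper's $a_t=W_t/(N_{t-1}(I)+1)$, and the paper likewise bounds $\mathbb E[L_n^2]$ by $\sum_k k^{-2}$, invokes $L^2$ martingale convergence, and finishes with Kronecker's lemma. The only difference is cosmetic: the paper applies Kronecker along the visit times $\tau_k$ with normalizers $k$, whereas you apply it directly with the nondecreasing random normalizers $1\vee N_t(I)$, which is equally valid.
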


\noindent\textit{Proof.} See Appendix~\ref{appendix-A12}.

\medskip

\autoref{prop:fixed-bin-calibration} is a direct consequence of conditional-mean correctness under the governing law.
Since $W_t$ is predictable and $\mathbb E[X_t-p_{t-1}\mid\mathcal F_{t-1}]=0$, the numerator in
\eqref{eq:binwise-gap} is a martingale transform of the innovation sequence with bounded increments.
Thus, whenever the denominator diverges (i.e., the bin is visited infinitely often), martingale strong-law
arguments force the binwise calibration gap to vanish.

The content of the proposition is therefore intentionally conditional and limited.
First, correctness of the forecasts as governing-law conditional means does \emph{not} ensure the visit condition $N_n(I)\to\infty$.
In adaptive systems, the forecasting rule and decision policy can evolve in response to outcomes,
which can change the distribution of forecasts and hence the set of bins that are visited frequently.
A bin may be visited only finitely often---or visited in bursts separated by long gaps---without violating conditional-mean correctness.
This is the Role Separation Principle in microcosm: evaluation-law conditional correctness supplies martingale cancellation, but it does not supply a regime-level repetition
structure under which calibration can be asserted unconditionally.

Second, \autoref{prop:fixed-bin-calibration} should not be confused with \emph{uniform} calibration over rich
families of bins or data-dependent checking rules. Uniform (or online) calibration typically requires
additional structure or explicit calibration constructions; see, for example, the prequential formulation
in \citet{Dawid1982} and online calibration results in \citet{FosterVohra1998}.

\subsection{Synthesis as a Theorem-Level Map from Reference Structure to Frequency Meaning}
\label{sec6.7-synthesis}

The results above refine the schematic implication \eqref{eq:sep-summary} into an operational template.
The reference--drift decomposition \eqref{eq:drift-innovation-sec6} is universal; the substantive content is the
choice of reference structure and the corresponding drift stabilization claim.

\medskip
\noindent
\begingroup
\setlength{\fboxsep}{6pt}%
\fbox{%
\begin{minipage}{\dimexpr\linewidth-2\fboxsep-2\fboxrule\relax}
\small
\noindent\textbf{Unified stabilization template (reference--drift decomposition).}
Choose (i) a statistic $X_t$, (ii) held-fixed reference information $\mathcal R$, and (iii) a governing law or protocol with the target-specific stability assumption. Define $\mathcal H_t=\mathcal R\vee\mathcal F_t$ and $m_t=\mathbb E_{\mathbb P}[X_t\mid\mathcal H_{t-1}]$. Then

\begin{center}
\begin{adjustbox}{max width=.985\linewidth}
$\textstyle
\underbrace{\frac1n\sum_{t=1}^n (X_t-m_t)\to 0}_
{\substack{\text{\scriptsize innovation cancellation}\\[-0.15em]
\text{\scriptsize (martingale; filtration-level)}}}
\;+\;
\underbrace{\frac1n\sum_{t=1}^n m_t\to\theta}_
{\substack{\text{\scriptsize drift stabilization}\\[-0.15em]
\text{\scriptsize (reference structure; regime-level)}}}
\;\Longrightarrow\;
\underbrace{\bar X_n\to\theta}_{\hyperref[f:F1]{(\ref*{f:F1})}}
$
\end{adjustbox}
\end{center}

If an observable nonnegative supermartingale or e-process is valid under a specified null and reference law, Ville's inequality supplies time-uniform control \citep{Ville1939,HowardRamdasMcAuliffeSekhon2020}. Constructing such a process requires additional null, measurability, and moment or conditional-MGF conditions. A limited fixed-bin calibration guarantee of type \hyperref[f:F3]{(\ref*{f:F3})}, conditional on sufficient visitation, follows from \autoref{prop:fixed-bin-calibration}.
\end{minipage}%
}%
\endgroup
\medskip

Nevertheless, even when stabilization holds under a stated reference structure, identification remains separate
(\autoref{sec5.1-nonid}) because distinct laws can share the same value of a chosen stabilized statistic or the same proper coarsening of its limiting distribution while differing in finite-horizon predictive content or tail risk. Alternative-policy behavior requires a separate causal or structural model.

With this stabilization framework in place, \autoref{sec7-guidance} returns to adaptive and AI-mediated deployments.
There, the operative reference structure is often implicit, partially observed, or itself shaped by the algorithm.
The Principle therefore becomes a reporting discipline in which any claimed stabilization, reliability, or calibration statement must specify the filtration under which probabilities are interpreted and the regime-level reference structure under which the intended frequency meaning is asserted.

\subsection{Relation to Prequential, Adaptive, and Conformal Frameworks}
\label{sec6.8-relation}

Our framework complements prequential and game-theoretic probability, which emphasize sequential scoring rules, betting protocols, and supermartingale tests for pathwise or time-uniform validity \citep{Dawid1982,Dawid1984,Dawid2006,ShaferVovk2001}. Dawid's prequential program is especially close in spirit because it evaluates forecasts through their realized sequential performance, while game-theoretic probability makes the testing protocol explicit by representing evidence as the evolution of a betting capital process. In the language of this paper, these traditions provide powerful filtration-level and pathwise tools; our additional emphasis is on the regime-level question of what repetition structure licenses a frequency interpretation of the resulting forecasts or tests.

Adaptive-data-analysis and post-selection frameworks are related in that they explicitly specify the data-reuse or selection regime under which validity is assessed \citep{DworkEtAl2015,FithianSunTaylor2014,LeeSunSunTaylor2016}. Conformal prediction offers a canonical example of a reference-structure guarantee: standard conformal prediction obtains finite-sample marginal coverage from exchangeability, while work beyond exchangeability modifies the regime through weighting or related devices to accommodate distributional drift \citep{ShaferVovk2008,AngelopoulosBates2021,BarberCandesRamdasTibshirani2023}.

Recent AI reliability studies add complementary motivation. Long-context evaluations show position-dependent use of available information \citep{Liu2024}; multi-turn evaluations show conversational degradation \citep{Laban2025}; and context-drift work uses turn-wise KL divergence between a test model's token-level predictive distribution and that of a goal-consistent reference model \citep{DongreEtAl2025}. These studies document phenomena to which the present framework can be applied diagnostically.

Taken together, these literatures reinforce the central message of this paper: probabilistic meaning in adaptive regimes is inherently regime-conditional. The novelty of the present framework is to make that conditionality the central mathematical object. Instead of adding a new sequential-validity tool or an empirical drift metric, the paper specifies the reference-structure conditions under which filtration-level probability statements can support frequency-level claims. This connects sequential forecasting, adaptive inference, conformal validity, and AI reliability within a common regime-conditional framework.
\section{Interpretive Guidance for Adaptive and AI-mediated Regimes}
\label{sec7-guidance}

With the structural framework established, we turn to its implications for modern adaptive systems. This section translates the Principle into interpretive guidance for applied work in adaptive and AI-mediated settings.
In particular, we treat an ``adaptive environment'' as a protocol in which (i) the environment may adapt to specified
aspects of the past, (ii) the forecaster or decision rule may be deterministic or randomized, and (iii) all validity
statements are formulated relative to an explicit filtration. This protocol-level framing aligns reference structures with
standard notions in online learning, calibration games, and anytime-valid sequential inference.

Our goal is not to propose new algorithms, but to make reliability claims \emph{auditable} by clarifying (i) what
probabilistic object is being invoked, (ii) what repetition mechanism is being assumed, and (iii) which frequency
meaning is actually warranted under the stated structure.

We focus on three recurring sites of confusion: (a) high-confidence errors in generative AI, often described as ``hallucinations'',
(b) calibration claims for probabilistic forecasts, and (c) validity claims for synthetic data. In each case, the
same diagnostic question applies: \emph{what is the filtration-level probability statement, and what reference structure
is being used to justify its intended frequency meaning?}

\subsection{A diagnostic taxonomy: incoherence versus reference-structure failure}
\label{sec7.1-taxonomy}

The Principle can be used as a taxonomy for reliability failures in adaptive systems.  Two failure modes
are conceptually distinct and have different remedies.

\medskip
\noindent\textbf{Mode 1: coherence failures (filtration-level meaning is unclear).}
A coherence failure occurs when a collection of reported assessments cannot be represented by one common probability law on the declared event algebra and information set while respecting logical relations among events. A lone scalar score need not reveal incoherence; it may instead be mismatched to the declared target event or information set. Operational symptoms can include incompatible assessments for related events or time-inconsistent updates (\autoref{sec3-coherence-forecast}).

\medskip
\noindent\textbf{Mode 2: reference-structure failures (frequency meaning is unclear).}
A system can be internally coherent under an assessment law $\mathbb Q$ and yet unreliable under deployment law $\mathbb P$ because the laws differ, the frequency claim lacks a credible repetition mechanism, or both. In the unified stabilization framework, the structural
requirement is drift control: a frequency claim typically presupposes stabilization of the reference drift
$m_t=\mathbb{E}_{\mathbb P}[X_t\mid \mathcal{R}\vee\mathcal{F}_{t-1}]$ or, for time-uniform guarantees, a design/sequential
structure supporting supermartingale control (\autoref{sec6-beyond-exch}).
Mode~2 failures are failures of \emph{regime specification}. A concrete illustration of this regime-specification failure is given in \autoref{ex:mode2}.

In applications, a useful first step is to distinguish these modes.  Addressing a Mode~1 failure typically requires clarifying the probabilistic object and the filtration with respect to which it is conditioned.  Addressing a Mode~2 failure, which is the primary focus here, requires declaring and defending a reference structure that makes the intended frequency interpretation meaningful.

\subsection{Hallucinations as event-space mismatch and regime drift}
\label{sec7.2-hallucinations}

The term ``hallucination'' is used broadly to describe fluent but unsupported outputs in neural text generation and LLMs \citep{Ji2023}.  In this paper, hallucination is not treated as a psychological metaphor, but as
a mismatch between a probabilistic object and the frequency interpretation implicitly demanded of it.

Consider an interactive system used sequentially.  At time $t$, the system has access to a history filtration
$\mathcal{F}_t$ that may include prompts, retrieved documents, tool outputs, and its own past actions.  It produces an
output $O_{t+1}$ and possibly a scalar ``confidence'' $p_t\in[0,1]$ that users interpret as a probability of a
correctness event
\[
X_{t+1} := \mathbf{1}\{\text{$O_{t+1}$ is correct under a stated evaluation rule}\}.
\]
A coherence interpretation would require an assessment law $\mathbb Q$ under which $p_t=\mathbb E_{\mathbb Q}[X_{t+1}\mid\mathcal F_t]$, or a stated decision problem under which the score is Bayes-optimal. By contrast, statements such as ``the model is
calibrated'' or ``confidence reflects correctness frequency'' are frequency statements of type
\hyperref[f:F3]{(\ref*{f:F3})} and require a declared reference structure defining repetition and stability.

The Principle highlights two structural mismatches relevant to some high-confidence unsupported outputs. It does not claim that they exhaust the empirical causes of hallucinations.

\medskip
\noindent\textbf{(H1) Event-space mismatch: a coherent text model is not automatically a correctness model.}
Many generative systems are trained to model text. An autoregressive distribution over strings does not by itself identify or supply a conditional probability of correctness. Such a probability requires a jointly specified model containing external truth or evaluation variables and the mapping from output to correctness. A coherent probability can be defined on such an extended space, but high string likelihood alone is not high truth probability. \autoref{ex:halluc-cite} gives an illustration.

\medskip
\noindent\textbf{(H2) Regime drift: even if correctness is modeled, frequency meaning depends on stability.}
Suppose first that a system outputs the exact evaluation-law conditional mean $p_t=\mathbb E_{\mathbb P}[X_{t+1}\mid\mathcal F_t]$. Then \eqref{eq:vitl} follows under $\mathbb P$. For an approximate score $\widetilde p_t$, the average approximation error $n^{-1}\sum_{t<n}(p_t-\widetilde p_t)$ must also be controlled. Even under exact conditional correctness, long-run correctness
frequencies still depend on the regime-level drift term in the decomposition
\eqref{eq:drift-innovation-sec6}.  In interactive deployments, prompt distributions evolve, users adapt to prior model
outputs, and the model itself may be updated.  These are precisely the mechanisms that can prevent drift stabilization.
In such regimes, ``calibration'' cannot be treated as a model-internal property; it is a joint property of the coupled
system generating $(p_t,X_{t+1})$ under a stated reference structure. This point aligns with evidence that long-context and multi-turn models can use available context unreliably and that a test model can exhibit turn-wise KL divergence from a goal-consistent reference model \citep{Liu2024,Laban2025,DongreEtAl2025}.

The practical implication is that reducing high-confidence errors is not simply a matter of ``making probabilities coherent.''
It requires aligning the probabilistic object with the event and filtration users intend (addressing (H1)), and it
requires evaluating the system under an explicit and credible reference structure (addressing (H2)).

\subsection{Calibration claims: what can be asserted, and under what assumptions}
\label{sec7.3-calibration}

Calibration is often treated as a canonical reliability criterion for probabilistic forecasts
\citep{Dawid1982,GneitingRaftery2007}.  In the language of \autoref{sec2.3-frequency}, calibration is a frequency claim
of type \hyperref[f:F3]{(\ref*{f:F3})}: predicted probabilities should match empirical frequencies within a specified
reference class of checking rules.  In adaptive deployments, two over-interpretations are especially common, and
\autoref{prop:fixed-bin-calibration} provides a useful anchor for what evaluation-law conditional correctness can and cannot justify.

\medskip
\noindent\textbf{(C1) Validity in the large does not imply unconditional or rich-class calibration.}
Evaluation-law conditional correctness yields \eqref{eq:vitl}, controlling a single global discrepancy along the realized
trajectory.  This is generally insufficient for binwise or conditional calibration, because calibration evaluates
discrepancies on data-dependent subsets of time (\autoref{rem:vitl-vs-calibration}).
For a fixed predictable check there is a limited statement. By \autoref{prop:fixed-bin-calibration}, for any fixed Borel bin $I\subset[0,1]$ and $W_t=\mathbf 1\{p_{t-1}\in I\}$, the binwise calibration gap satisfies
$\widehat{x}_n(I)-\widehat{p}_n(I)\xrightarrow{\mathrm{a.s.}}0$ on $\{N_n(I)\to\infty\}$. This is an \hyperref[f:F3]{(\ref*{f:F3})}-type conclusion, but it is deliberately conditional and non-uniform because it concerns a pre-declared check and requires that the corresponding denominator diverge. The distinction between global innovation cancellation and binwise calibration failure is illustrated in \autoref{ex:global-not-cal}.

\medskip
\noindent\textbf{(C2) ``The model is calibrated'' is incomplete without a checking class and a regime.}
Even in static settings, calibration depends on what is checked and how forecasts are binned.  In adaptive settings, the
additional subtlety is that the visit behavior of the checks is regime-dependent. In particular, conditional-mean correctness
does not ensure the condition $N_n(I)\to\infty$ in \autoref{prop:fixed-bin-calibration}. Feedback, policy updates, user
adaptation, or model updates can shift the distribution of forecasts and render some bins rarely (or only finitely)
visited.  Thus, calibration cannot be treated as a purely model-internal attribute; it is a joint property of the coupled
process generating $(p_t,X_{t+1})$ under a declared reference structure.

A useful reporting discipline is therefore to state calibration claims in the form
\[
\text{``$(p_t,X_{t+1})$ is calibrated with respect to checking class $\mathcal C$ under reference structure $\mathfrak R$.''}
\]
Here, $\mathcal C$ may encode binning rules, group membership, context restrictions, abstention/selection rules, or other
evaluation protocols, and $\mathfrak R$ specifies what is held fixed under ``repetition'' (e.g., a frozen model window, a
fixed logging policy, a stationary environment, or a design-based audit cohort).  The practical implications of this
regime-conditional calibration discipline are illustrated in \autoref{ex:selective}.

\medskip

Finally, \autoref{prop:filtration-stabilization} provides a precise bridge to frequency meaning of type
\hyperref[f:F1]{(\ref*{f:F1})}. If evaluation-law conditionally correct one-step means stabilize ($p_t\to p_\infty$ a.s.), then the empirical average
stabilizes ($\bar X_n\to p_\infty$ a.s.).  This supports a coarse sense in which long-run outcomes match stabilized
beliefs.  However, stabilization of $(p_t)$ does not by itself imply calibration
\hyperref[f:F3]{(\ref*{f:F3})} over nontrivial checking classes.  Outside the fixed-check setting of
\autoref{prop:fixed-bin-calibration}, binwise or conditional calibration typically requires additional assumptions that
restrict the checking protocol or control adaptivity, as in prequential calibration theory \citep{Dawid1982} and online
calibration constructions \citep{FosterVohra1998}.  The key message is that calibration claims must always specify (i) the
checking class and (ii) the reference regime under which the denominators and frequencies are intended to be stable.

\subsection{Synthetic data: coherence of the generator is not validity of inference}
\label{sec7.4-synth}

Synthetic data generation sharpens the Principle by making the probabilistic generator explicit.
Let $\mathbb{P}$ denote the (typically unknown) law of the real data, and let $\mathbb{Q}$ denote the law induced by a
synthetic generator.  Drawing synthetic datasets corresponds to sampling from $\mathbb{Q}$; repeated-sampling
properties of analyses on synthetic data are therefore frequency statements under $\mathbb{Q}$, not automatically under
$\mathbb{P}$ \citep{Rubin1993,Reiter2005,Drechsler2011}.

Two separations explain why synthetic data can fail structurally despite being produced by a coherent generator.

\medskip
\noindent\textbf{(S1) Identification separation: matching summaries does not identify the law.}
As shown in \autoref{sec5.1-nonid}, coarse frequency behavior does not uniquely determine the probability law.
In practice, synthetic data is often validated by matching a finite collection of marginals or low-order summaries.
Passing these checks is not sufficient to guarantee preservation of conditional structure needed for downstream
inference (effects, tail behavior, dependence relevant for uncertainty quantification).  Frequency matching is a
projection, not a definition, of the underlying law. A concrete case showing how matched summaries can conceal distortions in conditional structure appears in \autoref{ex:synth-cond}.

\medskip
\noindent\textbf{(S2) Reference-structure separation: validity is a cross-regime claim.}
Statements such as ``inference on synthetic data has correct coverage for the real-world estimand'' are frequency
claims of type \hyperref[f:F2]{(\ref*{f:F2})} that require a declared replication
scheme connecting $\mathbb{Q}$ to $\mathbb{P}$.  One must state what is held fixed (training data, privacy mechanism,
synthesizer architecture, analyst behavior) and what is repeated (synthetic draws, retraining, adaptive release
schedule).  In adaptive pipelines, the generator itself may drift (e.g., periodic retraining), and synthetic data may
enter feedback loops that alter future data collection, undermining the stability required for frequency interpretation. An example highlighting how generator updates and feedback loops compromise cross-regime validity appears in \autoref{ex:synth-drift}.

The methodological implication is that synthetic data does not eliminate the need for reference structures; it makes
them unavoidable.  Validity claims should be stated as regime-conditional statements that (i) define the target
estimand and evaluation protocol, (ii) specify the generator and training/retraining mechanism, and (iii) articulate
the stability route under which the intended frequency meaning is justified.

\subsection{Practical criteria for articulating and reporting probabilistic guarantees}
\label{sec7.5-criteria}

We conclude with a compact set of criteria that operationalize the Principle as a structural discipline for interpreting and evaluating probabilistic claims in adaptive regimes. These criteria render such claims explicitly regime-conditional and therefore open to coherent scrutiny and empirical audit. For auditability, any such claim should state (i) the target event/statistic and evaluation rule, (ii) the filtration or effective information set, (iii) the probabilistic object and law, (iv) the intended frequency meaning (\hyperref[f:F1]{(\ref*{f:F1})}--\hyperref[f:F3]{(\ref*{f:F3})}) and checking class when relevant, (v) the reference structure defining repetition, (vi) the theorem-level stability route, and (vii) the conditions whose violation would invalidate the claim. The same seven items appear in the concise checklist below and in expanded form in \hyperref[app:reporting-checklist]{Appendix~D.2}.
\begin{center}
\fbox{\parbox{0.9\linewidth}{
\textbf{Regime-Conditional Audit Checklist}
\begin{enumerate}
\item What target event, statistic, and evaluation rule are declared?
\item What filtration or effective information set is used?
\item What probabilistic object is reported, and under which law?
\item What frequency meaning and, for calibration, what checking class are claimed?
\item What reference structure defines repetition?
\item What invariance or controlled-drift assumptions and theorem-level mechanism justify the interpretation?
\item What violations would invalidate the claim?
\end{enumerate}
}}
\end{center}
\hyperref[appendix-E]{Appendix E} provides two recommender-system illustrations covering policy-value stabilization and fixed-bin calibration under a declared audit regime.

\section{Conclusion}
\label{sec8-conclusion}

Modern adaptive systems do not weaken probability; they expose its structure. In textbook i.i.d.\ settings, internal assessment, evaluation-law correctness, and several frequency targets are often studied under one stable regime, although they remain logically distinct. Adaptive feedback makes their separation unavoidable. Probability serves as a filtration-relative assessment language and, with a stated loss or utility, supports decisions; empirical claims such as stabilization, calibration, and repeated-sampling validity require further law- and regime-specific premises.

This message connects prequential forecasting and calibration \citep{Dawid1982,Dawid1984,Dawid2006,FosterVohra1998}, game-theoretic and martingale validity \citep{ShaferVovk2001,HowardRamdasMcAuliffeSekhon2020}, adaptive data analysis \citep{DworkEtAl2015}, conformal prediction \citep{ShaferVovk2008,AngelopoulosBates2021,BarberCandesRamdasTibshirani2023}, and empirical AI-reliability studies \citep{Ji2023,Maynez2020,Liu2024,Laban2025,DongreEtAl2025}. The contribution here is a unified account of the distinct law, filtration, repetition, and stability premises these uses require.

Internal coherence under $\mathbb Q$ gives probability its filtration-relative assessment meaning. Martingale innovation cancellation under $\mathbb P$ additionally requires conditional-mean correctness under that governing law. This condition yields validity in the large and fixed-bin calibration on infinite visitation, but it does not by itself give unconditional stabilization, nonvacuous or rich-class calibration, or procedure-level error control. Those claims require target-specific reference and sufficient stability conditions. Within the binary exchangeable class, the full distribution of $L$ identifies the exchangeable law; the non-identification result concerns a proper coarsening of that distribution. Conditional-mean correctness under a governing law can still coexist with almost-sure nonstabilization.

Our constructive contribution is a unified stabilization framework that decomposes empirical averages into a predictable reference drift and a martingale innovation. Stabilization follows when the drift average converges under the declared regime. Time-uniform control is separate and requires an observable nonnegative supermartingale or e-process valid under a specified null and reference law.

For AI-mediated systems, the framework supplies diagnostic hypotheses rather than established causes or remedies. Grounding, memory summaries, re-anchoring, calibration audits, and policy--environment controls are candidate interventions whose efficacy must be established empirically in the relevant regime.

Reliability claims are not properties of a probability number alone. A defensible statement specifies the target, assessment and evaluation laws when distinct, governing filtration, intended frequency interpretation, repetition protocol, checking class where relevant, and stability mechanism. Making these commitments explicit clarifies the scope of each guarantee and makes its empirical and theoretical support assessable.

\selectlanguage{english}

\begin{singlespace}
\newpage
\clearpage 
{
\setlength{\parskip}{1em} 
\setlength{\parindent}{0pt} 

\appendix
\refstepcounter{section}
\section*{Appendix A. Proofs of Propositions and an Auxiliary Theorem}
\phantomsection
\addcontentsline{toc}{section}{Appendix A. Proofs of Propositions and an Auxiliary Theorem}
\label{appendix-A}

This appendix collects proofs of the propositions stated in the main text (Sections~\ref{sec3-coherence-forecast}--\ref{sec6-beyond-exch}) together with the auxiliary de~Finetti representation (\autoref{them:definetti}) referenced in \autoref{sec4.2-definetti}. Several results are classical (de~Finetti, Birkhoff's ergodic theorem, Markov chain ergodic theorems); for these we provide short arguments and precise citations.

\subsection{Proof of \autoref{prop:coherence-decision}}
\label{appendix-A1}

\noindent\emph{(i) No nonzero one-sided payoff at a fixed information set.}
Let $Y_1,\ldots,Y_m\in L^1(\mathbb Q)$ and let $H_1,\ldots,H_m$ be bounded and $\mathcal G$-measurable.
Define
\[
S:=\sum_{i=1}^m H_i\bigl(Y_i-\mathbb E_{\mathbb Q}[Y_i\mid\mathcal G]\bigr).
\]
Then $S\in L^1(\mathbb Q)$. By linearity and the pull-out property,
\[
\mathbb E_{\mathbb Q}[S\mid\mathcal G]
=
\sum_{i=1}^m H_i\Bigl(\mathbb E_{\mathbb Q}[Y_i\mid\mathcal G]-\mathbb E_{\mathbb Q}[Y_i\mid\mathcal G]\Bigr)
=0
\qquad\text{a.s.}
\]
Taking expectations yields $\mathbb E_{\mathbb Q}[S]=0$. If $S\ge0$ $\mathbb Q$-a.s. with $\mathbb Q(S>0)>0$, integrability would imply $\mathbb E_{\mathbb Q}[S]>0$; the opposite one-sided sign would imply a negative expectation. Both contradict zero expectation.

\medskip
\noindent\emph{(ii) Conditional probability is Bayes-optimal under Brier loss.}
Fix $A\in\mathcal{F}$, write $Y:=\mathbf{1}_A$, and let $q\in L^0(\mathcal{G};[0,1])$ be any $\mathcal{G}$-measurable forecast.
Let $p:=\mathbb E_{\mathbb Q}[Y\mid\mathcal G]=\mathbb Q(A\mid\mathcal G)$.
Since $0\le Y,q\le1$, the loss $(Y-q)^2$ is integrable.  Expand
\[
(Y-q)^2
=
(Y-p+p-q)^2
=
(Y-p)^2 + (q-p)^2 + 2(p-q)(Y-p).
\]
Taking $\mathbb Q$-conditional expectations given $\mathcal G$ and using $\mathbb E_{\mathbb Q}[Y-p\mid\mathcal G]=0$ yields
\[
\mathbb E_{\mathbb Q}[(Y-q)^2\mid\mathcal G]
=
\mathbb E_{\mathbb Q}[(Y-p)^2\mid\mathcal G] + (q-p)^2
\qquad\text{a.s.}
\]
Thus the conditional expected Brier loss is minimized uniquely up to $\mathbb Q$-null sets at $q=p$.
\qed

\subsection{Proof of \autoref{prop:exch-align}}
\label{appendix-A2}

Let $(X_t)_{t\ge1}$ be exchangeable and let $\mathcal{F}_t:=\sigma(X_1,\ldots,X_t)$.
By \autoref{them:definetti}, there exists $\Theta\in[0,1]$ such that conditional on $\Theta$ the sequence is i.i.d.\ $\mathrm{Bernoulli}(\Theta)$.

\medskip
\noindent\emph{(i) Predictive martingale and convergence.}
Define $p_t:=\mathbb{P}(X_{t+1}=1\mid\mathcal{F}_t)=\mathbb{E}[X_{t+1}\mid\mathcal{F}_t]$.
By iterated expectation and conditional i.i.d.\ given $\Theta$,
\[
p_t
=
\mathbb{E}\!\left[\mathbb{E}[X_{t+1}\mid \Theta,\mathcal{F}_t]\mid \mathcal{F}_t\right]
=
\mathbb{E}[\Theta\mid \mathcal{F}_t].
\]
Hence $(p_t)_{t\ge0}$ is a bounded martingale with respect to $(\mathcal{F}_t)$.
By the martingale convergence theorem (or L\'evy's upward theorem), $p_t\to \mathbb{E}[\Theta\mid \mathcal{F}_\infty]$ a.s.,
where $\mathcal{F}_\infty:=\sigma(X_1,X_2,\ldots)$.
In the de~Finetti representation one may take $\Theta$ to be measurable with respect to the exchangeable (hence $\mathcal{F}_\infty$-measurable) $\sigma$-field, so $\mathbb{E}[\Theta\mid\mathcal{F}_\infty]=\Theta$ a.s., and therefore $p_t\to\Theta$ a.s.

\medskip
\noindent\emph{(ii) Stabilization of empirical averages.}\par\noindent
Conditional on $\Theta=\theta$, the variables $(X_t)$ are i.i.d.\ Bernoulli with parameter $\theta$, so by the strong law of large numbers,
$\bar X_n:=\frac1n\sum_{t=1}^n X_t \to \theta$ almost surely under $\mathbb{P}(\cdot\mid\Theta=\theta)$.
Equivalently, $\bar X_n\to\Theta$ almost surely under $\mathbb{P}$.
\qed

\subsection{Proof of \autoref{prop:freq-not-law}}
\label{appendix-A3}

\noindent\textbf{Part (i).}
Let $\mathbb{P}$ be exchangeable on $(\mathcal{X},\mathcal{B}(\mathcal{X}))$.
By de Finetti’s representation (\autoref{them:definetti}), there exists $\Theta\in[0,1]$ with law $\mu$ such that conditional on $\Theta$ the coordinates are i.i.d.\ $\mathrm{Bernoulli}(\Theta)$.
By the strong law, $\bar X_n\to\Theta$ almost surely, hence the globally defined limiting-frequency functional $L$ satisfies $L=\Theta$ almost surely.
Therefore, for any $p\in[0,1]$,
\[
\mathbb{P}(L=p)=\mathbb{P}(\Theta=p)=\mu(\{p\}),
\]
which is \eqref{eq:atom-mass}.

\medskip
\noindent\textbf{Part (ii).}
Fix $p\in(0,1)$ and $\alpha\in(0,1)$.
Choose two distinct points $p_1,p_2\in(0,1)$ with $p_1\neq p_2$ and $p_i\neq p$, and define two distinct mixing measures
\[
\mu_1:=\alpha\,\delta_p+(1-\alpha)\,\delta_{p_1},
\qquad
\mu_2:=\alpha\,\delta_p+(1-\alpha)\,\delta_{p_2}.
\]
Let $\mathbb{P}_1,\mathbb{P}_2$ be the corresponding exchangeable laws via the mixture representation in \autoref{them:definetti}.
Then $\mathbb{P}_1\neq\mathbb{P}_2$ because the de~Finetti mixing measure is uniquely determined by the exchangeable law.
However, in both cases $L=\Theta$ a.s., so $\mathbb{P}_i(L=p)=\mu_i(\{p\})=\alpha$ for $i=1,2$.
\qed

\subsection{Proof of \autoref{prop:coh-not-stable}}
\label{appendix-A4}

Let $\mathcal F_t=\sigma(X_1,\ldots,X_t)$ and define $p_t:=\mathbb E_{\mathbb R}[X_{t+1}\mid\mathcal F_t]$. Then $(X_{t+1}-p_t)_{t\ge0}$ is a bounded martingale-difference sequence under $\mathbb R$; hence the bounded martingale strong law gives the validity statement in the proposition. It remains to construct a non-degenerate, countably additive law $\mathbb R$ on $\mathcal X=\{0,1\}^{\mathbb N}$ for which $\mathbb R(D)=0$.

\medskip
\noindent\emph{Step 1: a block-constant process.}
Let $L_k:=2^{k^2}$ and $T_k:=\sum_{j=1}^k L_j$.
Let $(\Xi_k)_{k\ge1}$ be i.i.d.\ $\mathrm{Bernoulli}(1/2)$ and define a sequence $(X_t)_{t\ge1}$ by
\[
X_t := \Xi_k
\qquad\text{for }T_{k-1}< t\le T_k,\quad k\ge1,
\]
with $T_0:=0$.  Let $\mathbb{R}$ denote the law of $(X_t)$ on $(\mathcal{X},\mathcal{B}(\mathcal{X}))$.
Then $\mathbb{R}$ is countably additive and non-degenerate (e.g.\ $\mathbb{R}(X_1=1)=1/2$).

\medskip
\noindent\emph{Step 2: the empirical mean tracks the last block.}
Write $\bar X_{T_k}=\frac{1}{T_k}\sum_{t=1}^{T_k} X_t$.
Since the $k$th block is constant with value $\Xi_k$,
\[
\bar X_{T_k}
=
\frac{T_{k-1}}{T_k}\,\bar X_{T_{k-1}}
+
\frac{L_k}{T_k}\,\Xi_k,
\]
so
\[
\bigl|\bar X_{T_k}-\Xi_k\bigr|
\le
\frac{T_{k-1}}{T_k}.
\]
Moreover,
\[
\frac{T_{k-1}}{T_k}
=
\frac{\sum_{j<k}2^{j^2}}{\sum_{j\le k}2^{j^2}}
\le
\frac{k\,2^{(k-1)^2}}{2^{k^2}}
=
k\,2^{-2k+1}\xrightarrow[k\to\infty]{}0.
\]
Hence $\bar X_{T_k}-\Xi_k\to0$ almost surely.

\medskip
\noindent\emph{Step 3: infinitely many zeros and ones force non-convergence.}
Since $(\Xi_k)$ are i.i.d.\ $\mathrm{Bernoulli}(1/2)$, with probability one the sequence takes the values $0$ and $1$ infinitely often.
Along the subsequence $(T_k)$, the empirical means $\bar X_{T_k}$ therefore have subsequential limits $0$ and $1$.
Consequently $\bar X_n$ cannot converge, so $\mathbb{R}(D)=0$.
\qed

\subsection{Proof of \autoref{prop:unified-stabilization}}
\label{appendix-A5}

Let $\mathcal{H}_t:=\mathcal{R}\vee\mathcal{F}_t$ and $m_t:=\mathbb{E}[X_t\mid\mathcal{H}_{t-1}]$.
Define $d_t:=X_t-m_t$ and $M_n:=\sum_{t=1}^n d_t$.
Then $(M_n)_{n\ge0}$ is a martingale with respect to $(\mathcal{H}_t)$ and, since $0\le X_t,m_t\le1$, we have $|d_t|\le1$ a.s.

Fix $\varepsilon>0$.  By Azuma--Hoeffding for martingales with bounded increments,
\[
\mathbb{P}\bigl(|M_n|\ge n\varepsilon\bigr)
\le
2\exp\!\left(-\frac{\varepsilon^2}{2}n\right).
\]
The series $\sum_{n\ge1}\mathbb{P}(|M_n|\ge n\varepsilon)$ converges, so by Borel--Cantelli,
$|M_n|/n<\varepsilon$ for all but finitely many $n$ almost surely.
Since $\varepsilon$ is arbitrary, $M_n/n\to0$ a.s.
Thus
\[
\bar X_n-\frac1n\sum_{t=1}^n m_t
=
\frac1n\sum_{t=1}^n (X_t-m_t)
=
\frac{M_n}{n}\xrightarrow{\mathrm{a.s.}}0.
\]
Combining with the drift assumption \eqref{eq:cesaro-drift-sec6} yields $\bar X_n\to\theta$ almost surely.
\qed

\subsection{Proof of \autoref{prop:filtration-stabilization}}
\label{appendix-A6}

Apply \autoref{prop:unified-stabilization} with the trivial reference $\sigma$-algebra $\mathcal{R}$, so that $\mathcal{H}_t=\mathcal{F}_t$.
Then $m_t=\mathbb{E}[X_t\mid\mathcal{F}_{t-1}]=p_{t-1}$ for $t\ge1$.
If $p_t\to p_\infty$ a.s., then by Ces\`aro's theorem,
\[
\frac1n\sum_{t=1}^n m_t
=
\frac1n\sum_{t=1}^n p_{t-1}
\xrightarrow{\mathrm{a.s.}} p_\infty.
\]
\autoref{prop:unified-stabilization} then yields $\bar X_n\to p_\infty$ almost surely.
\qed

\subsection{Proof of \autoref{prop:ville-inequality}}
\label{appendix-A7}

Fix $c>0$ and define the (possibly infinite) stopping time $\tau:=\inf\{t\ge0:\,M_t\ge c\}$ and the bounded stopping times $\tau_n:=\min(\tau,n)$.
Since $(M_t)$ is a nonnegative supermartingale with $M_0=1$, optional stopping for bounded stopping times gives
$\mathbb{E}[M_{\tau_n}]\le \mathbb{E}[M_0]=1$.
On $\{\tau\le n\}$ we have $M_{\tau_n}=M_\tau\ge c$, while on $\{\tau>n\}$ we have $M_{\tau_n}=M_n\ge0$.
Therefore
\[
1\ge \mathbb{E}[M_{\tau_n}]
\ge \mathbb{E}\!\left[M_{\tau_n}\mathbf{1}_{\{\tau\le n\}}\right]
\ge c\,\mathbb{P}(\tau\le n).
\]
Hence $\mathbb{P}(\tau\le n)\le 1/c$ for all $n$, and letting $n\to\infty$ yields
$\mathbb{P}(\sup_{t\ge0}M_t\ge c)=\mathbb{P}(\tau<\infty)\le 1/c$.
\qed

\subsection{Proof of \autoref{prop:shift-stabilization}}
\label{appendix-A8}

If $(X_t)$ is strictly stationary with $\mathbb{E}|X_1|<\infty$, Birkhoff's pointwise ergodic theorem yields
\[
\bar X_n=\frac1n\sum_{t=1}^n X_t \xrightarrow{\mathrm{a.s.}} \mathbb{E}[X_1\mid\mathcal{I}],
\]
where $\mathcal{I}$ is the shift-invariant $\sigma$-algebra.
If the process is ergodic, $\mathcal{I}$ is trivial and thus $\mathbb{E}[X_1\mid\mathcal{I}]=\mathbb{E}[X_1]$ a.s.
\qed

\subsection{Proof of \autoref{prop:policy-stabilization}}
\label{appendix-A9}

Let $\mathcal{F}_t$ be the natural filtration generated by the trajectory up to time $t$.
By the Markov property and the definition of $r$ in \autoref{prop:policy-stabilization},
\[
\mathbb{E}[X_{t+1}\mid \mathcal{F}_t]=\mathbb{E}[X_{t+1}\mid S_t,A_t]=r(S_t,A_t)\qquad\text{a.s.}
\]
By assumption, $(S_t,A_t)$ is a positive Harris recurrent, aperiodic Markov chain with invariant probability measure $\nu_{\pi,\mathsf P}$.
The Markov chain ergodic theorem implies
\[
\frac1n\sum_{t=0}^{n-1} r(S_t,A_t)
\longrightarrow
\int_{\mathsf S\times\mathsf A} r(s,a)\,\nu_{\pi,\mathsf P}(ds,da)
=:\rho(\pi,\mathsf P)
\qquad\text{a.s.}
\]
Since $0\le X_t\le1$, \autoref{prop:unified-stabilization} (with trivial $\mathcal R$) then yields
$\bar X_n\to \rho(\pi,\mathsf P)$ almost surely.
\qed

\subsection{Proof of \autoref{prop:design-stabilization}}
\label{appendix-A10}

By definition and the fact that $Y_t(0),Y_t(1)$ are $\mathcal{R}$-measurable,
\[
m_t
=\mathbb{E}[X_t\mid\mathcal{H}_{t-1}]
=
\mathbb{E}\!\left[A_tY_t(1)+(1-A_t)Y_t(0)\mid\mathcal{H}_{t-1}\right]
=
Y_t(1)\,\mathbb{E}[A_t\mid\mathcal{H}_{t-1}]
+
Y_t(0)\,\mathbb{E}[1-A_t\mid\mathcal{H}_{t-1}].
\]
Using $\mathbb{E}[A_t\mid\mathcal{H}_{t-1}]=\mathbb{P}(A_t=1\mid\mathcal{H}_{t-1})=q_t$ (by \eqref{eq:design-rand-sec6}) gives
$m_t=q_tY_t(1)+(1-q_t)Y_t(0)$ a.s.
If $(1/n)\sum_{t=1}^n m_t\to\theta$ a.s., \autoref{prop:unified-stabilization} applied with the reference
$\sigma$-algebra $\mathcal{R}$ and augmented filtration $\mathcal{H}_t=\mathcal{R}\vee\mathcal{F}_t$ yields $\bar X_n\to\theta$ a.s.
\qed

\subsection{Proof of \autoref{prop:design-martingale}}
\label{appendix-A11}

By definition, $m_t=\mathbb{E}[X_t\mid\mathcal{H}_{t-1}]$ is $\mathcal{H}_{t-1}$-measurable and
\[
\mathbb{E}[X_t-m_t\mid\mathcal{H}_{t-1}]
=
\mathbb{E}[X_t\mid\mathcal{H}_{t-1}]-m_t
=
0\qquad\text{a.s.}
\]
Thus $d_t:=X_t-m_t$ is a martingale difference sequence with respect to $(\mathcal{H}_t)$.
If $(M_t)_{t\ge0}$ is any nonnegative supermartingale with respect to $(\mathcal{H}_t)$ with $M_0=1$, then by \autoref{prop:ville-inequality},
\[
\mathbb{P}\!\left(\sup_{t\ge0}M_t\ge c\right)\le \frac1c,\qquad c>0,
\]
and for any possibly infinite stopping time $T$,
\[
\mathbb P(T<\infty,M_T\ge c)
\le\mathbb P\!\left(\sup_{t\ge0}M_t\ge c\right)
\le\frac1c.
\]
If $T<\infty$ almost surely, the first event may be written $\{M_T\ge c\}$. This implication applies only after a suitable observable null-valid supermartingale has been constructed.
\qed

\subsection{Proof of \autoref{prop:fixed-bin-calibration}}
\label{appendix-A12}

Let $(\mathcal F_t)_{t\ge0}$ be a filtration and assume $X_t\in[0,1]$ a.s. for all $t\ge1$.
Define the conditionally correct forecast process $p_{t-1}:=\mathbb E_{\mathbb P}[X_t\mid\mathcal F_{t-1}]$ and the innovation sequence
\[
Z_t := X_t-p_{t-1}.
\]
Then $\mathbb E[Z_t\mid\mathcal F_{t-1}]=0$ a.s. for each $t$ and $|Z_t|\le 1$ a.s. (since $0\le X_t,p_{t-1}\le 1$).

Fix a Borel bin $I\subset[0,1]$ and let $W_t:=\mathbf 1\{p_{t-1}\in I\}$, which is $\mathcal F_{t-1}$-measurable.
Define
\[
N_n(I):=\sum_{t=1}^n W_t,
\qquad
M_n := \sum_{t=1}^n W_t Z_t = \sum_{t=1}^n W_t(X_t-p_{t-1}).
\]
Then $(M_n)_{n\ge1}$ is an $(\mathcal F_n)$-martingale, because
\[
\mathbb E[M_n-M_{n-1}\mid\mathcal F_{n-1}]
=
\mathbb E[W_n Z_n\mid\mathcal F_{n-1}]
=
W_n\,\mathbb E[Z_n\mid\mathcal F_{n-1}]
=
0
\qquad\text{a.s.}
\]
Moreover, the increments are bounded: $|M_n-M_{n-1}|=|W_n Z_n|\le 1$ a.s.

\medskip
\noindent\emph{Step 1: Ratio representation.}
For $n$ with $N_n(I)>0$, the definitions of $\widehat x_n(I)$ and $\widehat p_n(I)$ give
\[
\widehat{x}_n(I)-\widehat{p}_n(I)
=
\frac{\sum_{t=1}^n W_t X_t}{N_n(I)}-\frac{\sum_{t=1}^n W_t p_{t-1}}{N_n(I)}
=
\frac{\sum_{t=1}^n W_t (X_t-p_{t-1})}{N_n(I)}
=
\frac{M_n}{N_n(I)},
\]
which is \eqref{eq:binwise-gap}.

\medskip
\noindent\emph{Step 2: A predictably weighted martingale.}
Define
\[
a_t:=\frac{W_t}{N_{t-1}(I)+1},
\qquad
L_n:=\sum_{t=1}^n a_tZ_t.
\]
Because $W_t$ and $N_{t-1}(I)$ are $\mathcal F_{t-1}$-measurable, $(a_t)$ is predictable and $(L_n)$ is a martingale. At the $k$th visit to $I$, the nonzero coefficient is exactly $1/k$. Hence
\[
\mathbb E_{\mathbb P}[L_n^2]
=\sum_{t=1}^n\mathbb E_{\mathbb P}[a_t^2Z_t^2]
\le\mathbb E_{\mathbb P}\!\left[\sum_{t=1}^n a_t^2\right]
\le\sum_{k=1}^{\infty}\frac1{k^2}<\infty.
\]
Thus $(L_n)$ is uniformly bounded in $L^2$ and converges almost surely.

On the event $\{N_n(I)\to\infty\}$, let $\tau_k$ denote the finite time of the $k$th visit. Along this event,
\[
\sum_{t=1}^{\infty}a_tZ_t
=\sum_{k=1}^{\infty}\frac{Z_{\tau_k}}{k}
\]
converges. Kronecker's lemma therefore gives
\[
\frac1k\sum_{j=1}^k Z_{\tau_j}\longrightarrow0.
\]
If $N_n(I)=k$, then $\sum_{t=1}^nW_tZ_t=\sum_{j=1}^kZ_{\tau_j}$. Since $k=N_n(I)\to\infty$ on the event under consideration,
\[
\frac{\sum_{t=1}^nW_t(X_t-p_{t-1})}{N_n(I)}\longrightarrow0.
\]
Together with the ratio identity, this proves \eqref{eq:binwise-cal}. \qed

\subsection{Auxiliary result: de~Finetti representation (\autoref{sec4.2-definetti})}
\label{appendix-A13}

\begin{theorem}[de~Finetti representation; binary case]
\label{them:definetti}
If $(X_t)_{t\ge1}$ is an exchangeable $\{0,1\}$-valued sequence, then there exists a random variable
$\Theta\in[0,1]$ such that, conditional on $\Theta$, the variables $(X_t)_{t\ge1}$ are i.i.d.\ $\mathrm{Bernoulli}(\Theta)$.
Equivalently, there exists a Borel probability measure $\mu$ on $[0,1]$ such that for every $n\ge1$ and every
$(x_1,\ldots,x_n)\in\{0,1\}^n$,
\[
\mathbb{P}(X_1=x_1,\ldots,X_n=x_n)
=
\int_0^1 \prod_{t=1}^n \theta^{x_t}(1-\theta)^{1-x_t}\,\mu(d\theta).
\]
The measure $\mu$ is uniquely determined by the exchangeable law and is the distribution of $\Theta$.
\end{theorem}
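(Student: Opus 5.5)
The plan is to prove the binary de~Finetti representation, \autoref{them:definetti}, by the classical route through the limiting frequency: construct $\Theta$ as the almost sure limit of $\bar X_n$, identify the finite-dimensional laws by a sampling-without-replacement argument, and obtain uniqueness from the moment problem on $[0,1]$.

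\emph{Step 1: existence of $\Theta$.} Consider the reverse filtration $\mathcal G_n:=\sigma(S_n,X_{n+1},X_{n+2},\ldots)$ with $S_n:=\sum_{t\le n}X_t$. Exchangeability shows that $\mathbb E[X_1\mid\mathcal G_n]=\mathbb E[X_j\mid\mathcal G_n]$ for every $j\le n$. Averaging over $j$ gives $\mathbb E[X_1\mid\mathcal G_n]=S_n/n=\bar X_n$. Hence $(\bar X_n)$ is a reverse martingale, and by the reverse martingale convergence theorem $\bar X_n\to\Theta:=\mathbb E[X_1\mid\mathcal G_\infty]$ almost surely, with $\Theta\in[0,1]$. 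We take $\mu:=\mathcal L(\Theta)$. Note that $\Theta$ is measurable with respect to the exchangeable $\sigma$-field, as used in \autoref{appendix-A2}.

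\emph{Step 2: identification of the finite-dimensional laws.} This is the main step. Fix $k$ and a pattern $(x_1,\ldots,x_k)$ with $s$ ones. For $n\ge k$, condition on $S_n$. By exchangeability, given $S_n=j$, the vector $(X_1,\ldots,X_n)$ is uniform over the $\binom nj$ arrangements. Therefore
\[
\mathbb P(X_1=x_1,\ldots,X_k=x_k\mid S_n)=\frac{(S_n)_s\,(n-S_n)_{k-s}}{(n)_k},
\]
where $(a)_r$ denotes a falling factorial. This is the without-replacement (hypergeometric) probability. As $n\to\infty$ the right side differs from $\bar X_n^{\,s}(1-\bar X_n)^{k-s}$ by $O(k^2/n)$, uniformly in the sample. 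By Step 1 it therefore converges almost surely to $\Theta^s(1-\Theta)^{k-s}$. Taking expectations and applying bounded convergence gives
\[
\mathbb P(X_1=x_1,\ldots,X_k=x_k)=\mathbb E[\Theta^s(1-\Theta)^{k-s}]=\int_0^1\theta^s(1-\theta)^{k-s}\mu(d\theta).
\]
This is the displayed mixture formula. The main obstacle is doing this conditioning cleanly while keeping the error term uniform. For the conditional i.i.d. statement we want more than the unconditional formula. We apply the same argument to the event $\{X_{1:k}=x_{1:k}\}\cap\{\Theta\in B\}$. We approximate $\mathbf 1_B(\Theta)$ by bounded functions $g(\bar X_m)$ of tail averages with $m\gg n$, which are asymptotically $\mathcal G_n$-measurable. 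This yields
\[
\mathbb P(X_{1:k}=x_{1:k},\ \Theta\in B)=\mathbb E\bigl[\Theta^s(1-\Theta)^{k-s}\mathbf 1_B(\Theta)\bigr],
\]
which says exactly that, given $\Theta$, the $X_t$ are i.i.d. $\mathrm{Bernoulli}(\Theta)$.

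\emph{Step 3: uniqueness.} Taking all $x_t=1$ shows that the exchangeable law determines every moment $\int\theta^k\mu(d\theta)=\mathbb P(X_1=\cdots=X_k=1)$. A probability measure on the compact interval $[0,1]$ is determined by its moments, by Weierstrass density of polynomials in $C[0,1]$. Hence $\mu$ is unique, and it equals the law of $\Theta$ constructed in Step 1.
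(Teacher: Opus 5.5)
Your proof is correct. The paper gives no argument of its own here: it calls the theorem classical and cites Kallenberg. There, de~Finetti's theorem is proved for sequences in a general Borel space, indeed for contractable sequences, by establishing conditional i.i.d.\ structure given the tail $\sigma$-field, with no counting or moment argument.

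Your route is the elementary binary-specific one: reverse-martingale convergence of $\bar X_n=\mathbb E[X_1\mid\mathcal G_n]$, the hypergeometric formula for sampling without replacement, and uniqueness via moment determinacy on $[0,1]$. Its main payoff for this paper is that $\Theta$ is built on the original space as the almost sure limit of $\bar X_n$ and is $\mathcal G_\infty$-measurable. That is precisely what Appendices A.2 and A.3 invoke ($p_t\to\Theta$ and $L=\Theta$ a.s.), even though the theorem as stated does not assert it. It also makes the ``equivalently'' clause automatic, since both forms are proved for the same $\Theta$. The price is generality: the counting and moment steps are tied to a finite (here binary) alphabet, whereas the cited general argument is not.

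One step needs tightening. For the conditional i.i.d.\ upgrade, conditioning on $S_n$ alone is not enough. What you need is
$\mathbb P(X_{1:k}=x_{1:k}\mid\mathcal G_n)=h_n(S_n)$, where $h_n(S_n):=(S_n)_s(n-S_n)_{k-s}/(n)_k$. This holds because permutations of the first $n$ coordinates fix both $S_n$ and $(X_{n+1},X_{n+2},\ldots)$, so the conditional law of $X_{1:n}$ given $\mathcal G_n$ is uniform over arrangements.

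Once this is stated, the approximation by $g(\bar X_m)$ is unnecessary. Such functions are in fact exactly, not asymptotically, $\mathcal G_n$-measurable for $m\ge n$. More to the point, $\Theta$ is itself $\mathcal G_n$-measurable, so $\mathbb P(X_{1:k}=x_{1:k}\mid\Theta)=\mathbb E[h_n(S_n)\mid\Theta]$ for every $n\ge k$. Conditional bounded convergence then identifies the left side, which does not depend on $n$, as $\Theta^s(1-\Theta)^{k-s}$.
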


\noindent\textit{Proof.} Classical; see \citet{kallenberg2005probabilistic}. \qed

\refstepcounter{section}
\section*{Appendix B. Feedback Construction and Finite Simulation Without Stabilization}
\phantomsection
\addcontentsline{toc}{section}{Appendix B. Feedback Construction and Finite Simulation Without Stabilization}
\label{app:sim-rl-coh-not-stable}

This appendix establishes an infinite-horizon feedback construction in which forecasts are conditionally correct under the governing law but empirical means fail to stabilize almost surely. It then interprets the supplied finite simulation as an illustration of the proved result.

\noindent \textbf{Environment, filtration, and conditional law.} Consider a one-state, two-action bandit. At time $t$, the learner chooses a predictable action $A_t\in\{0,1\}$ and then observes $X_t\in\{0,1\}$. Let
\[
\mathcal F_t:=\sigma(A_1,X_1,\ldots,A_t,X_t),
\]
so $A_t$ is $\mathcal F_{t-1}$-measurable. Fix $0<r_0<\tau<r_1<1$ and assume
\[
\mathbb P(X_t=1\mid\mathcal F_{t-1})=r_{A_t}
\qquad\text{a.s. for every }t.
\]
Together with the predictable update rule below, these Bernoulli transition kernels define a countably additive path law by the standard sequential product-kernel construction.

\noindent \textbf{Conditional correctness and martingale innovations.} Define
\[
p_{t-1}:=\mathbb E[X_t\mid\mathcal F_{t-1}]=r_{A_t}.
\]
Then $d_t:=X_t-p_{t-1}$ satisfies
\[
\mathbb E[d_t\mid\mathcal F_{t-1}] = 0\qquad\text{a.s. for all }t,
\]
so the bounded martingale strong law gives
\[
\frac{1}{n}\sum_{t=1}^n (X_t-p_{t-1})\to 0 \quad\text{a.s.}
\]
This is conditional-mean correctness and validity in the large under the governing law; it is not inferred merely from internal coherence under a separate assessment law.

\noindent \textbf{Infinite-horizon feedback rule.} For every $k\ge1$, let $L_k:=2^{k+L_0}$, $n_0:=0$, and $n_k:=\sum_{j=1}^kL_j=2^{L_0}(2^{k+1}-2)$. During episode $k$, set $A_t=a_k$ for $n_{k-1}<t\le n_k$. After observing
\[
\widehat R_k:=\frac1{L_k}\sum_{t=n_{k-1}+1}^{n_k}X_t,
\]
update
\[
a_{k+1}=
\begin{cases}
0, & \widehat{R}_k>\tau,\\
1, & \widehat{R}_k\le \tau,
\end{cases}.
\]
The rule switches away from action $1$ after a high episode average and toward action $1$ after a low one.

\noindent \textbf{Almost-sure switching and nonstabilization.} Let $\delta:=\min\{\tau-r_0,r_1-\tau\}>0$. Within episode $k$, the centered rewards are bounded martingale differences. A conditional Hoeffding--Azuma bound gives
\[
\mathbb P\!\left(a_{k+1}\ne1-a_k\mid\mathcal F_{n_{k-1}}\right)
\le2\exp\!\left(-\frac{\delta^2L_k}{2}\right).
\]
The right-hand side is summable in $k$. By the first Borel--Cantelli lemma, only finitely many threshold errors occur almost surely; hence the actions alternate from some finite episode onward.

At the episode endpoints,
\[
\frac1{n_k}\sum_{t=1}^{n_k}p_{t-1}
=\frac{\sum_{j=1}^kL_jr_{a_j}}{\sum_{j=1}^kL_j}.
\]
Because episode lengths double and actions eventually alternate, the asymptotic weight on episodes having the same action as episode $k$ is $2/3$, and the weight on the other action is $1/3$. Therefore, up to a possible parity swap caused by finitely many early threshold errors,
\[
\frac1{n_k}\sum_{t=1}^{n_k}p_{t-1}
\longrightarrow
\begin{cases}
\frac23r_1+\frac13r_0,&\text{along }\{k:a_k=1\},\\[2mm]
\frac23r_0+\frac13r_1,&\text{along }\{k:a_k=0\}.
\end{cases}
\]
The two limits differ. Since
\[
\bar X_n=\frac1n\sum_{t=1}^np_{t-1}+\frac1n\sum_{t=1}^n(X_t-p_{t-1})
\]
and the innovation term converges to zero almost surely, the same distinct subsequential limits hold for $\bar X_{n_k}$. Thus
\[
\mathbb P\!\left(\lim_{n\to\infty}\bar X_n\text{ exists}\right)=0.
\]
The finite plot in \autoref{fig:rl-coh-not-stable} illustrates this proved infinite-horizon separation; it does not itself establish the probability-one conclusion.

\begin{figure}[H]
\centering
\includegraphics[width=0.92\linewidth]{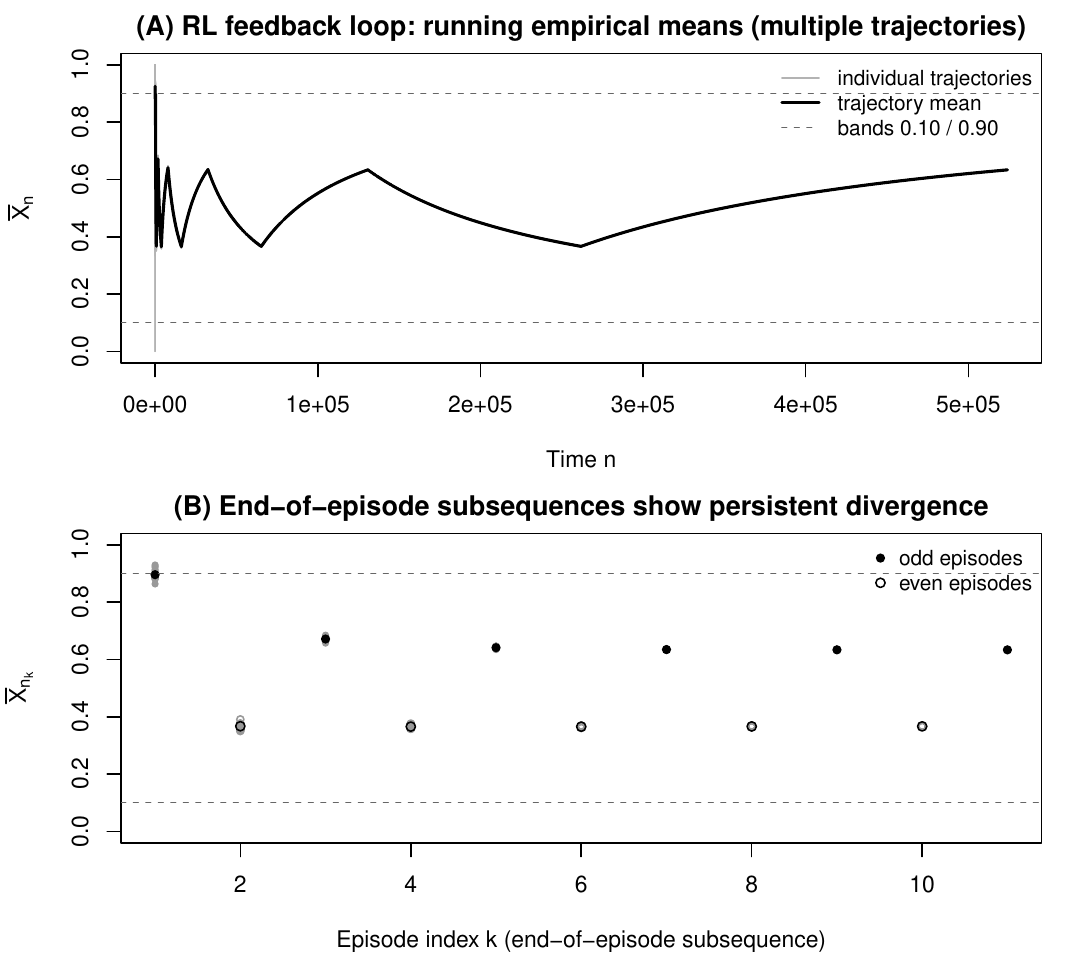}
\caption{\textbf{Finite simulation illustrating nonstabilization in the feedback construction.} Twenty trajectories were simulated with $r_0=0.10$, $r_1=0.90$, $\tau=0.50$, $a_1=1$, and $L_k=2^{k+7}$ for $K=11$ episodes, giving $n_{11}=524{,}032$. \textbf{(A)} Running empirical means $\bar X_n$ for individual trajectories (grey), their across-trajectory mean (black), and the action-conditional reward probabilities $r_0$ and $r_1$ (dashed). \textbf{(B)} Odd- and even-episode endpoint means remain separated over the displayed horizon. For the infinite-episode model proved above, the corresponding subsequences converge almost surely, up to a possible parity swap, to $\frac23r_1+\frac13r_0\approx0.633$ and $\frac23r_0+\frac13r_1\approx0.367$. The finite simulation illustrates rather than proves nonconvergence. Conditional-mean correctness holds because $\mathbb E[X_t\mid\mathcal F_{t-1}]=r_{A_t}$.}
\label{fig:rl-coh-not-stable}
\end{figure}

\refstepcounter{section}
\section*{Appendix C. Supplementary Examples}
\phantomsection
\addcontentsline{toc}{section}{Appendix C. Supplementary Examples}

This appendix collects additional examples that elaborate, extend, or operationalize the main arguments of the paper without being structurally necessary for the central results. These vignettes illustrate further instances of filtration mismatch, regime specification, and drift control across adaptive and AI-mediated settings. They are provided to support readers seeking additional technical detail or applied context, while keeping the main text focused on the Principle, the unified stabilization framework, and the core alignment and separation results.

\begin{exmp}[Optional stopping and why coherence is filtration-relative]
\label{ex:opt-stopping}    

Suppose an analyst monitors a running statistic $S_t$ and stops at a stopping time $\tau$. Let $\mathcal F_t$ include the realized history and stopping rule. Under an assessment law $\mathbb Q$, the conditional price $\mathbb E_{\mathbb Q}[Y\mid\mathcal F_t]$ remains an internally coherent assessment given that information. At each fixed $\mathcal F_t$, \autoref{prop:coherence-decision} excludes a nonzero one-sided payoff from finite $\mathcal F_t$-measurable stakes.

This fixed-information-set statement does not itself prove validity for accumulated multi-period trading strategies or invariance of an inferential error rate. Those claims require a specified martingale or test process, admissible predictable strategies, optional-sampling conditions, and a repetition regime that includes the stopping rule.
\end{exmp}

\medskip
\begin{exmp}[Adaptive experimentation and conditional optimality]
\label{ex:adp-expr}    

In an adaptive clinical trial, treatment assignments may depend on accumulated outcomes \citep{RobertsonEtAl2023}.
Let $\mathcal{F}_t$ denote the $\sigma$-algebra generated by assignments and responses up to time
$t$.  At time $t$, the trial may compute the posterior probability that treatment $A$ is superior
to treatment $B$:
\[
q_t := \mathbb{Q}(\theta_A > \theta_B \mid \mathcal{F}_t),
\]
and use this quantity to guide allocation or stopping decisions. Under its assessment law, $q_t$ is Bayes-optimal for the event $\{\theta_A>\theta_B\}$ under strictly proper scoring rules conditional on $\mathcal F_t$. Its interpretation remains conditional on that law and history.

However, whether nominal coverage or type-I error rates are preserved under the adaptive design is a distinct question.  Such frequency claims require a reference structure that specifies how the entire adaptive design is to be replicated.  Coherence ensures that the posterior is internally consistent; it does not by itself assert that repeated implementations of the design would satisfy
a particular long-run error bound.
\end{exmp}

\medskip
\begin{exmp}[Non-identifiability from a common Ces\`aro limit, without exchangeability]
\label{ex:freq-not-law}    
Let $\mathcal X=\{0,1\}^{\mathbb N}$ and $\bar X_n=n^{-1}\sum_{t=1}^n X_t$. We construct two distinct probability laws
$\mathbb P_1\neq \mathbb P_2$ on $\mathcal X$ such that $\bar X_n\to 1/2$ almost surely under both laws,
yet their finite-dimensional distributions differ. In particular,
\[
\mathbb{P}_1(X_1=X_2)=\frac{1}{2}
\qquad\text{but}\qquad
\mathbb{P}_2(X_1=X_2)=\frac{1}{10}.
\]

\textbf{Law $\mathbb P_1$ (i.i.d.\ Bernoulli).}
Under $\mathbb P_1$, the coordinate process $(X_t)_{t\ge1}$ is i.i.d.\ with $\mathbb P_1(X_t = 1) = 1/2$.
Thus for every $n \ge 1$ and $(x_1,\dots,x_n)\in\{0,1\}^n$,
\[
\mathbb P_1(X_1=x_1,\dots,X_n=x_n)=2^{-n}.
\]
By the strong law of large numbers, $\bar X_n\to 1/2$ almost surely. Moreover,
$\mathbb{P}_1(X_1=X_2)=\mathbb{P}_1(X_1=0,X_2=0)+\mathbb{P}_1(X_1=1,X_2=1)=1/4+1/4=1/2$.

\textbf{Law $\mathbb P_2$ (ergodic Markov dependence).}
Under $\mathbb P_2$, $(X_t)_{t\ge1}$ is a stationary, irreducible, aperiodic two-state Markov chain with transition matrix
\[
P
=
\begin{pmatrix}
\mathbb P_2(X_{t+1}=0 \mid X_t=0) &
\mathbb P_2(X_{t+1}=1 \mid X_t=0)
\\
\mathbb P_2(X_{t+1}=0 \mid X_t=1) &
\mathbb P_2(X_{t+1}=1 \mid X_t=1)
\end{pmatrix}
=
\begin{pmatrix}
1/10 & 9/10 \\
9/10 & 1/10
\end{pmatrix}.
\]
Initialize the chain in its stationary distribution $\pi=(1/2,1/2)$. Then $\mathbb P_2(X_t=1)=1/2$ for all $t$, and
\[
\mathbb P_2(X_1=x_1,\dots,X_n=x_n)
=
\pi_{x_1}\prod_{t=1}^{n-1}P_{x_t,x_{t+1}}.
\]
By the ergodic theorem for Markov chains, $\bar X_n\to \mathbb E_{\mathbb P_2}[X_1]=1/2$ almost surely.
However,
\[
\mathbb{P}_2(X_1=X_2)=\sum_{x\in\{0,1\}}\pi_x P_{x,x}=\frac{1}{2}\cdot\frac{1}{10}+\frac{1}{2}\cdot\frac{1}{10}=\frac{1}{10},
\]
so $\mathbb P_2\neq \mathbb P_1$ despite the common almost sure Ces\`aro limit.
\end{exmp}

\medskip
\begin{exmp}[Recommendation algorithms and policy–environment mixtures]
\label{ex:rec-alg}    

Suppose a recommendation system deploys a fixed policy $\pi$ that maps user histories to item
exposures.  Let $X_t\in\{0,1\}$ denote whether the displayed item is clicked.
Even if empirical click-through rates $\bar X_n$ converge to $p$, distinct user-type mixtures, preference distributions, or exposure mechanisms can yield the same aggregate rate under policy $\pi$. The induced laws can nevertheless differ in finite-horizon predictive distributions. The limiting frequency identifies at most one policy-indexed performance functional of the deployed pair, not the pair itself. Behavior under alternative policies additionally requires a specified causal or structural intervention model and its identification assumptions.
\end{exmp}

\medskip
\begin{exmp}[Adaptive clinical trials and regime dependence]
\label{ex:adp-clin}    
In an adaptive clinical trial, treatment allocation probabilities evolve with accumulating data.
Suppose that under a given adaptive rule, the empirical proportion of patients receiving the
superior treatment converges to a constant $p$.  This convergence reflects the joint behavior of
the learning rule and the data-generating mechanism. However, multiple distinct stochastic models for patient response, differing in latent heterogeneity or time-varying effects, may induce the same limiting allocation proportion under the
same adaptive design.  The long-run frequency of assignments or outcomes therefore does not
identify the full probability law governing patient responses.  It captures only a marginal
asymptotic consequence of the interaction between design and environment.
\end{exmp}

\medskip
\begin{exmp}[A ``freeze-out'' regime: stabilized conditionally correct forecasts yield stabilized averages]
\label{ex:freezeout}
Consider an adaptive system that continues to observe a data stream $(Z_t)$ but, after a burn-in
period, stops changing the internal state used to form one-step forecasts.  This captures a common
deployment pattern in which an online learner explores and updates early, then freezes a model or
policy version for a stable production window.

Formally, let $(\mathcal{F}_t)_{t\ge0}$ be the filtration generated by the realized history, and let
$(X_t)_{t\ge1}$ be a bounded outcome of interest (e.g., click/no-click), with $X_t\in[0,1]$.
Suppose the forecaster's state $S_t$ takes values in a metric space $\mathsf S$, is $\mathcal F_t$-measurable, and converges almost surely to $S_\infty$. Assume either that $S_t$ is eventually constant almost surely or that $g:\mathsf S\to[0,1]$ is measurable and continuous at $S_\infty$ almost surely. Let the evaluation-law conditionally correct forecast be
\[
p_t:=\mathbb E_{\mathbb P}[X_{t+1}\mid\mathcal F_t]=g(S_t).
\]
Then $p_t\to p_\infty:=g(S_\infty)$ almost surely. By
Proposition~\ref{prop:filtration-stabilization}, the empirical mean stabilizes to the same limit
\[
\bar X_n := \frac{1}{n}\sum_{t=1}^n X_t \xrightarrow{\mathrm{a.s.}} p_\infty.
\]
Thus, in a freeze-out deployment window, \emph{stabilization of governing-law conditionally correct forecasts} is a
structural route to long-run stabilization of empirical averages (frequency meaning \hyperref[f:F1]{(\ref*{f:F1})}).

This conclusion should not be over-interpreted.  The limit $p_\infty$ is a regime-conditional
quantity tied to the realized filtration and the deployed policy/model version.  Moreover, \hyperref[f:F1]{(\ref*{f:F1})}
stabilization alone does not guarantee \hyperref[f:F3]{(\ref*{f:F3})} calibration; calibration depends on the checking classes and repetition protocol and can fail under selection effects or distribution shift even if $\bar X_n$ stabilizes
(see \autoref{rem:vitl-vs-calibration}).  The example therefore illustrates the separation
principle in a constructive direction: evaluation-law conditional correctness plus a regime that yields
$p_t\to p_\infty$ produces stabilization, while stronger reliability claims require an explicit
evaluation protocol and reference structure.
\end{exmp}

\medskip
\begin{exmp}[Stationary but not ergodic: a latent-mixture regime yields a random limit]
\label{ex:nonergodic}
Let $Z\sim\mathrm{Bernoulli}(1/2)$ and fix two distinct success probabilities $p_0,p_1\in(0,1)$
(e.g., $p_0=0.2$ and $p_1=0.8$). Conditional on $Z=z$, let $(X_t)_{t\ge1}$ be i.i.d.\
$\mathrm{Bernoulli}(p_z)$:
\[
\mathbb{P}(X_t=1\mid Z=z)=p_z,\qquad t\ge1,
\]
and assume conditional independence across $t$ given $Z$.
Unconditionally, $(X_t)$ is strictly stationary (a mixture of i.i.d.\ product measures), but it is
not ergodic because the latent regime variable $Z$ is shift-invariant and generates a nontrivial invariant
$\sigma$-algebra $\mathcal{I}=\sigma(Z)$.

Let $\bar X_n:=n^{-1}\sum_{t=1}^n X_t$. By the strong law of large numbers applied conditional on
$Z$,
\[
\bar X_n \xrightarrow{\mathrm{a.s.}} p_Z,
\]
so the stabilized limit is \emph{regime-conditional} and takes the random values $p_0$ or $p_1$.
Moreover, since the sequence $(X_t)$ is conditionally i.i.d.\ given $Z$, we have
\[
\mathbb{E}[X_1\mid \mathcal{I}]
=\mathbb{E}[X_1\mid Z]
=p_Z,
\]
which matches the almost sure limit in Birkhoff's theorem for stationary (not necessarily ergodic)
processes. 
\end{exmp}

\medskip
\begin{exmp}[Bandit special case: stabilization is policy-indexed even in a fixed environment]
\label{ex:bandit}
Consider a two-armed bandit with rewards $(X_t)_{t\ge1}$ and actions $A_t\in\{0,1\}$. Assume the conditional reward mean is stable given the past and current action:
\[
\mathbb E[X_t\mid\mathcal F_{t-1},A_t]=\mu_{A_t}\qquad\text{a.s.},
\]
and $0\le X_t\le 1$ almost surely.  Fix a stationary randomized policy that pulls arm $1$ with probability
$q\in(0,1)$ at each round, independently of the past,
\[
\mathbb{P}(A_t=1\mid \mathcal{F}_{t-1})=q,\qquad t\ge1,
\]
where $\mathcal{F}_{t-1}$ is the realized history up to time $t-1$.

Let $\bar X_n:=n^{-1}\sum_{t=1}^n X_t$.  Under this regime, the reference drift in the unified
framework (Section~\ref{sec6.1-unified}) is
\[
m_t=\mathbb{E}[X_t\mid \mathcal{F}_{t-1}]
=\mathbb{E}\bigl[\mu_{A_t}\mid \mathcal{F}_{t-1}\bigr]
=q\,\mu_1+(1-q)\,\mu_0,
\]
a constant.  Hence Proposition~\ref{prop:unified-stabilization} yields
\[
\bar X_n \xrightarrow{\mathrm{a.s.}} q\,\mu_1+(1-q)\,\mu_0.
\]
The stabilized frequency is therefore policy-indexed, as changing the policy parameter $q$
changes the limiting average reward even though the environment (the arm means $\mu_0,\mu_1$) is
fixed.  This is the simplest instance of the regime dependence made explicit in
Proposition~\ref{prop:policy-stabilization}.
\end{exmp}

\medskip
\begin{exmp}[Adaptive A/B testing: drift stabilization is a design-level condition]
\label{ex:ab}
In an adaptive A/B test, the assignment probability $q_t$ may be updated as outcomes accrue, for example, to increase allocation to the better-performing arm, while inference remains grounded in the known experimental design. This setting includes response-adaptive randomization schemes that progressively tilt allocation toward the empirically superior arm. Two distinct consequences follow:

\smallskip

\noindent\emph{(i) Stabilization is a design-level condition.}
If the adaptive rule and potential-outcome sequence are such that the Ces\`aro drift stabilizes,
$\frac{1}{n}\sum_{t=1}^n m_t\to\theta$ almost surely, then
Proposition~\ref{prop:design-stabilization} yields $\bar X_n\to\theta$ almost surely.  In this
sense, stabilization of the observed average is guaranteed not by ``i.i.d.\ sampling'' but by an
explicit property of the design-induced drift.

\smallskip
\noindent\emph{(ii) Anytime-valid monitoring is design-based.}
The martingale-difference identity alone does not produce an observable test process because its centering term may involve unobserved potential outcomes. If an observable nonnegative supermartingale or e-process is proved valid under a specified null and the known randomization law, Ville's inequality yields time-uniform control. Consequently, optional stopping does not invalidate that particular design-based guarantee, precisely because its null, process, and randomization mechanism are stated explicitly.
\end{exmp}

\medskip
\begin{exmp}[LLM context truncation as a filtration mismatch]
\label{ex:mode1}
In a long conversation, a user may interpret the model’s output at time $t$ as conditioned on the
full transcript $\mathcal{F}_t$. In practice, an LLM may base its response on a truncated context window $W_t$. Let $\mathcal I_t:=\sigma(W_t)$ denote the model's effective information set. Because a sliding window can discard information, $(\mathcal I_t)$ need not be a filtration. A reported score $p_t$ then need not be interpretable as
$\mathbb{E}[X_{t+1}\mid\mathcal{F}_t]$ for the declared filtration, and apparent time-inconsistent
updates can result without any new information.
\end{exmp}

\medskip
\begin{exmp}[Coherence is not the same as a usable frequency guarantee]
\label{ex:mode2}
The RL feedback construction in \autoref{ex:coh-not-stable} is deliberately extreme but clarifies the diagnosis.
There, the forecasts are conditionally correct under the governing law (and are internally coherent if that law is also used as the assessment law), and the innovations satisfy validity in the large \eqref{eq:vitl}, yet the empirical
mean fails to converge because the predictable component (reference drift) does not stabilize.  This is a Mode~2
failure in which the one-step probabilistic object is well defined, but the intended frequency
meaning \hyperref[f:F1]{(\ref*{f:F1})} is not justified without additional regime assumptions that control drift.
\end{exmp}

\medskip
\begin{exmp}[Product support assistant: fluent troubleshooting text is not a fault-diagnosis model]
\label{ex:halluc-cite}
A manufacturer deploys an LLM-based support agent to help customers troubleshoot a smart thermostat.  Given a chat history
and a symptom description, the agent outputs a recommended fix $O_{t+1}$ (e.g., ``replace the C-wire'' or ``reset the
firmware'') along with a scalar confidence score $p_t$ derived from generation probabilities. An autoregressive distribution over strings does not by itself identify or supply a conditional probability that the recommendation is correct. Such a probability requires a jointly specified model containing the external truth or evaluation variables and the mapping from output to correctness. Customers and operators nevertheless interpret $p_t$ as the probability that the recommendation is \emph{correct} for the
device's true failure state.  Formally, they demand a probability for the event
\[
X_{t+1}:=\mathbf 1\{\text{$O_{t+1}$ is judged the correct fix under the true latent fault and service protocol}\}.
\]
This event is defined relative to the latent fault, sensor logs, and service rules. A coherent joint law could be constructed on an extended space containing these variables, but the text model alone does not provide it. Thus a high string-based score can be unrelated to $\E_{\mathbb P}[X_{t+1}\mid\mathcal F_t]$ for operational correctness.
\end{exmp}

\medskip
\begin{exmp}[Regime drift in an LLM-assisted medical triage pipeline]
\label{ex:feedback-queries}
A hospital deploys an LLM-assisted triage agent that recommends a disposition and outputs a score $\widetilde p_t$ intended to approximate the evaluation-law conditional mean $p_t^*:=\E_{\mathbb P}[X_{t+1}\mid\mathcal F_t]$, where
\[
X_{t+1}:=\mathbf 1\{\text{the disposition is judged correct by a clinician review}\}
\]
and $\mathcal F_t$ includes the questionnaire, vitals, retrieved guidelines, and prior actions. For the exact $p_t^*$, the innovations $X_{t+1}-p_t^*$ form a $\mathbb P$-martingale difference. For the approximate score,
\[
\frac1n\sum_{t=0}^{n-1}(X_{t+1}-\widetilde p_t)
=\frac1n\sum_{t=0}^{n-1}(X_{t+1}-p_t^*)
+\frac1n\sum_{t=0}^{n-1}(p_t^*-\widetilde p_t),
\]
so average approximation error must also be controlled. Even if $\widetilde p_t=p_t^*$ exactly, staff overrides, protocol revisions, seasonality, and model updates can change visitation of confidence bins and prevent raw within-bin frequencies from stabilizing. Internal coherence under a separate assessment law would not by itself imply the $\mathbb P$-martingale identity.
\end{exmp}

\medskip
\begin{exmp}[Ad click prediction: global unbiasedness without binwise calibration (C1)]
\label{ex:global-not-cal}
An online advertising platform reports a probability-like score $p_t\in\{0.1,0.9\}$ for the click indicator $X_{t+1}\in\{0,1\}$. The score is not assumed here to be the exact evaluation-law conditional mean $\E_{\mathbb P}[X_{t+1}\mid\mathcal F_t]$; the example illustrates how a global finite-window discrepancy can cancel while binwise discrepancies remain large. Across a week of traffic, suppose the platform serves an equal number of impressions with $p_t=0.1$ and $p_t=0.9$, and the realized click frequencies are $\hat{\mathbb{P}}(X_{t+1}=1\mid p_t=0.1)=0.2$ and $\hat{\mathbb{P}}(X_{t+1}=1\mid p_t=0.9)=0.8$ (e.g., the model is systematically underconfident on ``low'' cases and overconfident on ``high'' cases).
Then the global discrepancy cancels:
\[
\frac1n\sum_{t=1}^n (X_{t+1}-p_t)\approx \tfrac12(0.2-0.1)+\tfrac12(0.8-0.9)=0,
\]
so the aggregate discrepancy can be close to zero over this audit window, while fixed-bin reliability is poor on both bins. The low bin has frequency $0.2$ vs.\ mean forecast $0.1$, and the high bin has frequency $0.8$ vs.\ $0.9$.
This illustrates that a single global cancellation statement does not imply calibration, which requires binwise
(or otherwise conditional) discrepancies to vanish.
\end{exmp}

\medskip
\begin{exmp}[Fraud-risk scores: calibration depends on the checking class and the deployment regime]
\label{ex:selective}
A payment processor assigns each transaction a fraud-risk score $p_t$ and flags transactions when $p_t$ exceeds a moving
threshold; flagged transactions trigger extra verification, which reduces realized fraud among flagged cases.
The compliance team reports that ``the model is calibrated'' based on last month's retrospective sample.

In fact, the claim is incomplete in two ways.  First, it depends on the checking class $\mathcal C$. The score may be
well-calibrated in coarse probability bins overall, yet miscalibrated conditional on merchant category, country, or
verification pathway (subgroup checks are different elements of $\mathcal C$).  Second, it depends on the reference
structure $\mathfrak R$. Once the threshold policy, verification workflow, or model weights are updated, the joint process
$(p_t,X_{t+1})$ changes, so calibration statements from last month do not automatically transfer to the new regime.
A meaningful statement must specify both (i) the checking protocol (bins/subgroups/selection rules) and (ii) the regime
under which repetition is defined (e.g., a frozen model and fixed intervention policy over a declared audit window).
\end{exmp}

\medskip
\begin{exmp}[Matched marginals can hide wrong conditional structure]
\label{ex:synth-cond}
A state agency releases synthetic microdata for an unemployment-benefits program.
The real data law $\mathbb P$ includes covariates $X$ (age, industry, county), a treatment indicator $T$ (participation in
a job-training program), and an outcome $Y$ (reemployment within 12 weeks).
The agency verifies that $\mathbb Q$ matches the univariate marginals and all pairwise correlations under $\mathbb P$. In particular, for nondegenerate binary $T$ and $Y$, exact matching of their marginals and correlation determines their $2\times2$ joint distribution and therefore preserves the unadjusted mean contrast $\E[Y\mid T=1]-\E[Y\mid T=0]$.

The analyst instead studies the conditional associational contrast
\[
\Delta(x):=\E_{\mathbb P}[Y\mid T=1,X=x]-\E_{\mathbb P}[Y\mid T=0,X=x]
\]
and an adjusted summary $\Delta_w:=\int\Delta(x)\,w(dx)$ for a prespecified target covariate distribution $w$ supported on the common conditional support. The pairwise checks do not determine the higher-order conditional law $\mathbb P(Y\mid T,X)$. A generator may preserve every checked pairwise summary while smoothing a three-way or higher-order $T$--$X$--$Y$ interaction. The corresponding quantities under $\mathbb Q$ can therefore differ from those under $\mathbb P$, although the unadjusted $T$--$Y$ contrast is preserved. Causal interpretations would additionally require potential-outcome and identification assumptions not imposed here.
\end{exmp}

\medskip
\begin{exmp}[Cross-regime validity fails under retraining and feedback loops]
\label{ex:synth-drift}
A hospital system uses a proprietary synthetic-data service to share patient records with external researchers.
Each quarter, the vendor retrains the generator on the latest electronic health record data, so the synthetic law
$\mathbb Q^{(q)}$ changes over quarters $q$.
Researchers use each quarterly release to build and validate a risk model and report ``95\% coverage'' for confidence
intervals of a population-level estimand (e.g., 30-day readmission rate) computed on the synthetic data.

The coverage statement is a frequency claim whose meaning depends on the replication scheme. Under one conditional scheme, coverage is evaluated over repeated synthetic draws while holding fixed the training data, privacy mechanism, and fitted generator. Other schemes may also repeat the training sample or retraining procedure and therefore define different coverage claims. In the deployed example, none of these components remains fixed across quarters. Retraining changes $\mathbb Q^{(q)}$, and the hospital uses findings derived from the synthetic releases to modify care pathways, which in turn alters the real data law $\mathbb P$ collected next quarter. Thus the system operates in a coupled, adaptive regime where both $\mathbb P$ and $\mathbb Q^{(q)}$ drift. Without an explicit reference structure linking repetitions under $\mathbb Q$ to repetitions under $\mathbb P$ (e.g.,
a frozen generator over a declared audit window, or a design-based retraining protocol treated as part of $\mathfrak R$),
``coverage under $\mathbb Q$'' does not justify ``coverage for the real-world estimand under $\mathbb P$.'' 
\end{exmp}

\refstepcounter{section}
\section*{Appendix D. Representative Reference Structures and Checklist for Reporting Probabilistic Guarantees in Adaptive Regimes}
\phantomsection
\addcontentsline{toc}{section}{Appendix D. Representative Reference Structures and Checklist for Reporting Probabilistic Guarantees in Adaptive Regimes}
\label{appendix-D}

\subsection*{D.1 Reference Structures and Sufficient Stability Conditions}
\phantomsection
\label{appendix-D1}

\begin{table}[H]
\centering
\captionsetup{font=footnotesize}
\caption{Reference structures and sufficient stability conditions. A reference structure alone does not license the listed conclusion; the target- and procedure-specific assumptions in the final column are also required.}
\label{tab:ref-structures}
\footnotesize
\renewcommand{\arraystretch}{1.25}  

\begin{tabular}{@{}L{0.21\linewidth} L{0.37\linewidth} L{0.36\linewidth}@{}}
\toprule
Reference structure & What is held fixed / invariance & Typical frequency meaning supported \\
\midrule
Exchangeability & Invariance under finite permutations of observations & \hyperref[f:F1]{(\ref*{f:F1})} with latent limit; alignment of conditionally correct forecasts and frequencies \\
Stationary/ergodic regime & Shift invariance of the law; ergodicity strengthens to constant limits & \hyperref[f:F1]{(\ref*{f:F1})} time-average stabilization; regime-conditional limits \\
Fixed policy--environment & Policy $\pi$ and environment kernel $\mathsf P$ fixed; positive-Harris ergodicity & Long-run average-reward stabilization for that pair \\
Randomized design & Assignment mechanism and held-fixed potential outcomes specified & \hyperref[f:F2]{(\ref*{f:F2})} coverage or error control for a specified procedure proved valid under the design and its assumptions \\
Controlled drift / reference drift & Drift term (reference conditional mean) has convergent Ces\`aro averages & \hyperref[f:F1]{(\ref*{f:F1})} stabilization via the unified drift--innovation template \\
Ideal conditional-mean forecasts with fixed predictable bins & $p_{t-1}=\mathbb E_{\mathbb P}[X_t\mid\mathcal F_{t-1}]$; predeclared Borel bin and infinite visitation & \hyperref[f:F3]{(\ref*{f:F3})} fixed-bin calibration under the stated visit condition \\
\bottomrule
\end{tabular}
\normalsize
\end{table}

\subsection*{D.2 Checklist for Reporting Probabilistic Guarantees in Adaptive Regimes}
\phantomsection
\label{app:reporting-checklist}

For any probabilistic claim, especially one intended to support a frequency meaning
\hyperref[f:F1]{(\ref*{f:F1})}--\hyperref[f:F3]{(\ref*{f:F3})}, the following items should be stated
explicitly to make the claim auditable in adaptive regimes:

\begin{enumerate}
\item \textbf{Name the target event/statistic.} Specify $X_{t+1}$ (or $X_t$) and the evaluation rule
(e.g., factual correctness relative to a database; reward under a policy; coverage for an estimand).
\item \textbf{Declare the filtration.} State what information is included in $\mathcal{F}_t$
(prompts, retrieved documents, tool outputs, human feedback, selection/abstention indicators, model version).
\item \textbf{State the probabilistic object and law.} Clarify whether $p_t$ is an assessment-law conditional probability, an evaluation-law conditionally correct forecast, or a heuristic score; if the latter, label it as a score.
\item \textbf{Declare the frequency meaning.} Identify whether the claim concerns stabilization
\hyperref[f:F1]{(\ref*{f:F1})}, repeated-sampling or time-uniform guarantees
\hyperref[f:F2]{(\ref*{f:F2})}, or calibration
\hyperref[f:F3]{(\ref*{f:F3})}.  For calibration, state the checking class (bins/groups/rules).
\item \textbf{Declare the reference structure.} Specify what is held fixed under repetition (policy, environment,
design, stopping rule, model version, generator protocol) and what invariance/controlled evolution is assumed.
\item \textbf{Give the stability route.} Point to the theorem-level mechanism supporting the claim. Examples include exchangeability, predictive stabilization, shift-based stabilization, policy--environment ergodicity, and a specified design-based or time-uniform construction (\autoref{sec6-beyond-exch}).
\item \textbf{State failure modes.} Describe which violations would break the claim (drift, selection effects,
model updates, strategic environments, mismatch between modeled events and evaluated events).
\end{enumerate}

\medskip

These criteria do not eliminate judgment. In many deployments, the operative reference structure is partially observed
or contested, and some stability assumptions can only be defended approximately.  The point is that, in adaptive
regimes, probabilistic reliability is not a property of ``the model'' alone.  It is a property of a coupled system
and therefore must be reported as such.

\refstepcounter{section}
\section*{Appendix E. Applied Illustrations Under a Declared Reference Structure}
\phantomsection
\addcontentsline{toc}{section}{Appendix E. Applied Illustrations Under a Declared Reference Structure}
\label{appendix-E}

This appendix develops two complementary illustrations of the Role Separation Principle in a
closed-loop recommendation system.  The first illustration focuses on global stabilization and policy-value
estimation, showing how frequency claims of types \hyperref[f:F1]{(\ref*{f:F1})}–\hyperref[f:F2]{(\ref*{f:F2})}
become meaningful only under an explicitly declared repetition regime.
The second illustration isolates fixed-bin calibration as an \hyperref[f:F3]{(\ref*{f:F3})} statement, emphasizing that its validity depends on regime-level visitation conditions.

The audit cohort is not proposed as a novel evaluation mechanism. The purpose of the illustrations is structural and semantic: familiar logging and frozen-model devices are interpreted as components of an explicit reference regime.

In much of the contextual bandit literature, logging policies and frozen models are introduced primarily as devices for unbiased estimation. Our illustrations make explicit that stabilization and calibration are not properties of the forecasting model alone; they are properties of the declared regime linking the filtration of interpretation to the structure of repetition.

\subsection*{E.1 Global Stabilization and Policy Value Under an Explicit Audit Regime}
\phantomsection
\label{sec:applied-E1}

\noindent \textbf{System and filtration.}
We consider a recommendation platform modeled as a contextual bandit,
i.e., a sequential decision problem in which, at each time $t$,
an action is chosen based on observed covariates and a single stochastic
outcome is observed.

At time $t$, let
\begin{itemize}
\item $U_t$ denote the observed context (covariates describing the user and situation);
\item $C_t$ denote the finite candidate set of available items;
\item $A_t \in C_t$ denote the chosen action (displayed item);
\item $X_t \in [0,1]$ denote the observed reward (e.g., click indicator).
\end{itemize}
The platform deploys an adaptive production policy whose internal state $\theta_t$ is updated using past interactions. Thus $\theta_t$ is part of the evolving information.

Let the post-outcome history be
\[
\mathcal F_t
=
\sigma(U_1,C_1,A_1,X_1,\theta_1,\ldots,U_t,C_t,A_t,X_t,\theta_t).
\]
Define the preassignment and preoutcome information sets
\[
\mathcal H_t:=\mathcal F_{t-1}\vee\sigma(U_t,C_t),
\qquad
\mathcal G_t:=\mathcal H_t\vee\sigma(A_t).
\]
The chronological nesting is $\mathcal F_{t-1}\subseteq\mathcal H_t\subseteq\mathcal G_t\subseteq\mathcal F_t$. Under a governing law, define the conditionally correct preoutcome forecast
\[
p_{t-1}
:=
\mathbb E[X_t\mid \mathcal G_t].
\]
Then $\mathbb E[X_t-p_{t-1}\mid\mathcal G_t]=0$. Because $X_t-p_{t-1}$ is $\mathcal F_t$-measurable and $\mathcal F_{t-1}\subseteq\mathcal G_t$, the tower property also gives $\mathbb E[X_t-p_{t-1}\mid\mathcal F_{t-1}]=0$; hence the partial sums form a martingale with respect to the nested post-outcome filtration $(\mathcal F_t)$.

\noindent \textbf{Innovation–drift decomposition.}
The click-through rate satisfies
\begin{equation}
\label{eq:E1-decomp}
\bar X_n
=
\frac1n\sum_{t=1}^n p_{t-1}
+
\frac1n\sum_{t=1}^n (X_t-p_{t-1}).
\end{equation}
Martingale laws control the innovation term.
Stabilization of $\bar X_n$ therefore depends on the preoutcome conditional-mean component,
which is regime-level.
In the production loop, $(U_t,C_t,A_t)$ is policy-dependent and
$\theta_t$ evolves, so the preoutcome conditional-mean component may drift.
Conditional-mean correctness alone does not ensure stabilization of global performance metrics.

\noindent \textbf{Declared reference structure: audit cohort.}
To make policy-value and click-through-rate statements frequency-meaningful, the platform declares an infinite audit sequence governed by a law $\mathbb P_{\mathfrak R}$, with expectation $\mathbb E_{\mathfrak R}$; $n$ denotes an increasing reporting horizon:

\begin{quote}
\textbf{Audit reference structure $\mathfrak R$.}
The logging policy $\nu(\cdot\mid U,C)$ and audit model $\theta^{\mathrm{audit}}$ are fixed. For each action $a\in C_t$, let $X_t(a)\in[0,1]$ be the potential reward. Assume: (i) consistency and no cross-interaction interference, $X_t=X_t(A_t)$; (ii) logging randomization,
\[
\mathbb P_{\mathfrak R}\!\left(A_t=a\mid\mathcal H_t,\{X_t(b):b\in C_t\}\right)
=\nu(a\mid U_t,C_t);
\]
(iii) conditional-mean stability for one measurable function $\mu$,
\[
\mathbb E_{\mathfrak R}[X_t(a)\mid\mathcal H_t]=\mu(U_t,C_t,a);
\]
(iv) positivity with uniformly bounded importance weights,
\[
\sup_{t,a:\,\pi(a\mid U_t,C_t)>0}
\frac{\pi(a\mid U_t,C_t)}{\nu(a\mid U_t,C_t)}\le W<\infty;
\]
and (v) $(U_t,C_t)_{t\ge1}$ is stationary and ergodic under $\mathbb P_{\mathfrak R}$.
\end{quote}

These are modeling assumptions.
They need not hold automatically in deployment and may fail under
inventory shifts, seasonality, or cross-cohort interference.
The Principle does not assert that $\mathfrak R$ is true;
it requires that such a regime be declared when frequency claims are made.

\noindent \textbf{Policy value under $\mathfrak R$.}
Let $\pi$ be a target policy with support contained in that of $\nu$
(i.e., $\pi\ll\nu$).
Define
\[
v_\pi(u,c):=\sum_{a\in c}\pi(a\mid u,c)\mu(u,c,a),
\qquad
V_{\mathfrak R}(\pi):=\mathbb E_{\mathfrak R}[v_\pi(U,C)].
\]
Let
\[
Y_t:=\frac{\pi(A_t\mid U_t,C_t)}{\nu(A_t\mid U_t,C_t)}X_t,
\qquad
\widehat V_n(\pi):=\frac1n\sum_{t=1}^nY_t.
\]
The assumptions give
\[
\mathbb E_{\mathfrak R}[Y_t\mid\mathcal H_t]=v_\pi(U_t,C_t).
\]
Consequently,
\[
\widehat V_n(\pi)-V_{\mathfrak R}(\pi)
=\frac1n\sum_{t=1}^n\{Y_t-v_\pi(U_t,C_t)\}
+\left\{\frac1n\sum_{t=1}^nv_\pi(U_t,C_t)-V_{\mathfrak R}(\pi)\right\}.
\]
The first term converges almost surely to zero by the bounded martingale strong law; the second converges almost surely to zero by the ergodic theorem. Hence
\[
\widehat V_n(\pi)\xrightarrow{\mathrm{a.s.}}V_{\mathfrak R}(\pi).
\]
Separate time-uniform inference would require an observable nonnegative supermartingale or e-process valid under a specified null and additional measurability and moment or conditional-MGF conditions.

This stabilization result holds for the audit regime.
Generalizing to the evolving production system requires additional
assumptions linking production and audit environments.
The audit cohort thus defines a distinct repetition regime
under which frequency claims are licensed.

\subsection*{E.2 Fixed-Bin Calibration as a Regime-Conditional Guarantee}
\phantomsection
\label{sec:applied-E2}

We continue with the recommender system introduced in the first illustration and turn specifically to the calibration question.

Under $\mathbb P_{\mathfrak R}$, let $p_{t-1}:=\mathbb E_{\mathfrak R}[X_t\mid\mathcal G_t]$. Fix a Borel bin $I\subset[0,1]$ and define
\[
W_t := \mathbf 1\{p_{t-1}\in I\},
\qquad
N_n(I)=\sum_{t=1}^n W_t.
\]
Let
\[
\widehat{x}_n(I)
=
\frac{1}{N_n(I)}\sum_{t=1}^n W_t X_t,
\qquad
\widehat{p}_n(I)
=
\frac{1}{N_n(I)}\sum_{t=1}^n W_t p_{t-1}.
\]
Then
\[
\widehat{x}_n(I)-\widehat{p}_n(I)
=
\frac{\sum_{t=1}^n W_t(X_t-p_{t-1})}{\sum_{t=1}^n W_t}.
\]

\noindent \textbf{Filtration-level result.}
Set $\mathcal K_t:=\mathcal G_{t+1}$. Then $(\mathcal K_t)$ is nested, $X_t$ is $\mathcal K_t$-measurable, and $p_{t-1}=\mathbb E_{\mathfrak R}[X_t\mid\mathcal K_{t-1}]$. By \autoref{prop:fixed-bin-calibration},
\[
\widehat{x}_n(I)-\widehat{p}_n(I)
\;\xrightarrow{\mathrm{a.s.}}\;0
\quad
\text{on the event } \{N_n(I)\to\infty\}.
\]
Conditional-mean correctness under $\mathbb P_{\mathfrak R}$ therefore yields a fixed-bin calibration guarantee
conditional on infinite visitation.

\noindent \textbf{Production obstruction.}
In the adaptive production loop, visitation is policy-dependent.
Forecast distributions may drift or concentrate as $\theta_t$ evolves,
so some bins may be visited only finitely often.
Conditional-mean correctness does not ensure $\{N_n(I)\to\infty\}$.
Calibration can thus fail to admit a stable frequency interpretation
even when forecasts are conditionally correct under the governing law (and internally coherent when the assessment and governing laws agree).

\noindent \textbf{Audit-regime restoration.}
Under the declared audit law $\mathbb P_{\mathfrak R}$, assume $(p_{t-1})_{t\ge1}$ is stationary and ergodic and
\[
q_I
:=
\mathbb P_{\mathfrak R}(p_{t-1}\in I)>0.
\]
Birkhoff's theorem gives
\[
\frac{N_n(I)}{n}
=\frac1n\sum_{t=1}^n\mathbf1\{p_{t-1}\in I\}
\xrightarrow{\mathrm{a.s.}}q_I>0,
\]
so $N_n(I)\to\infty$.
Combining this with \autoref{prop:fixed-bin-calibration}
yields a stable calibration conclusion within the audit cohort.

This statement is explicitly regime-conditional.
It does not assert that production calibration stabilizes;
it asserts that calibration is interpretable under the declared
repetition structure.

\par
}
\clearpage 
\end{singlespace}
\end{document}